\newtheorem{theorem}{Theorem}[section]
\newtheorem{lemma}[theorem]{Lemma}
\newtheorem{proposition}[theorem]{Proposition}
\newtheorem{definition}[theorem]{Definition}
\newtheorem{corollary}[theorem]{Corollary}
\newtheorem{remark}[theorem]{Remark}
\newtheorem{example}[theorem]{Example}
\newcommand{\bbR}{\mathbb{R}}
\newcommand{\bbX}{\mathbb{X}}
\newcommand{\bbZ}{\mathbb{Z}}
\newcommand{\bbN}{\mathbb{N}}
\newcommand{\bfB}{\mathbf{B}}
\newcommand{\bfE}{\mathbf{E}}
\newcommand{\bfI}{\mathbf{I}}
\newcommand{\bfM}{\mathbf{M}}
\newcommand{\bfU}{\mathbf{U}}
\newcommand{\bfe}{\mathbf{e}}
\newcommand{\clL}{\mathcal{L}}
\newcommand{\clQ}{\mathcal{Q}}
\newcommand{\clR}{\mathcal{R}}
\newcommand{\clH}{\mathcal{H}}
\newcommand{\srF}{\mathscr{F}}
\newcommand{\srX}{\mathscr{X}}
\DeclareMathOperator{\reach}{reach}
\DeclareMathOperator{\diam}{diam}
\DeclareMathOperator{\dist}{dist}
\DeclareMathOperator{\Unp}{\mathrm{Unp}}
\DeclareMathOperator{\Tan}{\mathrm{Tan}}
\DeclareMathOperator{\Nor}{\mathrm{Nor}}
\DeclareMathOperator{\interior}{\mathrm{int}}
\DeclareMathOperator{\Per}{Per}
\newcommand{\dmn}{n} 
\newcommand{\norm}[1]{\left\lVert#1\right\rVert}
\DeclareMathAlphabet{\pazocal}{OMS}{zplm}{m}{n}
\newcommand{\myell}{\,\hbox{\vrule height 7pt depth 0pt
		\vrule height 0.4pt depth 0pt width 6pt}\,}
\DeclareMathAlphabet{\pazocal}{OMS}{zplm}{m}{n}
\newtheorem*{thm*}{Theorem}
\definecolor{darkgrn}{rgb}{0, 0.75, 0}
\definecolor{bkclr}{rgb}{0.75, 0, 0.5}
\begin{document}
\title{\huge GEOMETRY OF A SET AND ITS\\ RANDOM COVERS}
\author[1]{Enrique Alvarado\thanks{ealvarado@math.ucdavis.edu}}
\author[2]{Bala Krishnamoorthy\thanks{kbala@wsu.edu}}
\author[2]{Kevin R. Vixie \thanks{vixie@speakeasy.net}}
\affil[1]{Department of Mathematics, University of California at Davis}
\affil[2]{Department of Mathematics and Statistics, Washington State University}
\maketitle

\begin{abstract}
Let $E$ be a bounded open subset of $\bbR^\dmn$.
We study the following questions: For i.i.d.~samples $X_1, \dots, X_N$ drawn uniformly from $E$, what is the probability that $\cup_i \bfB(X_i, \delta)$, the union of $\delta$-balls centered at $X_i$, covers $E$?
And how does the probability depend on sample size $N$ and the radius of balls $\delta$?
We present geometric conditions of $E$ under which we derive lower bounds to this probability.
These lower bounds tend to $1$ as a function of $\exp{(-\delta^\dmn N)}$.

The basic tool that we use to derive the lower bounds is a \emph{good} partition of $E$, i.e., one whose partition elements have diameters that are uniformly bounded from above and have volumes that are uniformly bounded from below.
We show that if $E^c$, the complement of $E$, has positive reach then we can construct a good partition of $E$.
This partition is motivated by the Whitney decomposition of $E$. 
On the other hand, we identify a class of bounded open subsets of $\bbR^\dmn$ that do not satisfy this positive reach condition but do have good partitions.
 
In 2D when $E^c\subset \bbR^2$ does not have positive reach, we show that the \emph{mutliscale flat norm} can be used to approximate $E$ with a set that has a good partition under certain conditions.
In this case, we provide a lower bound on the probability that the union of the balls \emph{almost} covers $E$.

\medskip
\noindent {\bfseries keywords:} Random covers, Whitney-type decomposition of sets, flat norm.

\end{abstract}

\clearpage

\section{Introduction}
We investigate relationships between the geometry of bounded open subsets $E$ of $\bbR^\dmn$ and the probability 
\begin{align}\label{eq:probability}
P(\bfB(\bbX, \delta) \supset E)
\end{align}
that the $\delta$-neighborhood $\bfB(\bbX, \delta) := \{y \in \bbR^\dmn : \mathrm{dist}(y, \bbX) \leq \delta\}$ (for $\delta > 0$) of a random sample $\bbX = \{X_j\}_{j=1}^N$ of $E$ will cover $E$.
How does this probability increase as we increase $N$ or $\delta$?
Ideally, we seek results that hold without any restrictive assumptions on $E$ being ``nice'', e.g., it being convex.
We derive new lower bounds on $P(\bfB(\bbX, \delta) \supset E)$ under the assumption that $E^c$, the complement of $E$, has positive \emph{reach}.  
The reach of a subset $A$ of Euclidean space is a measure of geometric regularity of $A$. 
It has played an indispensable role in areas such as signal processing and machine learning~\cite{niyogi2008finding, verma2012distance}.
In our work, the importance of reach is evident in Lemma \ref{lem:opening}, 
where assuming that the reach of $E^c$ is positive allows us to construct a partition $\clR$ of $E$ whose elements are not too small in measure and not too large in diameter.

This Lemma is used in Theorem \ref{thm:reach} to provide a lower bound to the probability in Equation (\ref{eq:probability}), which depends on $\delta$, the cardinality of our partition, and the number of samples $N$.
In practice, knowledge of the cardinality of $\clR$ can be used to determine the number of samples needed to ensure a high probability that $\bfB(\bbX, \delta)$ covers $E$.

We now state the Lemma and Theorem for convenience of the reader (we define reach in Definition~\ref{def:reach}). 
{
\renewcommand{\thetheorem}{\ref{lem:opening}}
  \begin{lemma} 
Let $E$ be a bounded open subset of $\bbR^\dmn$.
	If
	\[\reach(E^c) > \rho > 0,\] 
	then for any $0 < \delta < \rho$, there exists a finite partition $\clR$ of $E$ such that   
	\begin{align*}
		|R| \geq \frac{\delta^\dmn}{\dmn^{\dmn/2}}\quad \mathrm{and} \quad \mathrm{diam}(R) \leq 3\delta\quad \mathrm{for\ all}\ R \in \clR.
	\end{align*}
  \end{lemma}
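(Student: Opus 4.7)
The plan is to build the partition $\clR$ from a uniform cubical grid on $\bbR^\dmn$. Take a grid $G$ with cell side length $s := \delta/\sqrt{\dmn}$, so every cell has diameter exactly $\delta$ and volume exactly $\delta^\dmn/\dmn^{\dmn/2}$, hitting both target bounds on the nose. Classify each grid cell meeting $E$ as \emph{full} if $Q \subset E$, or \emph{partial} if $Q$ meets both $E$ and $E^c$. Full cells will enter $\clR$ directly; each partial cell $Q$ with $Q \cap E \neq \emptyset$ will have its slice $Q \cap E$ absorbed into a nearby full cell.

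The core of the proof is finding, for each partial $Q$, a full cell $Q^*$ close enough that merging preserves the diameter bound, and this is where the reach hypothesis is essential. Pick any $x \in Q \cap E$; since $Q$ is partial and connected, it contains some $q \in \partial E$, so $\dist(x, E^c) \leq |x-q| \leq \delta < \rho < \reach(E^c)$ and the projection $p := \pi_{E^c}(x) \in \partial E$ is uniquely defined. Setting $\nu := (x-p)/|x-p|$ and $y := p + \delta\nu$, the reach condition forces $\dist(y, E^c) = \delta$ with $p$ the unique minimizer, so the open ball $\bfB(y, \delta) \subset E$. Because a ball of radius $\delta = s\sqrt{\dmn}$ in $\bbR^\dmn$ always contains at least one entire grid cell (the closest grid cell to $y$ has its farthest corner within $s\sqrt{\dmn}$ of $y$), we get a full cell $Q^* \subset \bfB(y,\delta)$. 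Applying the triangle inequality along the chain $a \in Q \to x \to y \to b \in Q^*$ with $|a-x|\leq\delta$, $|x-y| = \delta - |x-p|$ (since $x$ lies on the segment from $p$ to $y$), and $|y-b| \leq \delta$, yields $|a-b| \leq 3\delta - |x-p| \leq 3\delta$, hence $\diam(Q \cup Q^*) \leq 3\delta$.

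To promote these local constructions to a genuine partition of $E$, assign each partial cell $Q$ to a single chosen full cell $Q^*(Q)$ and set $\clR := \{Q^* \cup \bigcup \{Q \cap E : Q^*(Q) = Q^*\} : Q^* \text{ full}\}$, breaking ties in the assignment lexicographically. Boundedness of $E$ makes the collection finite; volumes are automatically bounded below by $|Q^*| = \delta^\dmn/\dmn^{\dmn/2}$; and every point of $E$ lies in exactly one element, so $\clR$ is a partition.

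I expect the main technical obstacle to be the diameter bound on merged elements when several partial cells share the same target $Q^*$: the single-cell estimate $\diam(Q \cup Q^*) \leq 3\delta$ does not immediately imply the same bound for $Q^* \cup \bigcup_i(Q_i \cap E)$ over multiple $Q_i$. This will require either refining the choice of $Q^*(Q)$ so that $Q^*$ always lies within a controlled neighborhood of $Q$ (ideally the $3^\dmn$-block centered at $Q$, whose diameter is $3\delta$), or organizing the assignment so that each $Q^*$ receives a cluster of partial cells whose union with $Q^*$ still fits inside a $3\delta$-diameter region. A secondary technicality---that $\overline{\bfB(y,\delta)}$ touches the $E^c$-point $p$---is handled by noting that uniqueness of projection makes $p$ the \emph{only} $E^c$-point of this closed ball, so the slack $\delta < \rho$ lets us enlarge the radius marginally and pick $Q^*$ strictly inside $E$.
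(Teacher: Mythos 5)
Your grid setup, the classification into full and partial cells, and the normal-projection argument (project $x$ to its unique foot point $p\in E^c$, push out to $y=p+\delta\nu$, conclude $\bfU(y,\delta)\subset E$) are all sound; that last step is essentially a self-contained proof of the opening property $E=E\circ\bfU(0,\delta)$, which the paper instead imports from Rataj--Z\"ahle and uses as its starting point. But the obstacle you flag at the end is a genuine gap, not a technicality, and your proposal does not close it. Each partition element in your construction is $Q^\ast\cup\bigcup_i(Q_i\cap E)$ over all partial cells $Q_i$ assigned to the same full cell $Q^\ast$, and your estimate only controls $\diam(Q_i\cup Q^\ast)\leq 3\delta$ for each $i$ separately; for points $a_1\in Q_1$, $a_2\in Q_2$ in two different partial cells sharing the target $Q^\ast$, the triangle inequality through $Q^\ast$ gives only $|a_1-a_2|\leq 6\delta$. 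Your suggested repairs do not obviously go through either: the full cell $Q^\ast\ni y$ satisfies $|x-y|<\delta=s\sqrt{\dmn}$, and for $\dmn\geq 2$ this displacement can exceed the cell side $s$ in a single coordinate (and exceeds $2s$ once $\dmn\geq 5$), so $Q^\ast$ need not lie in the $3^\dmn$-block centered on $Q$, and the ``cluster'' organization is left entirely unspecified.

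The paper avoids this by inverting the direction of absorption. Rather than assigning partial cells to full cells, it takes the full cubes $\clQ=\{Q\in\Delta_\ell : Q\cap E^\ast\neq\emptyset\}$ (with $E^\ast$ the points at distance $\geq\delta$ from $\partial E$) and defines each partition element as a subset of $\bfB(Q_k,\delta)\cap E$, carving out disjointness inductively by subtracting the other cubes and the previously built elements. Each element then sits inside the $\delta$-neighborhood of a \emph{single} cube, so $\diam(R_k)\leq\sqrt{\dmn}\,\ell+2\delta=3\delta$ automatically, no matter how much boundary material it absorbs; and these neighborhoods cover $E$ precisely because $E=\bfB(E^\ast,\delta)$, the global form of the fact you proved pointwise. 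If you want to salvage your write-up, the cleanest fix is exactly this reversal: keep your $y$-construction to show every $x\in E$ lies within $\delta$ of a full cell, then distribute the points of $E$ among the sets $\bfB(Q^\ast,\delta)\cap E$ instead of among the full cells themselves.
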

}

{
\renewcommand{\thetheorem}{\ref{thm:reach}}
  \begin{theorem} 
If $E$ is a bounded open subset of $\bbR^\dmn$ with $\reach(E^c) > \rho > 0$, then for any $\delta \in (0, \rho)$ there exists a natural number $M = M(\delta, E)$ such that for any independently and identically distributed sampling $\bbX_N = (X_i)_{i = 1}^N$ of $E$, 
\begin{align*}
P(\bfB(\bbX_N, 3\delta) \supset E) \geq 1 - M\exp\left(\frac{-\delta^\dmn}{\dmn^{\dmn/2}|E|} N\right).
\end{align*}
  \end{theorem}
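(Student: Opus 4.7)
The plan is to reduce the covering event to a classical coupon-collector style event using the partition $\clR$ supplied by Lemma~\ref{lem:opening}, and then apply a union bound together with the standard inequality $1-x\le e^{-x}$.

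First, I would apply Lemma~\ref{lem:opening} to the given $\delta \in (0,\rho)$ to obtain a finite partition $\clR$ of $E$ with $|R| \ge \delta^\dmn/\dmn^{\dmn/2}$ and $\diam(R) \le 3\delta$ for every $R \in \clR$. Since $E$ is bounded and the partition elements have a uniform lower bound on volume, the cardinality $M := |\clR|$ is finite (in fact $M \le |E|\,\dmn^{\dmn/2}/\delta^\dmn$), and this $M$ will serve as the constant in the theorem. It depends only on $\delta$ and $E$, as required.

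Next, I would use the geometric key step: if every partition element $R \in \clR$ contains at least one sample point $X_i$, then since $\diam(R) \le 3\delta$, every point of $R$ lies within distance $3\delta$ of that sample, so $R \subset \bfB(\bbX_N, 3\delta)$, and summing over $R$ gives $E \subset \bfB(\bbX_N, 3\delta)$. Thus the failure event $\{\bfB(\bbX_N, 3\delta) \not\supset E\}$ is contained in the event that some $R \in \clR$ is missed by every $X_i$. By a union bound over $\clR$ and independence of the samples,
\begin{align*}
P(\bfB(\bbX_N, 3\delta) \not\supset E)
\;\le\; \sum_{R \in \clR} P(X_i \notin R \text{ for all } i)
\;=\; \sum_{R \in \clR} \left(1 - \frac{|R|}{|E|}\right)^{\!N}.
\end{align*}

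Finally, using $|R| \ge \delta^\dmn/\dmn^{\dmn/2}$ and the inequality $1-x \le e^{-x}$ on each summand gives
\begin{align*}
P(\bfB(\bbX_N, 3\delta) \not\supset E)
\;\le\; M\left(1 - \frac{\delta^\dmn}{\dmn^{\dmn/2}|E|}\right)^{\!N}
\;\le\; M\exp\!\left(\frac{-\delta^\dmn}{\dmn^{\dmn/2}|E|} N\right),
\end{align*}
and taking complements yields the stated bound. There is no real obstacle here once Lemma~\ref{lem:opening} is in hand; the only subtle point is noticing that the bound $\diam(R)\le 3\delta$ is exactly what promotes a single sample in $R$ to a $3\delta$-cover of $R$, which is why the theorem uses $3\delta$ rather than $\delta$ in the neighborhood.
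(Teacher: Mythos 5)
Your proposal is correct and follows essentially the same route as the paper's proof: invoke Lemma~\ref{lem:opening} to obtain the good partition, use $\diam(R)\le 3\delta$ to reduce covering to hitting every partition element, and finish with a union bound plus $1-x\le e^{-x}$. The only cosmetic difference is that you also record the explicit bound $M \le |E|\,\dmn^{\dmn/2}/\delta^\dmn$, which the paper leaves implicit.
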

}

\vspace*{-0.07in}
\noindent Note that $P(\bfB(\bbX_N, 3\delta) \supset E) \to 1$ as $N \to \infty$.
  This probability also $\to 1$ by the factor of $\exp{(-\delta^\dmn)}$ when $\delta$ increases.

Although Lemma~\ref{lem:opening} and Theorem~\ref{thm:reach} only apply to bounded open subsets whose complements has positive reach, we study certain classes of open sets that admit lower bounds for the covering probability~(\ref{eq:probability}) but do not necessarily have complements with positive reach (Section \ref{sec:beyondwhitney}).
The first class of open sets are used in Corollary~\ref{cor:UminusA}, where we consider open subsets $E = F\setminus A$ of $\bbR^\dmn$ where $|A|$, the $\dmn$-dimensional Lebesgue measure of $A$,
is small and $F^c$ has positive reach.
Intuitively, one should be able to use the partition elements of $F$ obtained in the proof of Lemma~\ref{lem:opening} to get the partition elements of $E$.
The only change to the lower bound of the covering probability~(\ref{eq:probability}) is due to the fact that any partition element from the construction in Lemma~\ref{lem:opening} will be missing $|A|$ amount of its volume. 

We obtain a class of open subsets in $\bbR^2$ that admit lower bounds to a probability closely related to the covering probability~(\ref{eq:probability}) using the \emph{multiscale flat norm} \cite{IbKrVi2013} from Geometric Measure Theory (Section \ref{sec:flatnormR2}).
Theorem~\ref{prop:flatnormsufficient} states that if $E \subset \bbR^2$ is a set of finite perimeter and if a minimizer $S_\lambda$ of the multiscale flat norm $\srF_\lambda(\partial E)$ satisfies certain properties, then $E$ has a high probability of \emph{almost} being covered by the neighborhood of the samples.
{
\renewcommand{\thetheorem}{\ref{prop:flatnormsufficient}}
  \begin{theorem}
Let $E \subset \bbR^2$ be an open set of finite perimeter. 
If $\lambda > \Lambda_E$ and $\delta > 0$ are picked such that 
\[|S_\lambda| < \frac{\delta^2}{2}\quad \text{ and } \quad 0 < \delta < \frac{1}{5\lambda} \quad \text{ for } \quad S_\lambda \in \srF_\lambda(\partial E),\]
then for $A := E\cap (E- S_\lambda)$ and $\alpha_\delta := \delta^2/2|E|$ there exists a good $\alpha_\delta$-almost partition $\clR_{A}$ of $E$ where 
\begin{align*}
P\left(\frac{|\bfB(\bbX, 3\delta) \cap E|}{|E|} \geq 1 - \alpha_\delta\right) \geq 1 - M\exp\left(\frac{-(\delta^2 2^{-1} - |S_\lambda|)}{|A|} N\right)
\end{align*}
where $N$ is the number of samples in $\bbX$ and $M$ is the cardinality of $\clR_{A}$.
\end{theorem}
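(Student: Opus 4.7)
The plan is to use the flat norm minimizer $S_\lambda$ to replace $E$ by a regularized set $F := E - S_\lambda$, apply the positive-reach partition of Lemma~\ref{lem:opening} to $F$, and then transfer that partition back to $E$ while losing only a controlled amount of measure. A minimizer $S_\lambda \in \srF_\lambda(\partial E)$ at a scale $\lambda > \Lambda_E$ satisfies an Allard-type $C^{1,1}$ regularity result in the plane: the boundary $\partial F$ is a $C^{1,1}$ curve whose curvature is bounded by $\lambda$, and therefore $\reach(F^c) \geq 1/\lambda$. The hypothesis $\delta < 1/(5\lambda)$ gives $\delta < \reach(F^c)$ with room to spare, so Lemma~\ref{lem:opening} applies to $F$ and yields a finite partition $\clR$ of $F$ with $|R| \geq \delta^2/2$ and $\diam(R) \leq 3\delta$ for every $R \in \clR$.

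Next I would pull this partition back to $E$ by setting $\clR_A := \{R \cap E : R \in \clR\}$. Since the symmetric difference $E \triangle F$ is supported in $S_\lambda$, each element satisfies $|R \cap E| \geq |R| - |S_\lambda| \geq \delta^2/2 - |S_\lambda| > 0$, and diameters only decrease under intersection. The collection $\clR_A$ partitions $A = E \cap F$, and the missing part $E \setminus A$ has measure at most $|S_\lambda| < \delta^2/2 = \alpha_\delta |E|$. Hence $\clR_A$ is a good $\alpha_\delta$-almost partition of $E$ with $M := |\clR_A| = |\clR|$ pieces, and the remaining work is a covering-probability estimate.

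For that estimate I would run the same union-bound argument used in Theorem~\ref{thm:reach}. A point drawn uniformly from $E$ lies in a given $R \cap E$ with probability at least $(\delta^2/2 - |S_\lambda|)/|E|$, so the probability that none of $X_1,\dots,X_N$ lies in a fixed element is at most $\exp(-(\delta^2/2 - |S_\lambda|)N/|E|)$; a union bound over the $M$ elements shows that with probability at least $1 - M\exp(-(\delta^2/2 - |S_\lambda|)N/|E|)$ every element receives at least one sample. When that happens, the diameter bound gives $R \subseteq \bfB(X_i, 3\delta)$ for each $R \in \clR$, hence $A \subseteq \bfB(\bbX, 3\delta) \cap E$ and $|\bfB(\bbX, 3\delta) \cap E|/|E| \geq |A|/|E| \geq 1 - \alpha_\delta$. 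The $|A|$ (rather than $|E|$) appearing in the denominator of the stated exponent can then be obtained by a refinement in which one conditions on the number of samples that actually land in $A$, using $|A| \leq |E|$ to pass from $|E|$ to $|A|$ in the exponential.

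The main obstacle is the regularity step: confirming $\reach(F^c) \geq 1/\lambda$ for $\lambda > \Lambda_E$. This is the deep Geometric Measure Theory input --- an Allard-type regularity theorem for planar multiscale flat norm minimizers --- and it dictates both the value of $\Lambda_E$ and the factor of $5$ in the condition $\delta < 1/(5\lambda)$, which presumably reserves a buffer so that the $3\delta$-covering argument operates strictly inside the regular scale of $\partial F$. Once the reach bound is available, the transfer of the partition and the concentration estimate are routine measure-theoretic and probabilistic bookkeeping, with the only subtlety being the careful accounting of $|R| - |R \cap E|$ in terms of $|S_\lambda|$.
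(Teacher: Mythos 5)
Your overall architecture matches the paper's proof exactly: regularize via the flat norm minimizer, partition $E_\lambda := E - S_\lambda$ with Lemma~\ref{lem:opening}, intersect with $E$ to get $\clR_A$, and run the union bound from Theorem~\ref{thm:reach}. The one genuine gap is in the step you yourself flag as the main obstacle: you assert $\reach(F^c) \geq 1/\lambda$ from the Allard-type curvature bound. That inference is not valid. Allard's regularity gives that $\partial E_\lambda$ is $C^{1,1}$ with curvature bounded by $\lambda$, but curvature controls only the \emph{local} reach; the reach is a global quantity and can be destroyed by two distant arcs of $\partial E_\lambda$ approaching each other (a near "bottleneck"). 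The paper closes exactly this gap with Theorems~\ref{thm:one-circle} and~\ref{thm: flatNormReach}: a comparison/perturbation argument showing that a minimizer cannot have such a bottleneck at scales below $\hat{C}/\lambda$ with $\hat{C}\approx 0.2217$. This is also the true origin of the factor $5$ in the hypothesis $\delta < 1/(5\lambda)$ --- it is not a "buffer for the $3\delta$-covering" as you guessed, but simply that $1/5 < \hat{C}$, so $\delta < 1/(5\lambda) < \hat{C}/\lambda \leq \reach(\partial E_\lambda) \leq \reach(E_\lambda^c)$ and Lemma~\ref{lem:opening} applies. With the reach input replaced by the paper's Theorem~\ref{thm: flatNormReach}, the rest of your argument (the measure bookkeeping $|R\cap E|\geq |R|-|S_\lambda|$, the $\alpha_\delta$-almost partition, the union bound) goes through as in the paper.

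A secondary point: your proposed fix for getting $|A|$ rather than $|E|$ in the denominator of the exponent --- "using $|A|\leq |E|$ to pass from $|E|$ to $|A|$" --- runs in the wrong direction. Since $|A|\leq|E|$, the bound $1 - M\exp(-cN/|A|)$ is \emph{stronger} than $1 - M\exp(-cN/|E|)$, so the inequality $|A|\leq|E|$ cannot be used to upgrade the latter to the former; the uniform-on-$E$ union bound you correctly derive yields the $|E|$ denominator. The paper itself is terse here (it simply "applies the argument in Theorem~\ref{thm:reach}" to the partition of $A$), so this is as much a looseness in the source as in your write-up, but you should not present the passage from $|E|$ to $|A|$ as a consequence of $|A|\leq|E|$.
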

}

In the case that $E = F \setminus A$ where $F^c$ has positive reach, $|A|$ is small, and $A$ is supported away from the boundary of $E$ (similar to the $E$ in Corollary~\ref{cor:UminusA}), in Theorem~\ref{thm:fnreach} we prove that for a certain range of scales, the \emph{flat norm minimizer} $S_\lambda$ of $\srF_\lambda(\partial E)$ is equal to $A$, and in particular can be used to ``fill'' $E$ back in with $F = E + S_\lambda$. 
{
\renewcommand{\thetheorem}{\ref{thm:fnreach}}
  \begin{theorem} 
Let $E = \bfE^2\myell (U \setminus A)$ where $U$ is a bounded, open subset of $\bbR^2$ with $\mathrm{reach}(\partial U) > \rho > 0$, and let $W$ be an open, compactly supported subset of $U$.
If 
\begin{align*}
0 < 2/\lambda < \rho,
\end{align*}
then 
\begin{align*}
\exists \delta > 0\ \text{such that if } A \subset\subset W \text{ with } |A| < \delta,\ \text{ then } \partial (\bfE^2 \myell U) \in \mathrm{arg\,}\srF_\lambda(\partial E).
\end{align*}
  \end{theorem}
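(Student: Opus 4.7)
The plan is to establish matching upper and lower bounds $\srF_\lambda(\partial E) = \Per(U) + \lambda|A|$, realized by $T_0 := \partial(\bfE^2\myell U)$ paired with an $S_0$ satisfying $T_0 = \partial E - \partial S_0$ and $M(S_0) = |A|$. Such an $S_0$ exists because $\chi_{U\setminus A} = \chi_U - \chi_A$ gives $\partial E = \partial(\bfE^2\myell U) - \partial(\bfE^2\myell A)$, so $S_0$ is $\pm \bfE^2 \myell A$ with the appropriate sign. The associated cost $M(T_0) + \lambda M(S_0) = \Per(U) + \lambda|A|$ yields the upper bound $\srF_\lambda(\partial E) \leq \Per(U) + \lambda|A|$ immediately; the substance is the matching lower bound.

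For the lower bound, I would recast the problem, via coarea and the Chan--Esedoglu--Nikolova relaxation of TV--$L^1$, as minimizing $\Per(\tilde E) + \lambda|E\triangle \tilde E|$ over sets of finite perimeter $\tilde E \subset \bbR^2$, with the claim that $\tilde E = U$ is optimal. Two ingredients combine. First, the reach condition $\reach(\partial U) > \rho > 2/\lambda$ forces $\partial U$ to be $C^{1,1}$ with curvature $\leq 1/\rho < \lambda/2$; by the flat-norm stability of low-curvature boundaries (cf.\ \cite{IbKrVi2013}), any perturbation of $\tilde E$ from $U$ occurring outside $W$ strictly increases the cost, since the boundary-length change is at most (curvature)$\,\times\,$(swept area) $\leq$ (swept area)$/\rho$, while the $L^1$ penalty is $\lambda\,\times\,$(swept area) and $1/\rho < \lambda$. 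Second, inside $W$ the only relevant modifications are choosing a subset $B\subset A$ to fill; the cost is $\Per(U) + \Per(A\setminus B) + \lambda|A\setminus B|$, minimized at $B = A$ because the planar isoperimetric inequality $\Per(C) \geq 2\sqrt{\pi|C|}$ gives $\Per(C) \geq \lambda|C|$ for every component $C \subset A\setminus B$ whenever $|A|<\delta$ with $\delta \leq 4\pi/\lambda^2$.

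The additivity of the two effects uses the positive separation $\dist(\overline{W}, \partial U) > 0$ from $W \subset\subset U$, which prevents interaction between the ``stabilize $\partial U$'' and ``fill $A$'' subproblems. The main obstacle is ruling out competitors that \emph{bridge} this separation---for instance an $\tilde E$ whose boundary runs from $\partial U$ into $A$ along a thin channel, potentially saving on both ledgers simultaneously. I would rule this out by constructing a calibration: a $C^1$ $1$-form $\omega$ on $\bbR^2$ with $\|\omega\|_\infty \leq 1$ and $\|d\omega\|_\infty \leq \lambda$ satisfying $\partial E(\omega) = \Per(U) + \lambda|A|$, which by the dual characterization of $\srF_\lambda$ certifies the lower bound. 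Concretely, take $\omega$ to be the tangent field pulled back along the nearest-point projection on the $\rho$-tubular neighborhood of $\partial U$ (where $|\omega| = 1$ and $|d\omega| \leq 1/\rho < \lambda$), a rotation-type form with $d\omega \equiv -\lambda$ on small disks containing the components of $A$ (valid once each disk has radius at most $2/\lambda$), and smoothly interpolate these on the buffer between. The value of $\delta$ is chosen so that each component of $A$ has diameter at most $4/\lambda$ (controlled by the planar isoperimetric inequality) and also satisfies the volume bound from the isoperimetric step; these two requirements on $\delta$, together with the buffer width provided by $W \subset\subset U$, yield the constant in the statement.
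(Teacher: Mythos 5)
Your strategy---certify the value $\srF_\lambda(\partial E)=\Per(U)+\lambda|A|$ via the explicit competitor $S_0=-\bfE^2\myell A$ for the upper bound and a dual calibration for the lower bound---is a genuinely different route from the paper's, which instead quotes Allard's structure theorem (minimizers differ from $\partial E$ only along tangent circular arcs of radius $1/\lambda$), uses Theorem~\ref{thm:vixie} to place every point of $A$ inside a ball $\bfU(x_a,(1-\epsilon)R)\subset\Sigma_\lambda$, and then uses $\reach(\partial U)>1/\lambda$ to exclude any remaining arcs meeting $\partial U$. Your upper bound is correct, and the calibration idea is sound in principle; but the lower bound as written has concrete gaps.

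The central one: your calibration requires each component of $A$ to sit inside a disk of radius at most $2/\lambda$ (so that the rotation form with $d\omega\equiv-\lambda$ satisfies $\|\omega\|_\infty\le 1$), and you claim this diameter bound is ``controlled by the planar isoperimetric inequality'' once $|A|<\delta$. That is false: bounding $|A|$ gives no control on the diameter of a component of $A$, which can be an arbitrarily long sliver of arbitrarily small area, and the theorem's $\delta$ must work uniformly over all such $A\subset\subset W$. So the disks you need may not exist and your $\omega$ is undefined for admissible $A$. Second, the interpolation on the buffer regions must preserve $\|\omega\|_\infty\le1$ and $\|d\omega\|_\infty\le\lambda$ simultaneously; a cutoff of a unit-norm form over a region of width $w$ contributes roughly $1/w$ to $d\omega$, and on the boundary of each rotation disk you already have $|\omega|=1$, leaving no room to taper---none of this is verified. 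Third, your ``perturbations outside $W$ strictly increase cost'' step is a first-variation estimate valid only for competitors that are normal graphs over $\partial U$ within its tubular neighborhood; it does not exclude global competitors, which is exactly why you introduce the calibration---so the whole lower bound stands or falls with the incomplete calibration. (Minor: in the ``fill $B\subset A$'' comparison the symmetric-difference term is $\lambda|B|$, not $\lambda|A\setminus B|$, though the isoperimetric comparison still goes through.)
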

}
In the context of the previous results, we note that $\reach(U^c) \geq \reach(\partial U)$, and that Theorem~\ref{thm:fnreach} requires the stronger assumption that $\reach(\partial U) > \rho > 0$. 
We also note that this result is not a corollary of Theorem~\ref{prop:flatnormsufficient}, 
but is a result that presents an important example of how the multiscale flat norm works on the first class of open sets studied in Section \ref{sec:beyondwhitney}.

\subsection{Related work}

Problems on coverage by balls centered at random points are well studied in stochastic geometry \cite{ChStKeMe2013}.
The related topic of topology of random \v{C}ech or geometric complexes is an active area of research \cite{BoKa2018,BoWe2017}.

Janson studied a version of the problem where the sample $\bbX$ is drawn from a larger set $F$ such that $E \subset F^o$, its interior, thus avoiding the need to deal with boundary issues \cite{Ja1986}.
Finding the expected Hausdorff distance between random samples of uniform distributions and their support is closely related to our problem, and has been studied in general metric measure spaces. 
Reznikov and Saff~\cite[Theorem 2.1]{reznikov2016covering} gave a result for a finite positive Borel measure $\mu$ supported on a metric space $(\srX, m)$ that also satisfies 
$\mu(\bfB(x, r)) \geq \Phi(r)$ for all $x \in \srX$ and all $r < r_0$
for some $r_0 > 0$, and for some non-negative, strictly increasing, continuous function $\Phi : (0, \infty) \to \bbR$ satisfying $\lim_{r \to 0}\Phi(r) = 0$.
Their result states that if $\mu$ satisfies the above criteria, there exists constants $c_1, c_2, c_3$ and $\alpha_0$ such that for any $\alpha > \alpha_0$,
\begin{align}\label{eq:probpaper}
P\left[\sup_{y \in E} \dist(y, \bbX_N) \geq c_1\,\Phi^{-1}\left( \frac{\alpha \log N}{N}\right)\right] &\leq c_2\,N^{1 - c_3\alpha}.
\end{align}
As stated, their theorem requires $\Phi(r)$ to be positive \emph{for all} $x \in \srX$. 

We describe how our two main theorems relate to this work. 
Our Lemma~\ref{lem:opening} applies to $\mu = \clL^\dmn\myell E$ for a set $E$ such that its complement $E^c$ has positive reach.
Indeed this implies that $E$ may not, also, contain isolated points. 
However, our proof is simple enough to easily see how removing any set of small $\clL^\dmn$-measure from $E$ will effect our analogous bound~(\ref{eq:problowerbound}) of Theorem~\ref{thm:reach}.
The effect of removing such a set of small measure is discussed after the proof of Theorem~\ref{thm:reach}, and is stated with inequality~(\ref{eq:problowerbound2}).
In Theorem~\ref{thm:fnreach}, we show how we may use the multi-scale flat norm on certain types of ``badly behaved'' sets $E\subset \bbR^2$ (the metric measure space $(E, ||\cdot||_2, \clL^2\myell E)$ may not admit a strictly decreasing $\Phi$ such that $\clL^2\myell E(\bfB(x, r)) \leq \Phi(r)$ for all $x \in E$)
to identify a nice enough set that can partition with the method of Lemma~\ref{lem:opening}. 

Our work is related to the following {\it random coverage problem}.
For a compact region $E$ in $\bbR^\dmn$, what is the probability that $E$ is fully covered by a union of balls of radius $r$ centered on $N$ points placed independently and uniformly on $E$ in the limit as $N \to \infty$ and as $r(N) \to 0$ in an appropriate manner?
Penrose~\cite{penrose2021random} gives a limiting behavior of the probability $P\left[R_{N, k} < r(N)\right]$
for any $k \in \bbN$ as one increases the number of sample points $\bbX_N$ and for any decreasing sequence $r(N) \to 0$ as $N\to \infty$, where $R_{N, k}$ denotes the smallest radius necessary for $\{\bfB(x, R_{N, k})\}_{x \in \bbX_N}$ to cover $E$ $k$-times.
These results hold when $E$ has {\it smooth boundary} for $n \geq 2$, or when $E$ is a polytope for $n \leq 3$. 
In our work, we give quantitative results on the probability~(\ref{eq:probability}) where, in contrast, we do not consider covering $E$ more than a single time and where $\delta$ is \emph{independent} of $N$.
Furthermore, our general dimensional results hold for bounded open subsets of $\bbR^\dmn$ with complements of \emph{positive reach}.
There are several classes of sets that have complements of positive reach but do not have smooth boundary,
e.g., when $E$ consists of two disks with intersecting interiors.
And unlike most previous work, we also study sets that may not satisfy the positive reach condition (see Figure \ref{fig:EminusA} for a set whose complement does not have positive reach), but could be \emph{almost} covered.

\section{Background}

Since we use only elementary results on probability, most of this section provides relevant background from geometric measure theory.
For the concepts that are closely related to \emph{reach}, we use the notation from Federer's seminar paper~\cite{federer-1959-1} on \emph{curvature measures}. 
For concepts related to \emph{currents}, we use the notation in Federer's comprehensive treatise on \emph{Geometric Measure Theory}~\cite{federer-1969-1} as well as Vixie's paper on properties of flat norm minimizers~\cite{Vi2007}.

\noindent We denote open balls of radius $r$$>$$0$ centered at $x \in \bbR^\dmn$ as $\bfU(x, r)$ and closed balls as $\bfB(x, r)$.

\setcounter{theorem}{0}

\begin{definition}
  Let $E$ be an open bounded subset of $\bbR^\dmn$ and suppose that $\bbX = (X_i)_{i = 1}^N$ are independently and identically distributed random variables with respect to the uniform distribution of $\Omega$. 
i.e., $X_i\sim \mathrm{Unif}(\Omega)$ for all $i = 1,\dots, N$.\\
\end{definition}

\subsection{Sets of positive reach}

Let us now go over some definitions and properties about sets with positive reach. 
While we may not use all of these properties in our proofs, we nevertheless state them since they give helpful intuition to the reader.

\begin{definition}\label{def:reach}
We define the {\bfseries distance function} $\delta_A :\bbR^\dmn\to \bbR$ of a subset $A$ of $\bbR^\dmn$ as
\[\delta_A(x) = \mathrm{dist}(x, A) := \inf_{a \in A} |x - a| \, \text{ for } x \in \bbR^\dmn.\]
In addition, we define 
\[\Unp(A)\]
to be the set of all points $x \in \bbR^\dmn$ such that there exists a unique point of $A$ nearest to $x$, and the {\bfseries nearest point map} $\xi_A: \Unp(A) \to A$ as the one
mapping $x\in \Unp(A)$ to the unique $a \in A$ such that $\delta_A(x) = |x - a|$. 

If $a \in A$, then 
\[\mathrm{reach}(A, a) = \sup\{r : \bfU(a, r) \subset \Unp(A)\}.\]
In addition, we define the {\bfseries reach} of $A$ to be 
\[\reach(A) = \inf\{\reach(A, a) : a \in A\}.\]
\end{definition}

\begin{definition} \label{def:outinormal}
Let $\gamma: \mathbb{S}^1 \to \mathbb{R}^2$ to be a $C^2$ isometric embedding of the circle, and let $\Gamma$ denote its image. 
For any $\epsilon > 0$, we define the {\bfseries outer-normal map}, $\alpha_\epsilon: \mathbb{S}^1\to\mathbb{R}^2$ as $\mathbf{v}_\epsilon(s) = \gamma(s) + \epsilon N(s)$, where $N(s)$ denotes the outer unit normal of $\Gamma$ at $\gamma(s)$.
Similarly, we may define the {\bfseries inner-normal map} $\bar{\mathbf{v}}_\epsilon$ with the inner unit normal of $\Gamma$ instead. 
We will denote the image of the outer-normal map and inner-normal map as $\Gamma_\epsilon$, and $\bar{\Gamma}_\epsilon$ respectively. 

We define the {\bfseries injectivity radius} of $\gamma$ to be the supremal $\epsilon$ for which both $\mathbf{v}_\epsilon$ and $\bar{\mathbf{v}}_\epsilon$ are injective on $\mathbb{S}^1$.
\end{definition}

\begin{remark}
If $\gamma : \mathbb{S}^1 \to \bbR^2$ is an $C^2$ isometric embedding, then $\reach(\gamma(\mathbb{S}^1))$ equals the injectivity radius of $\gamma$.
\end{remark}

\begin{definition}
Let $A \subset \bbR^\dmn$ and $a \in A$.  
We define the {\bfseries tangent cone}
\[\Tan(A, a)\]
{\bfseries  of $A$ at $a \in A$}
to consist of all {\bfseries tangent vectors} $u \in \bbR^\dmn$ where either $u = 0$, or for every $\epsilon > 0$ there exists a point $b \in A$ such that 
\[0 < |b - a| < \epsilon \quad \text{ and } \quad \left| \frac{b - a}{|b - a|} - \frac{u}{|u|}\right| < \epsilon.\]

\noindent We will also consider the {\bfseries normal cone of $A$ at $a \in A$} defined as
\[\Nor(A, a) = \{v \in \bbR^\dmn : v \cdot u \leq 0,\text{ for any } u \in \Tan(A, a)\},\]
whose elements are called {\bfseries normal vectors of $A$ at $a\in A$}.
\end{definition}

The following fact was stated by Federer~\cite[\text{Remark 4.20}]{federer-1959-1}, and was proven by Lytchak~\cite[\text{Proposition 1.4}]{lytchak2005almost}.
If $A \subset \bbR^\dmn$ is a topological manifold of dimension $0 < k < n$ and $\reach(A) > 0$ then $A$ is a $k$-dimensional $C^{1,1}$ submanifold of $\bbR^\dmn$. 
That is, it can be locally represented as the graph of a $C^1$ function whose differential is Lipschitz. 

We will use the following property of reach to prove Theorem~\ref{thm:fnreach}.
Its proof may be found in Federer's paper on \emph{Curvature Measures}~\cite{federer-1959-1}, along with the proofs of many other useful properties of reach.

\smallskip
\noindent \cite[{\bfseries Theorem 4.8.(2)}]{federer-1959-1}.
If $a \in A$ and $\reach(A, a) > r > 0$, then 
\[\Nor(A, a) = \{\lambda v : \lambda \geq 0, |v| = r,\ \xi(a + v) = a\}.\]

\begin{remark}
Let $\Gamma$ be an embedded $C^1$ closed curve in $\bbR^2$ parameterized by a differentiable injection $\gamma : S^1 \to \bbR^2$ such that $\gamma(S^1) = \Gamma$ and $\dot{\gamma}(t) \neq 0$ for all $t \in S^1$.
Then for any $t \in S^1$, 
\[\Tan(\Gamma, \gamma(t)) = \{\lambda \dot{\gamma}(t) : \lambda \in \bbR\},\]
and for any $v \in \bbR^2$ with $v\cdot \dot{\gamma}(t) = 0$,
\[\Nor(\Gamma, \gamma(t)) = \{\lambda v : \lambda \in \bbR\}.\]
From the above result and Theorem 4.8.(2) of Federer~\cite{federer-1959-1}, we get that if $\reach(\Gamma) > \rho > 0$ then for any $t \in S^1$, $0 < r < \rho$, and for any $v \in \bfU(0, r) \cap \Nor(\Gamma, \gamma(t))$, 
\[\xi(\gamma(t) + v) = \gamma(t).\] 
\end{remark}

\begin{definition}\label{def:opening}
We use the following concepts from mathematical morphology.
Let $E, F\subset \bbR^\dmn$. 
The {\bfseries erosion} of $E$ by $F$ is defined as $E\ominus F = \{x \in E : x + F \subset E\}$ and the {\bfseries opening} of $E$ by $F$ is defined as $E\circ F = (E\ominus F) + F$.
See Figure~\ref{fig:erosionandopening} for an illustration.
\end{definition}

\begin{figure}[htp!]
  \centering 
  \includegraphics[width=1\linewidth]{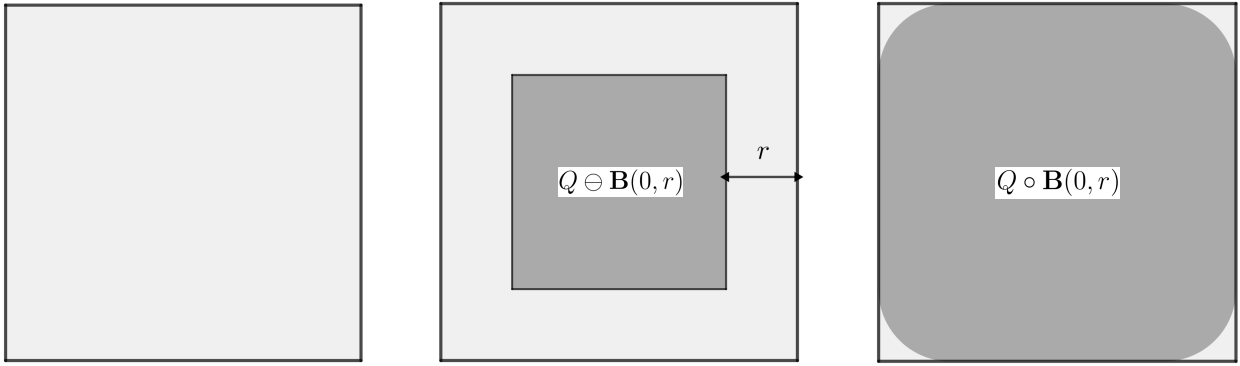}
  \caption{On the left we have a closed cube $Q$.
    The darker region inside $Q$ in the center is $Q \ominus \bfB(0, r)$, the erosion of $Q$ with the closed ball $\bfB(0, r)$.
    On the right we have the opening of $Q$ with $\bfB(0, r)$, which is obtained by taking the Minkowski sum of the dark region from the center figure and $\bfB(0, r)$. 
  \label{fig:erosionandopening}
  }
\end{figure}

\subsection{Currents and the flat norm}

We will be working with $\clL^\dmn$-measurable sets $A \subset \bbR^\dmn$ of finite perimeter with finite $\clL^\dmn$-measure. 
In the language of \emph{currents}, these are sets where the $\dmn$-dimensional current 
\[\bfE^\dmn\myell A\]
is rectifiable, and where its boundary, $\partial(\bfE^\dmn\myell A)$ is also rectifiable.
Currents of this type are {\bfseries integral currents} in $\bbR^\dmn$ and are denoted by $\bfI_n(\bbR^\dmn)$.
Here, 
\[\bfE^\dmn = \clL^\dmn\wedge \bfe_1 \wedge \dots \wedge \bfe_\dmn\]
is the constant $\dmn$-vectorfield on $\bbR^\dmn$ defined by $\bfE^\dmn(x) = e_1 \wedge \dots \wedge e_\dmn$ for all $x \in \bbR^\dmn$, where $\{e_i\}_{i = 1}^\dmn$ forms a standard orthonormal basis of $\bbR^\dmn$.
For an $\clL^\dmn$-measurable subset $A \subset \bbR^\dmn$, the current $\bfE^\dmn\myell A$ is defined so that 
\[\bfE^\dmn(\phi) = \int_A \langle e_1 \wedge \dots \wedge e_\dmn, \phi(x)\rangle\, d\clL^\dmn x \]
for any differential form $\phi$ of degree $\dmn$ and class $\infty$ that is compactly supported in $\bbR^\dmn$.

We now define the \emph{multi-scale flat norm}, a tool that we will use in Theorem~\ref{thm:fnreach} to ``fix'' certain types of sets $E^c$ that do not have positive reach.

The \emph{flat norm} was originally introduced by Whitney~\cite{whitney2015geometric} and was later used by Federer and Fleming~\cite{federer-1960-1} as a way to endow currents with a metric with the aim of showing the existence of solutions to what is now called \emph{the oriented Plateau's problem}. 
The definition of the flat norm can be found in Federer's book on geometric measure theory~\cite[\S4.1.7.]{federer-1969-1}.
Morgan and Vixie~\cite{MoVi2007} realized that the \emph{$L^1$-Total Variation} 
\[F(u) = \int_{\bbR^\dmn} |\nabla u(x)|\, dx + \lambda \int_{\bbR^\dmn} |f(x) - u(x)|\, dx\]
for functions $u, f : \bbR^\dmn\to \bbR$ of bounded variation is equal to the flat norm on $\partial A$.
In particular, by letting $f$ be the characteristic function of $A$, i.e., $1_A : \bbR^\dmn\to \bbR$ is defined to be $1$ when $x \in A$ and $0$ otherwise, minimizing $F(u)$ over functions of bounded variation is equivalent to minimizing 
\begin{align}\label{eq:fnfiniteperimeter}
E(\Sigma) := \Per(\Sigma) + \lambda\,|\Sigma \Delta A|
\end{align}
over sets of finite perimeter $\Sigma \subset \bbR^n$. 
Here, $\Sigma \Delta A = (\Sigma \setminus A) \cup (A\setminus \Sigma)$. 
This observation motivated their introduction of the following \emph{multiscale flat norm}, which we will now state in the context of both~(\ref{eq:fnfiniteperimeter}) and Federer's book~\cite[\S4.1.24.]{federer-1969-1}. 

\begin{definition}{\bf~\cite{MoVi2007}} \label{def:MFN}
  Let $T$ be an $m$-dimensional current in $\bbR^\dmn$ and let $\lambda \geq 0$. 
  The {\bfseries multiscale flat norm} of $T$ is given by 
  \[\srF_\lambda(T) = \inf \{\bfM(T - \partial S) + \lambda\, \bfM(S)\} \]
  over $(m+1)$-dimensional currents in $\bbR^\dmn$. 

  In the special case that $A \subset \bbR^\dmn$ is an $\clL^\dmn$-measurable set of finite perimeter and $T = \partial (\bfE^\dmn\myell A)$, it follows  that $\srF_\lambda(T)$ is the least number in the set $\{\bfM(T - \partial S) + \lambda\, \bfM(S)\}$ taken over $(m+1)$-dimensional rectifiable currents $S$ in $\bbR^\dmn$ and $m$-dimensional rectifiable currents $T - \partial S$ in $\bbR^\dmn$~\cite[4.2.18.]{federer-1969-1}. 
  In this case, $\srF_\lambda(\partial(\bfE^\dmn\myell A))$ is equal to 
  \begin{align}\label{eq:fn2}
    \inf \{\Per(\Sigma) + \lambda\,|\Sigma \Delta A|\}
  \end{align}
  over sets of finite perimeter $\Sigma \subset \bbR^\dmn$.

  The class of solutions to $\srF_\lambda(T)$ is denoted by $\mathrm{arg}\,\srF_\lambda(T)$, and its members will be denoted as $S_\lambda$. 
  The class of solutions to the optimization problem (\ref{eq:fn2}) is
  \[\{\mathrm{spt}(A - S_\lambda) : S_\lambda \in \mathrm{arg}\,\srF_\lambda(T)\},\]
  and its members will be denoted by $\Sigma_\lambda$. 
\end{definition}

We previously studied  \cite{IbKrVi2013} the multiscale flat norm in the simplicial setting where the currents $T$ and $S$ are represented by $m$- and $(m+1)$-dimensional chains, respectively, in a simplicial complex embedded in $\bbR^\dmn$.
We showed that the multiscale simplicial flat norm can be computed efficiently for the setting relevant to this paper---where we want to cover an $\dmn$-dimensional set $E$ in $\bbR^\dmn$---using linear programming.

\subsection{Measure Theoretic Boundary}

To simplify our analysis of the energy $E(\Sigma)$ in Equation (\ref{eq:fnfiniteperimeter}), we introduce the measure theoretic boundary, interior, and exterior. 

Let $E\subset \bbR^n$. 
We define the {\it perimeter} of $E$, as $\Per(\Sigma) := \int |\nabla_{\chi_E}|\,dx$ where $\chi_E : \bbR^n \to \bbR$ is the characteristic function on $E$ defined by $\chi_E(x) = 1$ if $x \in E$ and $0$ otherwise. 
We say a set $\Sigma \subset \bbR^n$ is a {\it set of finite perimeter} if $\Per(\Sigma) < \infty$. 
The structure theorem for sets of finite perimeter tells us that $\Per(\Sigma) = \clH^{n-1}(\partial^\ast \Sigma)$, where $\partial^\ast \Sigma$ is the {\it reduced boundary} of $\Sigma$. 
The reduced boundary is rather complicated to define and difficult to manipulate. 
Instead, we use another theorem which asserts $\partial^\ast \Sigma \subset \partial_\ast \Sigma$ and $\clH^{n-1}(\partial_\ast \Sigma - \partial^\ast \Sigma) = 0$ to conclude that $\Per(\Sigma) = \clH^{n-1}(\partial_\ast \Sigma)$, where $\partial_\ast \Sigma$ denotes the {\it measure theoretic boundary} of $\Sigma$. 
(See \cite[Theorem 2, \S 5.7, Lemma 1 \S 5.8]{evans-1992-1} for more details.) 
We now define measure theoretic boundary, interior, and exterior.

\begin{definition}\label{def: boundary}
Let $A \subset \bbR^n$.
Then the {\it measure theoretic boundary} of $A$ is defined as
\begin{equation}\label{def: mtb}
\partial_\ast A := \left\{x \in \bbR^n : \limsup_{r\to0+} \frac{|\bfU(x, r)\cap A|}{r^n} > 0 \text{ and } \limsup_{r\to0+} \frac{|\bfU(x, r)\cap A^c|}{r^n} > 0\right\}.
\end{equation}
The {\it measure theoretic interior} of $A$ is defined as
\begin{equation}\label{def: mti}
A_\ast^i := \left\{x \in \bbR^n : \limsup_{r\to0+} \frac{|\bfU(x, r)\cap A^c|}{r^n} = 0\right\}.
\end{equation}
The {\it measure theoretic exterior} of $A$ is defined as 
\begin{equation}\label{def: mte}
A_\ast^o := \left\{x \in \bbR^n : \limsup_{r\to0+}\frac{|\bfU(x, r) \cap A|}{r^n} = 0\right\}.
\end{equation}
\end{definition}

\section{Whitney-type Decompositions for Probabilistic Covers}\label{sec:whitney}

We start with the following observation about how partitioning $E$ into a finite number of pieces $\clR_E = \{R_i\}_{i = 1}^M$ with a uniform lower bound on the mass of each piece and a uniform upper bound on their diameters allows for a lower bound on the probability that the neighborhood of our sampling covers $E$. 

\begin{definition}
We call a partition $\clR_E$ of $E$ a {\bfseries good partition} if for some $\delta > 0$ and $\epsilon > 0$ 
\begin{align}
\diam(R) &\leq \delta\ \mathrm{for\ all}\ R\in \clR_E,\ \mathrm{and}\label{prop:diamupperbound}\\
|R| &\geq \epsilon\ \mathrm{for\ all}\ R\in \clR_E. \label{prop:masslowerbound}
\end{align}
\end{definition}

For simplicity, let us for now assume that $|E| = 1$. 
If we take i.i.d.~samples $\bbX = X_1, \dots, X_N \sim \mathrm{Unif}(E)$ from $E$, then for any sample point $X \in \bbX$ and any partition element $R \in \clR_E$, we have that $P(X \notin R) = 1 - |R|$ and hence $P(\bbX \cap R = \emptyset) = (1 - |R|)^N \leq \exp(-|R|N)$.

Subadditivity then tells us that
\begin{align}\label{eq:probLB}
P(\bfB(\bbX, \delta) \supset E) &\geq 1 - \sum_{R\in \clR}\exp(-|R|N)\\
&\geq 1 - M\exp(-\epsilon N).
\end{align}

With the goal of bringing geometry into the picture, we aim to derive sufficient and/or necessary geometric conditions on $E$ that will allow us to find a partition $\clR_E$ of $E$ that will satisfy the diameter upper bound property in Equation (\ref{prop:diamupperbound}) and the volume lower bound property in Equation (\ref{prop:masslowerbound}) for suitable values of $\delta$ and $\epsilon$.

Our procedure to find a partition $\clR_E$ of interest initially follows the first parts of the procedure to find the \emph{Whitney decomposition}~\cite[Chapter I\ \S 3.1]{stein1970singular} of $E$.
The Whitney decomposition of $E$ gives us a family $\clQ$ of closed cubes that satisfy
\begin{align}
    &E\subset \bigcup_{Q \in \clQ} Q;\\
    &\interior(Q_i) \cap \interior(Q_j) = \emptyset\ \mathrm{for}\ i\neq j;\ \mathrm{and}\\
    &\diam(Q) \leq \dist(Q, E^c) \leq 4\,\diam(Q) \ \mathrm{for\ all}\ Q \in \clQ,
\end{align}
where $\interior(Q)$ denotes the interior of the cube $Q$ and $\diam(Q)$ denotes the diameter of $Q$.

While following Whitney's procedure, we need to make sure that the diameter of each cube is not larger than $\delta$ and that the volume of each cube is larger than $\epsilon$, i.e., it does not get too small.  
Obtaining the cubes in $\clQ$ that satisfy the first criterion in Equation (\ref{prop:diamupperbound}) is easy since we may further subdivide any large cubes into smaller ones. 
The second criterion, however, is a bit harder to satisfy since the diameters, and hence volumes, of the cubes get arbitrarily small as they approach the boundary of $E$.
 
Now, for $\delta > 0$, let $\clQ_\delta$ be the collection of all cubes in $\clQ$ whose diameters are larger than or equal to $\delta$. 
Although $\cup_{Q \in \clQ_\delta} Q \supset \{x \in E : \dist(x, E^c) \geq 2\delta\}$,
we do not have any control over 
\[\sup_{y \in E}\dist(y, \cup_{Q \in \clQ_\delta} Q)\]
(see Figure~\ref{fig:dumbell_whitney}).
This implies that we might have to consider substantially large neighborhoods around each cube in $Q_\delta$ in order to cover an arbitrary compact subset $E$ of $\bbR^\dmn$.

\begin{figure}[htp!]
  \centering \includegraphics[width=.8\linewidth]{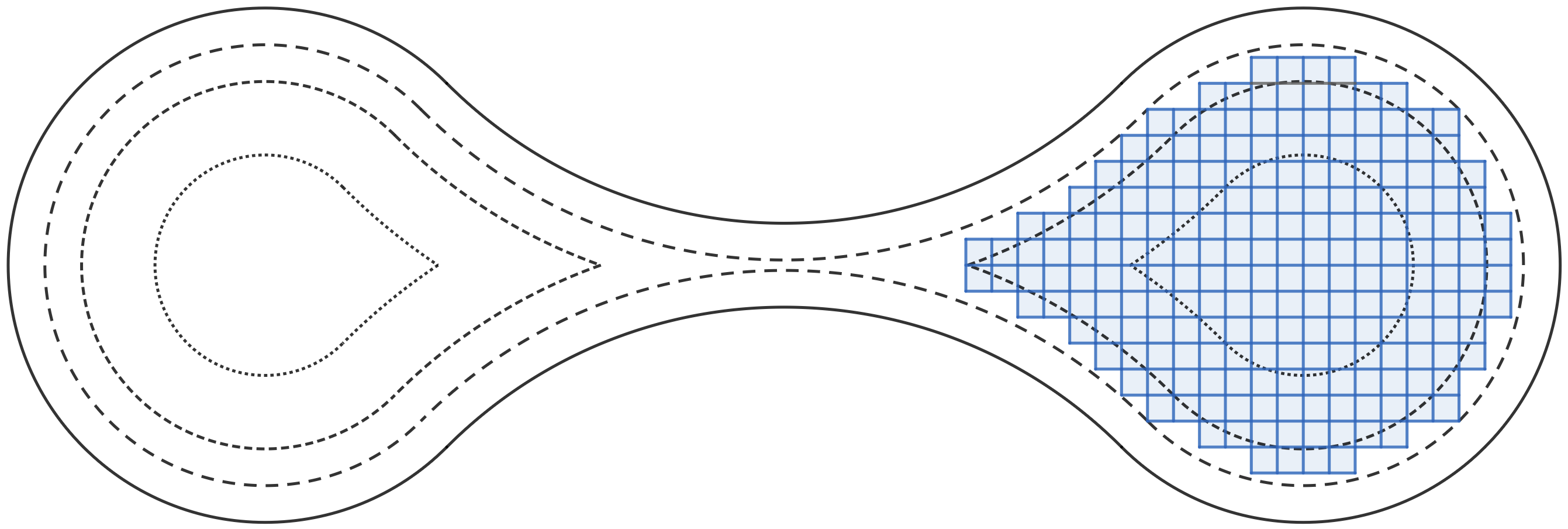}
  \caption{The set $E$ is everything inside the dumbbell shape. 
  Everything inside first set of dotted lines is at least $\delta$ away from $E^c$. 
  Similarly, inside the second set of dotted lines is at least $2\delta$, and everything inside the third set of dotted lines is at least $2^2\delta$ away from $E^c$. 
  Half of the collection of cubes of diameter $\delta$ in $\clQ_\delta$ are shown on the right. 
  These cubes will cover everything inside $E$ that is at least $2\delta$ away from $E^c$.
  Notice that elongating the neck region would not change the cubes, however, the largest distance $\sup_{y \in E}\dist(y, \cup_{Q\in \clQ_\delta}Q)$ from points in $E$ to the cubes becomes larger.}
  \label{fig:dumbell_whitney}
\end{figure}

We seek sufficient conditions on $E$ which guarantee that $\sup_{y \in E} \dist(y, \clQ_\delta)$ is small enough.  
In particular, if $E$ is equal to the \emph{opening} (see Definition~\ref{def:opening}) of $E$ by the open ball of radius $\delta$, i.e.,
\[E = E \circ \bfU(0, \delta),\]
we may do the following. 
First, consider the lattice $\ell \bbZ^\dmn$ that defines a family of cubes $\Delta_\ell$, and choose $\ell$ so that the diameter of each cube in $\Delta_\ell$ is $\delta$. 
We will know that the union of the $\delta$-neighborhoods of each cube in $\Delta_\ell$ that intersects the \emph{erosion} (see Definition~\ref{def:opening}) of $E$ by $\bfB(0, \delta)$ will cover $E$.

Although these fattened cubes are not pairwise disjoint anymore, their diameters are bounded above by $3\delta$ and we are easily able to turn them into a partition $\clR_E$ of $E$ with the same bound on their diameters, and with a lower bound of $\delta^\dmn/(\dmn^{\dmn/2})$ on their volumes.
This argument is detailed in the proof of Lemma~\ref{lem:opening}, where we assume a related condition called \emph{reach}, a concept that was introduced in Federer's seminal paper~\cite{federer-1959-1}. 
This relationship is given by the following fact, which follows from~\cite[Lemma 4.8]{rataj2019curvature}.
\emph{If $\reach(E^c) > \rho > 0$, then $E = E\circ \bfU(0, \rho)$.} 
The converse of this fact is, however, not true as can be seen in the following example. 

\begin{example}\label{ex:reachandopening}
  Although $\reach(E^c) = \delta > 0$ implies that $E = E\circ \bfU(0, \delta)$, the converse is not true. 
  For example, let $x, y \in \bbR^2$ such that $\norm{x - y} = 2R$. 
  Then for any $\delta > R$, $E := \bfU(x, \delta) \cup \bfU(y, \delta)$ has $\reach(E^c) < \delta$ but $E = E\circ \bfU(0, \delta)$.
\end{example}

\begin{remark}
  Although the sufficiency of $E = E\circ \bfU(0)$ implies that positive reach is also sufficient, the reach will still serve an important role in this paper due to the reach properties of flat norm minimizers in $\bbR^2$.
\end{remark}
 
\begin{lemma}\label{lem:opening}
	Let $E$ be a bounded open subset of $\bbR^\dmn$.
	If
	\[\reach(E^c) > \rho > 0,\] 
	then for any $0 < \delta < \rho$, there exists a finite partition $\clR_E$ of $E$ such that   
	\begin{align}
		|R| \geq \frac{\delta^\dmn}{\dmn^{\dmn/2}}\quad \mathrm{and} \quad \mathrm{diam}(R) \leq 3\delta\quad \mathrm{for\ all}\ R \in \clR_E.
	\end{align}
\end{lemma}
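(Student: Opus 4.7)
The plan is to build $\clR_E$ as a Voronoi-style decomposition of $E$ anchored on a finite subfamily of a uniform cubical grid. Set $\ell = \delta/\sqrt{\dmn}$, so the closed axis-aligned cubes of the lattice $\ell\bbZ^\dmn$ have diameter $\delta$ and volume $\ell^\dmn = \delta^\dmn/\dmn^{\dmn/2}$. From this family $\clQ$ I select the finite subfamily
\[ \clQ^\ast := \{Q \in \clQ : Q \cap (E \ominus \bfU(0, \delta)) \neq \emptyset\} = \{Q_1, \ldots, Q_M\}, \]
where the open-ball erosion $E \ominus \bfU(0, \delta)$ equals $\{x \in E : \dist(x, E^c) \geq \delta\}$. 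Boundedness of $E$ makes $M$ finite; and if $x \in Q_i$ lies in the erosion, then $\bfB(x, \delta) \subset E$ together with $\diam(Q_i) = \delta$ force $Q_i \subset \bfB(x, \delta) \subset E$, so every selected cube sits inside $E$.

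The key geometric step extracts from $\reach(E^c) > \rho > \delta$ the claim that every $y \in E$ lies within distance $\delta$ of some $Q_i \in \clQ^\ast$. If $\dist(y, E^c) \geq \delta$, then $y$ itself lies in the erosion and hence in some $Q_i$. Otherwise $d := \dist(y, E^c) \in (0, \delta)$, and since $d < \rho$ there is a unique nearest $z \in E^c$. Setting $v := (y - z)/d$ and $x := z + \delta v$, the reach identity of Federer 4.8(2) recalled in the Background gives $\xi_{E^c}(x) = z$ and $\dist(x, E^c) = \delta$, so $x$ lies in the erosion (hence in some $Q_i$) and $|y - x| = \delta - d < \delta$. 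I then convert this overlapping cover into a partition by putting, for each $y \in E$,
\[ i(y) := \min\left\{ i \in \{1, \ldots, M\} : \dist(y, Q_i) = \min_{1 \leq j \leq M} \dist(y, Q_j)\right\}, \]
and setting $R_i := \{y \in E : i(y) = i\}$.

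To verify the two inequalities, note that the cubes of $\clQ$ have pairwise disjoint interiors and each $Q_i \subset E$, so every $y \in \mathrm{int}(Q_i)$ satisfies $\dist(y, Q_i) = 0 < \dist(y, Q_j)$ for $j \neq i$; hence $\mathrm{int}(Q_i) \subset R_i$ and $|R_i| \geq |Q_i| = \delta^\dmn/\dmn^{\dmn/2}$. Conversely any $y \in R_i$ inherits $\dist(y, Q_i) \leq \delta$ from the coverage step, so $R_i \subset Q_i + \bfB(0, \delta)$ and $\diam(R_i) \leq \diam(Q_i) + 2\delta = 3\delta$. The step I expect to require the most care is the reach-to-coverage argument: one must translate $\reach(E^c) > \rho$ into the opening-type inclusion $E \subset (E \ominus \bfU(0, \delta)) + \bfB(0, \delta)$, being careful with open versus closed balls so that the point $x = z + \delta v$ both lands in the erosion and sits within distance $\delta$ of $y$, and this is where the Federer normal-bundle result quoted in the Background is essential.
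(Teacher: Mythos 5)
Your proof is correct and follows essentially the same approach as the paper: select the closed cubes of the lattice $(\delta/\sqrt{\dmn})\bbZ^\dmn$ that meet the $\delta$-erosion of $E$, use positive reach of $E^c$ to show their $\delta$-neighborhoods cover $E$, and disjointify. The only (harmless) differences are that you derive the coverage step directly from Federer's Theorem 4.8(2) where the paper cites the opening identity $E = E\circ\bfU(0,\rho)$ from Rataj--Z\"ahle, and you disjointify by a nearest-cube (Voronoi) rule rather than the paper's inductive set subtraction; both yield the same volume and diameter bounds.
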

\begin{proof}
  To use the Whitney decomposition on $E$, we first note that since $\reach(E^c) > \rho > \delta > 0$, we have $E = E\circ \bfU(0, \delta)$. 
  By definition, this means that $E$ is the $\delta$-neighborhood of $E^\ast = \{x \in E : \dist(x, \partial E) \geq \delta\}$, the points in $E$ that are at least $\delta$ away from the boundary, i.e., $\bfB(E^\ast, \delta) = E$.
  This implies that if we are able to find a family $\clQ$ of cubes of side lengths $\ell \geq 0$ which covers $E^\ast$, then the collection of all neighborhoods $\bfB(\clQ, \delta) := \{\bfB(Q, \delta)\}_{Q \in \clQ}$ will be a cover of $E$.
  Since the neighborhoods in $\bfB(\clQ, \delta)$ may not be pairwise disjoint, if in addition, our family $\clQ$ satisfies 
  \begin{align*}
    Q &\subset E \quad \mathrm{and}\quad \interior(Q)\cap \interior(Q') = \emptyset\quad \mathrm{for\ all}\quad Q\neq Q' \in \clQ,
  \end{align*}
  then the collection of $\bfB(\clQ, \delta)$ may be used to construct the desired partition $\clR_E$ of $E$ such that its partitioning elements have a uniform upper bound in diameter and lower bound in volume.
  In particular, $\clR_E$ will satisfy
  \begin{align*}
    |R| \geq \ell^\dmn\quad \mathrm{and}\quad \diam(R) \leq \sqrt{n}\ell + 2\delta \quad \mathrm{for\ all}\quad R\in \clR_E.
  \end{align*}

  Consider an arbitrary ordering of the cubes in $\clQ = \{Q_i\}_{i = 1}^M$. 
  By way of induction, we first define $R_1$ to be $\bfB(Q_1, \delta)\cap E$ except for the region of $E$ that is contained by the rest of the cubes, i.e.,
  \[R_1 = [\bfB(Q_1, \delta)\cap E]\setminus \bigcup_{j = 2}^M Q_j.\]
  Then, for $k = 2, 3, \dots, M$, we similarly define $R_k$ to be $\bfB(Q_k, \delta)\cap E$ except for the region of $E$ that is contained by the rest of the cubes in $\{Q_j\}_{j = k + 1}^M$ or the previously assigned partitions in $\{R_j\}_{j = 1}^{k-1}$, i.e., let
  \[R_k = [\bfB(Q_k, \delta) \cap E] \setminus \left[\bigcup_{j = 1}^{k-1}R_j \cup \bigcup_{j = k + 1}^M Q_j\right].\]

  The method employed by the Whitney decomposition of $E$ will provide us with $\clR_E$. 
  Simply speaking however, for any $\ell > 0$, the lattice $\ell\,\bbZ^\dmn$ defines a family of cubes $\Delta_\ell$ of side length $\ell$, where the diameter of each cube in $\Delta_\ell$ equals $\sqrt{\dmn}\ell$.
  Letting $\ell = \delta/\sqrt{\dmn}$, any cube $Q$ in 
  \[
  \clQ = \{Q \in \Delta_\ell : Q\cap E^\ast \neq \emptyset\}
  \]
  will be contained in $E$ since $\diam(Q) \leq \delta$. 
  Using such a $\clQ$, we find that our constructed partition $\clR_E$ of $E$ satisfies 
  \[
  |R| \geq \frac{\delta^\dmn}{\dmn^{\dmn/2}}, \text{ and } \diam(R) \leq 3\delta \quad \text{for}\quad R\in \clR_E.
  \]
\end{proof}

\begin{proposition}\label{thm:opening2}
	Let $E$ be a subset of $\bbR^\dmn$ such that
	\[\sup_{x \in E} \dist(x, E^c) = \rho^\ast > 0.\] 
	Then for any $0 < \delta < \rho^\ast$, there exists a finite partition $\clR_E$ of $E$ such that   
	\begin{align}
	  |R| \geq \frac{\delta^\dmn}{\dmn^{\dmn/2}}\quad \mathrm{and} \quad \mathrm{diam}(R) \leq \delta + 2\eta_\delta \quad \mathrm{for\ all}\ R \in \clR_E,
	\end{align}
	where 
	\[\eta_\delta = \sup_{y \in E}\dist(y, E\ominus \bfB(0, \delta)).\]
\end{proposition}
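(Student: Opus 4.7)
The plan is to adapt the construction in the proof of Lemma~\ref{lem:opening}, replacing the $\delta$-neighborhoods of the lattice cubes by $\eta_\delta$-neighborhoods. The role that the opening hypothesis $E = E\circ \bfU(0,\delta)$ played in Lemma~\ref{lem:opening}---namely, ensuring $E$ is covered by the $\delta$-neighborhood of the erosion $E^\ast := E\ominus \bfB(0,\delta)$---is now played directly by the definition of $\eta_\delta$, since by construction every point of $E$ lies within distance $\eta_\delta$ of $E^\ast$. Concretely, I would set $\ell = \delta/\sqrt{\dmn}$, let $\Delta_\ell$ denote the family of closed axis-aligned cubes of side $\ell$ generated by $\ell\bbZ^\dmn$ (so $\diam(Q) = \delta$ for each $Q \in \Delta_\ell$), and define
\[\clQ = \{Q \in \Delta_\ell : Q\cap E^\ast \neq \emptyset\}.\]
The hypothesis $\rho^\ast > \delta$ yields some $x \in E$ with $\dist(x, E^c) > \delta$, whence $\bfB(x,\delta)\subset E$ and $x \in E^\ast$, so $\clQ$ is nonempty; boundedness of $E$ (implicit here, since otherwise no finite partition with a uniform diameter bound exists) makes $\clQ$ finite.

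Next I would verify two structural properties of $\clQ$: (i) every $Q \in \clQ$ is contained in $E$, and (ii) $E \subset \bigcup_{Q \in \clQ}\bfB(Q, \eta_\delta)$. For (i), picking $p\in Q\cap E^\ast$ gives $\bfB(p,\delta)\subset E$ by definition of the erosion, and since $\diam(Q) = \delta$ every point of $Q$ lies in $\bfB(p,\delta)\subset E$. For (ii), the inclusion $E^\ast \subset \bigcup_{Q\in\clQ} Q$ combined with $\dist(y, E^\ast) \le \eta_\delta$ for all $y\in E$ gives $\dist(y, \bigcup_{Q\in\clQ} Q)\le \eta_\delta$, and finiteness of $\clQ$ lets the infimum $\inf_{Q\in\clQ}\dist(y, Q)$ be attained at some $Q' \in \clQ$, yielding $y \in \bfB(Q', \eta_\delta)$.

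To build the partition, I would apply the same disjointification indexing as in the proof of Lemma~\ref{lem:opening}: order $\clQ = \{Q_k\}_{k=1}^M$ arbitrarily and set
\[R_k = \bigl[\bfB(Q_k, \eta_\delta)\cap E\bigr] \setminus \biggl[\,\bigcup_{j<k} R_j \,\cup\, \bigcup_{j>k} Q_j\biggr].\]
Property (ii) together with this construction gives that $\clR_E = \{R_k\}_{k=1}^M$ is a partition of $E$. Moreover $\interior(Q_k)\subset R_k$, because $R_j$ for $j<k$ was built to exclude $Q_k$ and cubes $Q_j$ with $j>k$ are interior-disjoint from $Q_k$; this yields the volume bound $|R_k| \ge \ell^\dmn = \delta^\dmn/\dmn^{\dmn/2}$. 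The containment $R_k \subset \bfB(Q_k, \eta_\delta)$ gives the diameter bound $\diam(R_k) \le \diam(Q_k) + 2\eta_\delta = \delta + 2\eta_\delta$. The only mild obstacle is the covering claim (ii), since the supremum defining $\eta_\delta$ need not be attained; finiteness of $\clQ$ resolves this, so the argument is otherwise a direct translation of the proof of Lemma~\ref{lem:opening}.
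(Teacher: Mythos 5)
Your proposal is correct and follows essentially the same route as the paper, which simply reuses the construction from Lemma~\ref{lem:opening} with the cube neighborhoods $\bfB(Q,\delta)$ replaced by $\bfB(Q,\eta_\delta)$; you have merely spelled out the details (nonemptiness of the erosion, containment of the cubes in $E$, the covering by $\eta_\delta$-neighborhoods, and the resulting bounds $|R|\ge \ell^\dmn$ and $\diam(R)\le \delta+2\eta_\delta$) that the paper leaves implicit. Your observation that boundedness of $E$ must be assumed for $\clQ$ to be finite is a fair and accurate reading of an omission in the statement.
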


\begin{proof}
  We get this result with the same construction we used in the proof of Lemma~\ref{lem:opening} modulo the collection of all neighborhoods $\bfB(\clQ, \eta_\delta) = \{\bfB(Q, \eta_\delta)\}_{Q \in \clQ}$, which we now need to cover $E$.
\end{proof}

\begin{remark}
  For any $\delta > 0$ satisfying the assumption of Lemma~\ref{lem:opening}, i.e., $\delta \in (0, \reach(E^c))$, we have that $\eta_\delta = \delta$. 
  In general, for the assumptions of Proposition~\ref{thm:opening2}, we only know that $\eta_\delta \geq \delta$. 
\end{remark}

We now apply Lemma~\ref{lem:opening} to answer the main question on the covering probability in Equation (\ref{eq:probability}).

\begin{theorem}\label{thm:reach}
  If $E$ is a bounded open subset of $\bbR^\dmn$ with $\reach(E^c) > \rho > 0$, then for any $\delta \in (0, \rho)$ there exists a natural number $M = M(\delta, E)$ such that for any independently and identically distributed sampling $\bbX_N = (X_i)_{i = 1}^N$ of $E$, 
  \begin{align}\label{eq:problowerbound}
    P(\bfB(\bbX_N, 3\delta) \supset E) \geq 1 - M\exp\left(\frac{-\delta^\dmn}{\dmn^{\dmn/2}|E|} N\right).
  \end{align}
\end{theorem}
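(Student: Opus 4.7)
The plan is to combine Lemma~\ref{lem:opening} with the elementary union-bound argument already sketched at the beginning of Section~\ref{sec:whitney} (leading up to Equation~(\ref{eq:probLB})), but now tracking the normalization by $|E|$ since the samples are uniform on $E$ rather than on a unit-measure set.

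First I would invoke Lemma~\ref{lem:opening} at the chosen scale $\delta \in (0,\rho)$ to obtain a finite partition $\clR_E = \{R_1, \dots, R_M\}$ of $E$ satisfying $|R_i| \geq \delta^\dmn/\dmn^{\dmn/2}$ and $\diam(R_i) \leq 3\delta$ for every $i$. I would then define $M = M(\delta, E) := |\clR_E|$; finiteness of $M$ is guaranteed by the lemma together with boundedness of $E$.

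The geometric observation driving the bound is that if $\bbX_N$ hits every partition element, then $\bfB(\bbX_N, 3\delta) \supset E$. Indeed, any $y \in E$ lies in some $R_i$, and if $X_j \in R_i$ as well, then $|y - X_j| \leq \diam(R_i) \leq 3\delta$, so $y \in \bfB(\bbX_N, 3\delta)$. Consequently
\[
P(\bfB(\bbX_N, 3\delta) \not\supset E) \;\leq\; P\!\left(\exists\, i : \bbX_N \cap R_i = \emptyset\right) \;\leq\; \sum_{i=1}^{M} P(\bbX_N \cap R_i = \emptyset),
\]
where the last step is just subadditivity.

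For each $i$, since the $X_j$ are i.i.d.\ uniform on $E$, $P(X_j \notin R_i) = 1 - |R_i|/|E|$, and independence yields $P(\bbX_N \cap R_i = \emptyset) = (1 - |R_i|/|E|)^N$. Using $1 - t \leq e^{-t}$ and the lower bound $|R_i| \geq \delta^\dmn/\dmn^{\dmn/2}$ from Lemma~\ref{lem:opening}, each summand is at most $\exp\!\left(-\delta^\dmn N/(\dmn^{\dmn/2}|E|)\right)$. Summing over the $M$ partition elements and taking the complement gives inequality~(\ref{eq:problowerbound}). There is essentially no hard step here: the whole content of the theorem is packed into Lemma~\ref{lem:opening}, and the remaining obstacle is only the cosmetic one of carrying the factor $1/|E|$ through the uniform-sampling computation, which the preliminary discussion at~(\ref{eq:probLB}) had suppressed by assuming $|E| = 1$.
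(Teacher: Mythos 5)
Your proposal is correct and follows essentially the same route as the paper's own proof: invoke Lemma~\ref{lem:opening} to get the partition, use the union bound together with $(1 - |R|/|E|)^N \leq \exp(-|R|N/|E|)$, and observe that hitting every partition element of diameter at most $3\delta$ forces $\bfB(\bbX_N, 3\delta) \supset E$. Your justification of that last geometric step (any $y \in R_i$ is within $\diam(R_i) \leq 3\delta$ of a sample landing in $R_i$) is in fact spelled out a bit more explicitly than in the paper.
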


\begin{proof}
  We prove the lower bound in Equation~(\ref{eq:problowerbound}) by applying Lemma~\ref{lem:opening} and elementary properties of probability theory. 

  Since $\delta < \rho$, and $\reach(E^c) > \rho$ implies $\reach(\partial E) > \rho$, by Lemma~\ref{lem:opening} we know that there exists a finite partition $\clR_E$ of $E$ such that 
  \[ 
  |R| \geq \frac{\delta^\dmn}{\dmn^{\dmn/2}}\qquad \text{and} \quad \diam(R) \leq 3\delta\quad \text{ for all } R\in \clR_E.
  \]
  Let $M$ be the cardinality of the partition $\clR_E$. 
  As in the introduction, for any sample point $X \in \bbX$ and partition element $R \in \clR_E$, we have that $P\{X \notin R\} = 1 - |R|/|E|.$
  Thus,  
  \begin{align*}
    P\{\bbX_N \cap R = \emptyset\} &= (1 - |R|/|E|)^N \\
    &\leq \exp\left(-\frac{|R|}{|E|}N\right)
  \end{align*}
  and hence
  \begin{align*}
    P\left(\bigcup_{R\in \clR}\{\bbX\cap R\} = \emptyset \right) &\leq \sum_{R\in\clR}P(\bbX\cap R = \emptyset)\\
    &\leq \sum_{R\in\clR} \exp\left(-\frac{|R|}{|E|}N\right)\\
    &\leq M\exp\left(-\frac{\inf_{R\in \clR} |R|}{|E|}N\right)\\
    &\leq M\exp\left(-\frac{\delta^\dmn}{\dmn^{\dmn/2}|E|}N\right).
  \end{align*}
  Finally, since $\diam(R) \leq 3\delta$, the closed ball $\bfB(X, 3\delta)$ around the realization of any sample $X \in \bbX_N$ will contain the partition element $R_X \in \clR_E$ that was hit by $X$.  
  Therefore, 
  \begin{align*}
    P(\bfB(\bbX_N, 3\delta) \supset E) &\geq 1 - M\exp\left(\frac{-\delta^\dmn}{\dmn^{\dmn/2}|E|}N\right).
  \end{align*}
\end{proof}

\section{Beyond Whitney}\label{sec:beyondwhitney}

It turns out that there are plenty of compact subsets $E$ which \emph{do} have nice partitionings $\clR_E$, but \emph{do not} have complements of positive reach nor are equal to openings of themselves with some $\bfU(0, \delta)$.  
For an example in Figure~\ref{fig:EminusA}, we remove a very small $\eta > 0$ amount of volume $A$ from the closed ball $F = \bfB(0, R)$, in such a way that $E^c = (F\setminus A)^c$ will have $0$ reach and $E\neq E\circ \bfU(0, \delta)$ for any $\delta > 0$.

We present Corollary~\ref{cor:UminusA} which states that one is able to get a similar lower bound to the probabilistic covering of sets $E$ that are obtained by removing a small volume from sets of positive reach.

\begin{figure}[htp!]
  \centering \includegraphics[width=.2\linewidth]{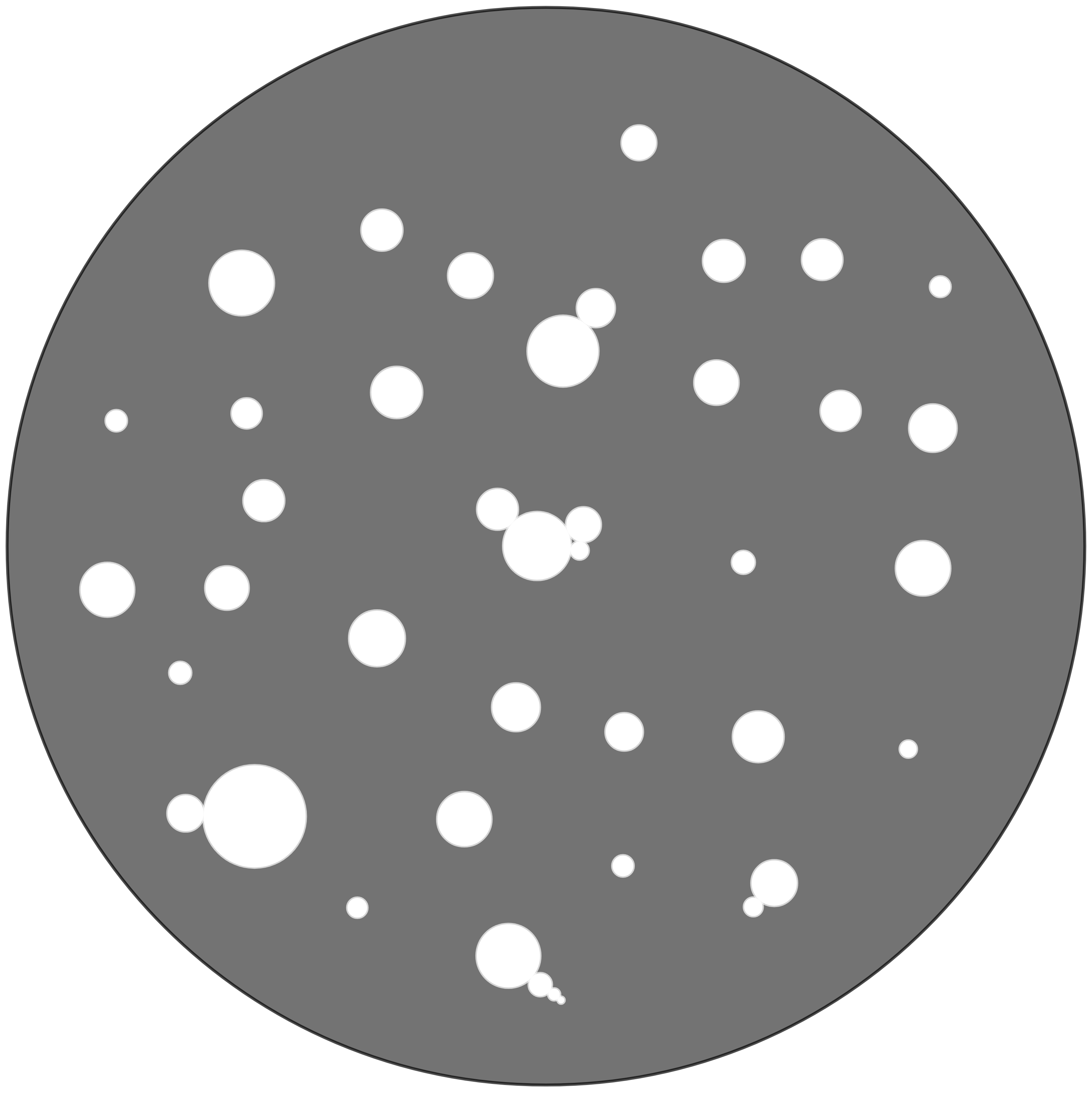}
  \caption{The set $E = \bfB(0, R)\setminus A$ such that $|A| < \epsilon$, $\reach(E^c) = 0$, and $E \neq E\circ \bfU(0, \delta)$ for any $\delta > 0$.    \label{fig:EminusA}
  }
\end{figure}

\begin{corollary}\label{cor:UminusA}
  Let $U$ be a bounded open subset of $\bbR^\dmn$ such that $\reach(U^c) > \rho > 0$, let $\delta \in (0, \rho)$, and let $A \subset U$ be such that $|A| < \delta^\dmn n^{-n/2}$.
  For $E = U\setminus A$, there exists a partition $\clR_E$ of $E$ such that for any finite i.i.d.~sampling $\bbX$ of $E$,
  \begin{align}\label{eq:problowerbound2}
    P(\bfB(\bbX, 3\delta) \supset E) \geq 1 - M\exp\left(\frac{-(\delta^\dmn n^{-n/2} - |A|)}{|E|} N\right),
  \end{align}
  where $N$ denotes the number of samples and $M$ denotes the cardinality of the partition $\clR_E$ of $E$.
\end{corollary}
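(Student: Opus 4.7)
The plan is to reduce the statement to a direct application of Lemma~\ref{lem:opening} on the ambient set $U$. Since $\reach(U^c) > \rho > \delta > 0$, Lemma~\ref{lem:opening} supplies a finite partition $\clR_U = \{R_1, \ldots, R_M\}$ of $U$ with
\[
|R_i| \geq \frac{\delta^\dmn}{\dmn^{\dmn/2}} \quad \text{and} \quad \diam(R_i) \leq 3\delta \quad \text{for every } i.
\]
I would then define the candidate partition of $E = U \setminus A$ by simply slicing $A$ out of each piece:
\[
\clR_E := \{\,R_i \setminus A : R_i \in \clR_U,\ R_i \setminus A \neq \emptyset\,\}.
\]

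The second step is to verify that $\clR_E$ inherits both key bounds. Pairwise disjointness is inherited from $\clR_U$, and $\bigcup_i (R_i \setminus A) = \bigl(\bigcup_i R_i\bigr) \setminus A = U \setminus A = E$, so $\clR_E$ is indeed a partition of $E$. Diameters only shrink under restriction, so $\diam(R_i \setminus A) \leq 3\delta$. For volumes, $|R_i \setminus A| \geq |R_i| - |R_i \cap A| \geq \delta^\dmn/\dmn^{\dmn/2} - |A|$, and the hypothesis $|A| < \delta^\dmn \dmn^{-\dmn/2}$ guarantees this floor is strictly positive. In particular no cell is entirely swallowed by $A$, so the cardinality of $\clR_E$ is at most (and one may take it to equal) the cardinality $M$ from Lemma~\ref{lem:opening}.

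The third step is to transcribe the probability argument from the proof of Theorem~\ref{thm:reach} with the new volume lower bound $\epsilon := \delta^\dmn \dmn^{-\dmn/2} - |A|$. For each $R' \in \clR_E$ and each sample $X_j \sim \mathrm{Unif}(E)$, one has $P(X_j \notin R') = 1 - |R'|/|E|$, so
\[
P(\bbX_N \cap R' = \emptyset) = \Bigl(1 - \tfrac{|R'|}{|E|}\Bigr)^N \leq \exp\!\Bigl(-\tfrac{\epsilon}{|E|} N\Bigr).
\]
A union bound over the at most $M$ elements of $\clR_E$ yields $P(\bbX_N \text{ misses some } R') \leq M \exp(-\epsilon N / |E|)$. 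On the complementary event every $R'$ is hit, and since any $y \in E$ lies in some $R' \in \clR_E$ with a sample point $X$ in the same $R'$ satisfying $|X - y| \leq \diam(R') \leq 3\delta$, that complementary event is contained in $\{\bfB(\bbX_N, 3\delta) \supset E\}$. This gives exactly the bound in~(\ref{eq:problowerbound2}).

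The only real obstacle is bookkeeping around the volume floor: I must be sure that carving $A$ out of every cell simultaneously does not destroy the uniform lower bound. The argument above uses the trivial estimate $|R_i \cap A| \leq |A|$ for each cell separately, which is wasteful if $A$ is concentrated in one cell, but it is precisely the right tool because the hypothesis $|A| < \delta^\dmn \dmn^{-\dmn/2}$ is stated with no information about how $A$ is distributed across $\clR_U$. Any attempt to sharpen the conclusion would require further structural assumptions on how $A$ sits inside $U$ (a direction explored later by Theorems~\ref{prop:flatnormsufficient} and~\ref{thm:fnreach}).
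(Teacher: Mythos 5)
Your proposal is correct and follows essentially the same route as the paper: apply Lemma~\ref{lem:opening} to $U$, restrict each cell to $E$ (your $R_i\setminus A$ is the same as the paper's $R_i\cap E$), use $|R_i\cap A|\leq |A|$ to get the volume floor $\delta^\dmn\dmn^{-\dmn/2}-|A|>0$, and rerun the probability argument of Theorem~\ref{thm:reach}. Your write-up is in fact slightly more careful than the paper's, which does not explicitly note that no cell can be entirely swallowed by $A$ or that diameters only shrink under restriction.
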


\begin{proof}
  Since $\reach(U^c) > \rho > 0$, and $0 < \delta < \rho$, by Lemma~\ref{lem:opening}, there exists a partition $\clR_U$ of $U$ such that
  \[|R| \geq \frac{\delta^{\dmn}}{\dmn^{\dmn/2}} \quad \text{ and } \quad \diam(R) \leq 3\delta \quad \text{ for all } R\in \clR_U.\]
  Since $E \subset U$, we may construct a partition $\clR_E$ of $E$ with 
  \[\clR_E = \{R\cap E : R \in \clR_U \}.\]
  By noticing that $\inf_{R' \in \clR_E} \clL^\dmn(R') \geq \inf_{R \in \clR_U}  |R| - |A| > 0$, we may apply the argument in Theorem~\ref{thm:reach} to $\clR_E$ and conclude that 
  \begin{align}\label{eq:problowerbound3}
    P(\bfB(\bbX, 3\delta) \supset E) \geq 1 - M\exp\left(\frac{-(\delta^\dmn n^{-n/2} - |A|)}{|E|} N\right),
  \end{align}
  where $N$ denotes the number of samples and $M$ denotes the cardinality of the partition $\clR_U$.
\end{proof}

\subsection{The multiscale flat norm and probabilistic covers in $\bbR^2$}\label{sec:flatnormR2}

Let $E$ be a set of finite perimeter in $\bbR^2$. 
Corollary \ref{cor:UminusA} states that if we know $E = U\setminus A$ where $U^c$ has positive reach, then we are in good shape.
However, if $E$ does not necessarily have this form, can we still say something? 
In this section we show that in $\bbR^2$, we have a method that allows us to obtain a lower bound on the probability that we ``almost'' cover all of $E$. 
This method involves the \emph{multiscale flat norm} (see Definition \ref{def:MFN}).

Applying the multiscale flat norm $\srF_\lambda$ with scale $\lambda > 0$ to $\partial E$ gives us a ``denoised'' version $\partial E - \partial S_\lambda$ of $E$ when $S_\lambda \in \arg \srF_\lambda(\partial E)$. 
In Theorem~\ref{thm: flatNormReach}, we prove that for sets $E$ of finite perimeter in the plane, we have $\reach(E_\lambda^c) > 1/(5\lambda)$ for $E_\lambda := E - S_\lambda$.
Since Lemma~\ref{lem:opening} applies to sets of positive reach, we investigate when the partition $\clR_{E_\lambda}$ of $E_\lambda$ may be used to give a good partition of $E$.

In general we cannot guarantee that the partition $\clR_{E_\lambda}$ can always be used to give a good partition of $E$. 
However, the following Theorem~\ref{prop:flatnormsufficient} does give us a sufficient condition for when the partition $\clR_{E_\lambda}$ of $E_\lambda$ may be used to give a \emph{good $\alpha$-almost partition} of $E$ (see Definition \ref{def:goodalphaalmostpartition}).
Intuitively, a \emph{good $\alpha$-almost partition} of $E$ is a ``large'' subset of $E$ with a good partition. 
The existence of a good $\alpha$-almost partition $E$ will be used to provide a lower bound for the probability that $\bfB(\bbX, 3\delta)$ covers a ``large'' subset of $E$.  

\begin{definition}\label{def:goodalphaalmostpartition}
  Let $E\subset \bbR^\dmn$ and $\alpha \in \bbR$.
  A partition $\clR_A$ of a set $A$ is called an $\boldsymbol{\alpha}$-{\bfseries almost partition} of $E$ if
  \[A \subset E \quad \text{ and } \quad |E\setminus A| \,\geq\, (1 - \alpha)|E| \, \geq 0.\]
  We say that $\clR_A$ is a {\bfseries good $\boldsymbol{\alpha}$}-{\bfseries almost partition} of $E$ if $\clR_A$ is a good partition of $A$ and if $\clR_A$ is an $\alpha$-almost partition of $E$.
\end{definition}

Before we state the main result, we consider the case when $\lambda$ is small enough so that $E_\lambda = 0$. 
Since we cannot use the empty set to form any good almost partition of $E$, we must require that $\lambda$ is large enough. 
In particular, we require that $\lambda > \Lambda_E$, where $\Lambda_E = \sup\{\lambda > 0 : E \in \arg \srF_{\lambda}(\partial E)\}$. 
Our sufficient condition says that for any $\lambda > \Lambda_E$ and $\delta > 0$ such that 
\[|S_\lambda| < \frac{\delta^2}{2}\quad \text{ and } \quad 0 < \delta < \frac{1}{5\lambda}\]
the partitions $\clR_{E_\lambda} = \clR^\delta_{E_\lambda}$ of $E - S_\lambda$ may indeed be used as to obtain a good $\alpha$-almost partition of $E$ for a particular $\alpha$ that depends on $\delta$.
The proof of Theorem~\ref{prop:flatnormsufficient} uses Lemmas~\ref{lem: 16},~\ref{lem:tangents-match},~\ref{lem: equal-tangents}, and Theorems~\ref{thm:one-circle}, and~\ref{thm: flatNormReach} from our previous work~\cite{alvarado2017lower}.
We reproduce these results here for the sake of completeness.
Recall the notion of outer-normal map $\mathbf{v}_\epsilon$ given in Definition \ref{def:outinormal}.

\begin{lemma}\label{lem: 16}
Let $\gamma : \mathbb{S}^1\to \bbR^2$ be a $C^1$ isometric embedding and suppose $n$ is differentiable at $s\in \mathbb{S}^1$ such that $|N'(s)| < K$. Then if $0 < \epsilon < 1/K$ then $\Tan( \Gamma, \gamma(s)) = \Tan(\Gamma_\epsilon, \mathbf{v}_\epsilon(s))$. 
\end{lemma}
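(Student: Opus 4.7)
The plan is to compute the one-sided derivative of $\mathbf{v}_\epsilon$ at $s$ directly and show that $\mathbf{v}_\epsilon$ is a local regular curve at $s$ whose tangent direction agrees with that of $\gamma$.

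First, since $\gamma$ is a $C^1$ isometric embedding, $\dot\gamma(s)$ is a unit vector, and because $\Gamma$ is a $C^1$ embedded curve at $\gamma(s)$ one has $\Tan(\Gamma,\gamma(s)) = \bbR\,\dot\gamma(s)$. Next, $N(s)$ is a unit vector orthogonal to $\dot\gamma(s)$ for every $s$; differentiating $|N|^2 = 1$ at the point $s$ where $N$ is differentiable gives $N'(s)\cdot N(s) = 0$, so $N'(s)$ lies in the line perpendicular to $N(s)$ in $\bbR^2$, which is exactly $\bbR\,\dot\gamma(s)$. Write $N'(s) = c\,\dot\gamma(s)$ with $|c| = |N'(s)| < K$.

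Now expand
\begin{align*}
  \mathbf{v}_\epsilon(s+h) - \mathbf{v}_\epsilon(s)
  &= \bigl(\gamma(s+h) - \gamma(s)\bigr) + \epsilon\bigl(N(s+h) - N(s)\bigr) \\
  &= h\,\dot\gamma(s) + \epsilon h\,N'(s) + o(h) \\
  &= h\,(1 + \epsilon c)\,\dot\gamma(s) + o(h).
\end{align*}
The hypothesis $\epsilon < 1/K$ combined with $|c| < K$ forces $|\epsilon c| < 1$, hence $1 + \epsilon c > 0$. In particular $\mathbf{v}_\epsilon'(s)$ exists and is a strictly positive scalar multiple of $\dot\gamma(s)$. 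Taking $h \to 0^+$ and $h \to 0^-$ along the definition of the tangent cone immediately gives $\pm\dot\gamma(s) \in \Tan(\Gamma_\epsilon,\mathbf{v}_\epsilon(s))$, so $\bbR\,\dot\gamma(s) \subset \Tan(\Gamma_\epsilon,\mathbf{v}_\epsilon(s))$.

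For the reverse inclusion I need that no sequence in $\Gamma_\epsilon$ converges to $\mathbf{v}_\epsilon(s)$ from a direction transverse to $\dot\gamma(s)$. Every such sequence is the image under $\mathbf{v}_\epsilon$ of some sequence $(s_k)$ in $\mathbb{S}^1$. Points $s_k$ that stay in a fixed neighborhood of $s$ contribute only directions in $\bbR\,\dot\gamma(s)$, by the expansion above together with $\mathbf{v}_\epsilon'(s) \neq 0$ (which also makes $\mathbf{v}_\epsilon$ locally injective near $s$). The main obstacle is therefore handling $s_k$ lying outside any neighborhood of $s$: such points can accumulate to $\mathbf{v}_\epsilon(s)$ only along a subsequence converging in $\mathbb{S}^1$ to some $s^\ast \neq s$ with $\mathbf{v}_\epsilon(s^\ast) = \mathbf{v}_\epsilon(s)$, i.e., a collision of the outer-normal map. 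In the intended use (and as is standard in the reach/injectivity-radius setting of Definition~\ref{def:outinormal}), the range $\epsilon < 1/K$ is chosen below the injectivity radius so that $\mathbf{v}_\epsilon$ is injective, ruling this case out. Once that is in place, the tangent cones coincide.
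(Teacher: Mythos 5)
Your computation is exactly the paper's: both proofs differentiate $|N|^2=1$ to get $N'(s)\perp N(s)$, hence $N'(s)=r\gamma'(s)$ with $|r|<K$, conclude $\mathbf{v}_\epsilon'(s)=(1+r\epsilon)\gamma'(s)$, and use $\epsilon<1/K$ to see the scalar factor is nonzero. Where you differ is in what you do with this derivative. The paper simply declares that ``all we must show is that $\mathbf{v}'(s)=c\gamma'(s)$ for some $c\neq 0$'' and stops there, implicitly identifying $\Tan(\Gamma_\epsilon,\mathbf{v}_\epsilon(s))$ with the span of the velocity vector. You unpack the sequential definition of the tangent cone and correctly observe that the forward inclusion $\bbR\,\dot\gamma(s)\subset\Tan(\Gamma_\epsilon,\mathbf{v}_\epsilon(s))$ follows from the expansion, while the reverse inclusion needs an extra input: if $\mathbf{v}_\epsilon$ fails to be injective, points of $\Gamma_\epsilon$ coming from parameters far from $s$ could accumulate at $\mathbf{v}_\epsilon(s)$ and contribute transverse tangent directions. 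That caveat is real --- the hypotheses of the lemma as stated (a pointwise bound $|N'(s)|<K$ at the single parameter $s$, plus $\epsilon<1/K$) do not by themselves rule out such a collision, and the paper's proof is silent on it. In the paper's actual use (Lemma~\ref{lem:tangents-match} and the reach/injectivity-radius setting) $\epsilon$ is taken below the injectivity radius, which is exactly the resolution you point to, so your version is a more honest account of the same argument rather than a different one.
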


\begin{proof}
Since we are trying to show equality of our tangent spaces, all we must show is that $\mathbf{v}'(s) = c\gamma'(s)$ for some $c \neq 0$. 

Let $N$ be differentiable at $s \in \mathbb{S}^1$. 
Since $\norm{N} = 1$ on $\mathbb{S}^1$, $\langle N'(s),N(s)\rangle = 0$.  
Hence, since $\langle \gamma'(s), N(s)\rangle = 0$ there exists an $r\in\mathbb{R}$ for which $N'(s) = r\gamma'(s)$.
Thus $\mathbf{v}'(s) = \gamma'(s)(1 + r\epsilon)$. 
We must therefore show that $1 + r\epsilon \neq 0$; which reduces to showing that $\epsilon \neq 1/r$. 

Since $|N'(s)| < K$, we get that $|r| < K$. 
Moreover since $0 < \epsilon < 1/K$, we get that $\epsilon < 1/|r|$ and hence 
$-1/|r| < \epsilon < 1/|r|$; implying that $\epsilon \neq -1/r$. 
\end{proof}

\begin{lemma}\label{lem:tangents-match}
If $\Gamma$ is a $C^{1,1}$ embedding of $\mathbb{S}^1$ into $\mathbb{R}^2$ and $\epsilon < 1/K$, then $\Tan(\Gamma, \gamma(s)) = \Tan(\Gamma_\epsilon, \mathbf{v}_\epsilon(s))$ for all $s \in \mathbb{S}^1$. 
\end{lemma}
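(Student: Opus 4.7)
The plan is to bootstrap from Lemma \ref{lem: 16} using absolute continuity to handle the exceptional points where $N$ fails to be differentiable. Because $\Gamma$ is $C^{1,1}$, the derivative $\gamma'$ is $K$-Lipschitz, and hence so is $N$ (the $90^\circ$-rotation of $\gamma'$). Rademacher's theorem then gives differentiability of $N$ at almost every $s \in \mathbb{S}^1$, with $|N'(s)| \leq K$. At every such point, Lemma \ref{lem: 16} (applied with any $K' \in (K, 1/\epsilon)$) already yields $\Tan(\Gamma, \gamma(s)) = \Tan(\Gamma_\epsilon, \mathbf{v}_\epsilon(s))$, so only the negligible set of non-differentiability points needs further work.

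Fix such an exceptional $s$. The key observation is that the argument in the proof of Lemma \ref{lem: 16} actually shows $N'(\tau) = r(\tau)\gamma'(\tau)$ for some scalar $r(\tau)$ with $|r(\tau)| \leq K$, at \emph{every} point of differentiability $\tau$. Lipschitzness of $\gamma$ and $N$ makes $\mathbf{v}_\epsilon$ absolutely continuous, so for any $t$ sufficiently close to $s$,
\[\mathbf{v}_\epsilon(t)-\mathbf{v}_\epsilon(s) \;=\; \int_s^t \bigl(1+\epsilon r(\tau)\bigr)\gamma'(\tau)\,d\tau.\]
Rewriting this as $\gamma'(s)\int_s^t(1+\epsilon r(\tau))\,d\tau$ plus an error controlled by the modulus of continuity of $\gamma'$, one sees that the coefficient of $\gamma'(s)$ lies in an interval of the form $[(1-\epsilon K)(t-s),(1+\epsilon K)(t-s)]$, bounded away from $0$ because $\epsilon K<1$, while the error term is $o(|t-s|)$. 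Normalizing, $(\mathbf{v}_\epsilon(t)-\mathbf{v}_\epsilon(s))/|\mathbf{v}_\epsilon(t)-\mathbf{v}_\epsilon(s)|$ tends to $\gamma'(s)$ as $t\to s^{+}$ and to $-\gamma'(s)$ as $t\to s^{-}$, giving the inclusion $\Tan(\Gamma,\gamma(s))=\{\lambda\gamma'(s):\lambda\in\bbR\}\subseteq \Tan(\Gamma_\epsilon,\mathbf{v}_\epsilon(s))$.

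For the reverse inclusion, any $u\in \Tan(\Gamma_\epsilon,\mathbf{v}_\epsilon(s))$ arises as the limit of unit directions along some sequence $q_k=\mathbf{v}_\epsilon(t_k)\to\mathbf{v}_\epsilon(s)$. Since $\epsilon<1/K$ lies below the injectivity radius of $\gamma$ (the $C^{1,1}$ bound on curvature lower-bounds the injectivity radius by $1/K$), the map $\mathbf{v}_\epsilon$ is a homeomorphism onto $\Gamma_\epsilon$, forcing $t_k\to s$; then the same integral computation pins the limit direction to $\pm\gamma'(s)$. The main obstacle is precisely the lack of a classical derivative of $\mathbf{v}_\epsilon$ at exceptional $s$, where the pointwise argument of Lemma \ref{lem: 16} breaks down; the absolute-continuity representation paired with the structural identity $N'\parallel\gamma'$ holding a.e.\ is what salvages the computation there.
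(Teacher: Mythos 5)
Your proof is correct in substance and reaches the conclusion by a genuinely different route from the paper's. Both arguments start identically: Rademacher gives differentiability of $N$ almost everywhere with $|N'|\le K$, and Lemma~\ref{lem: 16} settles those points (your device of applying it with $K'\in(K,1/\epsilon)$ correctly handles the non-strict bound). The divergence is at the exceptional points. The paper shows that $\Gamma_\epsilon$ is locally a $C^1$ curve --- via Lucas's positive-reach result and the Krantz--Parks theorem that the signed distance to a $C^{1,1}$ hypersurface is $C^1$ on a tubular neighborhood, so that $\Gamma_\epsilon$ is locally a $C^1$ level set --- and then transfers the a.e.\ equality of tangents to every point by continuity of the tangent line along a sequence of good parameters. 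You instead compute directly: $\mathbf{v}_\epsilon$ is Lipschitz, hence absolutely continuous, and the a.e.\ identity $\mathbf{v}_\epsilon'=(1+\epsilon r)\gamma'$ with $|r|\le K$ lets you show that the normalized difference quotients of $\mathbf{v}_\epsilon$ converge to $\pm\gamma'(s)$ at every $s$, exceptional or not; since $\gamma'$ is Lipschitz the error term is even $O(|t-s|^2)$. Your argument is more elementary and self-contained (no level-set regularity is cited) and yields the quantitative bi-Lipschitz bound $|\mathbf{v}_\epsilon(t)-\mathbf{v}_\epsilon(s)|\ge c|t-s|$ locally; the paper's route buys the stronger structural fact that $\Gamma_\epsilon$ is itself locally $C^1$.

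One caveat on your reverse inclusion: the claim that the curvature bound forces the injectivity radius of $\gamma$ to be at least $1/K$ is false in general. The injectivity radius (equivalently, the reach) is a global quantity and can be destroyed by two parameter-distant arcs of $\Gamma$ passing within $2\epsilon$ of each other while the curvature stays small, so $\mathbf{v}_\epsilon$ need not be injective under the stated hypotheses, and points $\mathbf{v}_\epsilon(t_k)$ with $t_k$ far from $s$ could a priori contribute extra directions to $\Tan(\Gamma_\epsilon,\mathbf{v}_\epsilon(s))$. The paper's own proof quietly sidesteps the same issue by replacing $\Gamma_\epsilon$ with its restriction to a small parameter window, so you are no worse off than the source; but to get the lemma as literally stated you should take injectivity of $\mathbf{v}_\epsilon$ (i.e., $\epsilon$ below the actual injectivity radius, which does hold in the paper's applications where $\Gamma$ is a flat norm minimizer of reach at least $\hat{C}/\lambda$) as an explicit hypothesis rather than deriving it from $\epsilon<1/K$.
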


\begin{proof}
Since we will prove a local condition on $\Gamma$ and $\Gamma_\epsilon$, we may instead investigate $\mathbf{v}_\epsilon\vert_E$ for sufficiently small open sets $E$ about points in $\mathbb{S}^1$. That is, $\Gamma\vert_E$ and $\Gamma_\epsilon\vert_E$ instead of $\Gamma$ and $\Gamma_\epsilon$ respectively. We will first argue that $\Gamma_\epsilon\vert_E$ is a $C^1$ hypersurface, and then show that $\Tan(\Gamma\vert_E, \gamma(s)) = \Tan(\Gamma_\epsilon\vert_E, \mathbf{v}_\epsilon(s))$ for all $s \in E$. 

Since $\Gamma\vert_E$ is $C^{1,1}$, reach$(\Gamma\vert_E) >$ 0 \cite{lucas1957submanifolds}; so by a result of Krantz and Parks \cite{krantz1981distance} we get that the signed distance function $\delta(x)$ to $\Gamma\vert_E$ is a $C^1$ function on the open $\epsilon$-neighborhood $\bfU(\Gamma\vert_E, \epsilon)$ of $\Gamma\vert_E$. Hence the $\epsilon$-level-set $L_\epsilon (\delta)$ is also $C^1$; and since $L_\epsilon(\delta) = \Gamma_\epsilon\vert_E$, we get that $\Gamma_\epsilon\vert_E$ is also $C^1$. 

Now since $\Gamma$ is $C^{1,1}$, $n$ is Lipschitz and therefore differentiable a.e.~in $\mathbb{S}^1$. 
Hence by Lemma~\ref{lem: 16},
$\Tan(\Gamma, \gamma(s)) = \Tan(\Gamma_\epsilon, \alpha(s))$ for a.e.~$s\in \mathbb{S}^1$. 
Therefore if we pick $x \in \mathbb{S}^1$ such that $N$ is not differentiable at $x$, there exists a sequence $\{x_i\}_{i=1}^\infty$ in $E$ converging to $x$ for which $\Tan(\Gamma, \gamma(x_i)) = \Tan(\Gamma_\epsilon, \mathbf{v}_\epsilon(x_i))$ for all $i$. 
This, together with the fact that $\Gamma_\epsilon\vert_E$ is $C^1$, implies that $T(\Gamma_\epsilon, \mathbf{v}_\epsilon(x)) = \Tan(\Gamma, \gamma(x))$.
\end{proof}

\begin{lemma}\label{lem: equal-tangents}
  Let $\gamma: \mathbb{S}^1 \to \bbR^2$ and let $\Gamma = \gamma(\mathbb{S}^1)$. If $\gamma$ is $C^{1,1}$, then there exist distinct points $v, v' \in \mathbb{S}^1$ such that $\mathbf{v}_\rho(v) = \mathbf{v}_\rho(v')$ where $\reach(\Gamma) > \rho > 0$. 
  Moreover, $\Tan(\Gamma, \gamma(v)) = \Tan(\Gamma, \gamma(v'))$.
\end{lemma}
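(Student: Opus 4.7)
The plan is to exhibit the desired pair $(v,v')$ by a degree / intermediate-value argument on the outer-normal map, and then to read off both the collision relation $\mathbf{v}_\rho(v) = \mathbf{v}_\rho(v')$ and the tangent-space equality from the construction. The $C^{1,1}$ hypothesis ensures that the outer unit normal $N : \mathbb{S}^1 \to \mathbb{S}^1$ is well-defined and Lipschitz, and (by the Federer--Lytchak result recalled after Definition~\ref{def:reach}) guarantees $\reach(\Gamma) > 0$, which is the source of the $\reach(\Gamma) > \rho > 0$ range in the conclusion. My guiding picture is that $\Gamma$, being a simple closed curve in $\bbR^2$, forces its outer-normal field to take every direction on $\mathbb{S}^1$ and in particular to attain antipodal pairs of values, which is where the two-point collision comes from.

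First, I would establish that $N : \mathbb{S}^1 \to \mathbb{S}^1$ is a continuous surjection of topological degree $\pm 1$; this is standard for a simple closed planar curve. For each direction $u \in \mathbb{S}^1$ this produces $v, v' \in \mathbb{S}^1$ with $N(v) = u$ and $N(v') = -u$, necessarily distinct since $\gamma$ is an embedding. Second, I would track the signed perpendicular offset $\sigma(u)$ between the two oppositely directed normal rays from $\gamma(v)$ and $\gamma(v')$ as a continuous function of $u$. Since the roles of $v$ and $v'$ swap when $u$ is replaced by $-u$, the function $\sigma$ satisfies $\sigma(-u) = -\sigma(u)$, so by intermediate value it vanishes at some $u_0$; at $u_0$ the two rays lie along one common line. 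Third, at $u_0$ the rays point toward each other along that line, meeting at the midpoint of $\gamma(v)$ and $\gamma(v')$; setting $\rho = \tfrac{1}{2}\lvert \gamma(v) - \gamma(v')\rvert$ yields $\mathbf{v}_\rho(v) = \mathbf{v}_\rho(v')$ by construction, and this $\rho$ is strictly positive since $\gamma$ is injective. The tangent-space equality is then automatic: $N(v) = -N(v')$ forces the one-dimensional tangent spaces, each being the orthogonal complement in $\bbR^2$ of the corresponding normal direction, to coincide.

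The main obstacle is placing the midpoint radius $\rho$ strictly inside the interval $(0, \reach(\Gamma))$ required by the hypothesis, rather than at the critical value $\rho = \reach(\Gamma)$ (which is what one gets on a round circle, where every antipodal pair's rays meet at the centre). I would address this by invoking the characterization of $\reach(\Gamma)$ as the injectivity radius of $\gamma$ from the Remark following Definition~\ref{def:outinormal}: restricting the selection of antipodal pairs to those whose midpoint lies strictly inside the reach-tube about both $\gamma(v)$ and $\gamma(v')$, one obtains $\rho < \reach(\Gamma)$. If the only antipodal pairs available produce exactly $\rho = \reach(\Gamma)$, then $\Gamma$ is forced into a highly symmetric configuration (essentially a circle of radius $\reach(\Gamma)$), and a one-parameter perturbation of $u$ near $u_0$, combined with Lemma~\ref{lem:tangents-match} to transfer tangent information between $\Gamma$ and the parallel curve $\Gamma_\rho$, yields a nearby antipodal pair whose normal rays meet at a radius strictly less than $\reach(\Gamma)$. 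The final tangent-space identity $\Tan(\Gamma, \gamma(v)) = \Tan(\Gamma, \gamma(v'))$ is then read off from step one of the construction.
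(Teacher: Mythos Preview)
Your construction has two genuine gaps. First, the claim that the outer-normal rays at an antipodal pair $N(v)=-N(v')$ ``point toward each other'' is false in general: for any convex curve (in particular the round circle you invoke) the outer normals at antipodal directions point \emph{away} from each other, so $\mathbf{v}_\rho(v)$ and $\mathbf{v}_\rho(v')$ never coincide for any $\rho>0$. Your picture is really that of the \emph{inner} normal map $\bar{\mathbf{v}}_\rho$, and the lemma (as used downstream) allows either one---but nothing in your argument distinguishes the two cases or justifies that the relevant rays are oppositely oriented along the common line. Second, your intermediate-value function $\sigma(u)$ presupposes a continuous selection $u\mapsto(v(u),v'(u))$ of preimages of $u$ and $-u$ under $N$; for non-convex $\Gamma$ the Gauss map is not injective, and no such continuous selection need exist, so $\sigma$ need not be continuous and the IVT step fails.

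There is also a target-level issue: you spend the ``main obstacle'' paragraph trying to force $\rho$ strictly below $\reach(\Gamma)$, but by the very Remark you cite (reach $=$ injectivity radius of $\mathbf{v}_\epsilon$ and $\bar{\mathbf{v}}_\epsilon$), both normal maps are injective for every $\rho<\reach(\Gamma)$, so a collision $\mathbf{v}_\rho(v)=\mathbf{v}_\rho(v')$ with $v\neq v'$ is \emph{impossible} in that range. The paper in fact takes $\rho$ to be the reach itself: compactness of $\Gamma$ then forces a first collision $\mathbf{v}_\rho(v)=\mathbf{v}_\rho(v')$ (or the inner analogue), and the substantive work is proving the tangent equality at that specific pair. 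The paper does this by contradiction: if the tangent lines at $\gamma(v)$ and $\gamma(v')$ differed, one slides along the segment between nearby points of $\Gamma_\rho$ and, using continuity of the local distance functions $\delta_v,\delta_{v'}$, finds a point $z$ with $\delta_v(z)=\delta_{v'}(z)<\rho$ having two distinct nearest points on $\Gamma$, contradicting the unique-projection property inside the reach. Your degree/IVT route, by contrast, tries to manufacture tangent equality first and the collision second; even if the gaps above were patched, it would produce some $(v,v',\rho)$ unrelated to the reach, which is not what the downstream Theorem~\ref{thm:one-circle} needs.
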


\begin{proof}
  First and foremost, we have to set everything up. Let $\Gamma$ have positive reach $\rho$. Since $\Gamma$ is compact, there exist distinct $v, v' \in \mathbb{S}^1$ for which $\mathbf{v}_\rho(v) = \mathbf{v}_\rho(v')$ or $\bar{\mathbf{v}}_\rho(v) = \bar{\mathbf{v}}_\rho(v')$. Without loss of generality, let us assume $\mathbf{v}_\rho(v) = \mathbf{v}_\rho(v')$. 
  Therefore, for the following arguments, we may use the distance function on subsets $U\subset \Gamma$
\begin{align*}
 \delta_U(x) = \mathrm{dist}(x, U).   
\end{align*}

For any $\epsilon > 0$, consider the two intervals $V_\epsilon = (v - \epsilon, v + \epsilon)$, and $V'_\epsilon = (v' - \epsilon, v' + \epsilon)$ around $v$ and $v'$, and the subsets $\Gamma_{v,\epsilon} := \mathbf{v}_\rho(V_\epsilon)$ and $\Gamma_{v',\epsilon} := \mathbf{v}_\rho(V'_\epsilon)$ of $\Gamma$.
For notational convenience, let $\delta_v := \delta_{\Gamma_{v,\epsilon}}$, $\delta_{v'} := \delta_{\Gamma_{v',\epsilon}}$.

Notice that by the equality of injectivity radius and reach, for any $\sigma \leq \rho$ and any $x \in \mathbf{v}[(0, \sigma)\times V_\epsilon]$ there exists an $s \in \Gamma_v$ such that $\norm{x-s} = \delta(x)$, and hence $\delta_{v}(x) = \delta(x)$. Similarly, for any $x \in \mathbf{v}[(0, \sigma)\times V'_\epsilon]$ there exists an $s' \in \Gamma_{v'}$ for which $\norm{x - s'} = \delta(x)$ and hence $\delta_{v'}(x) = \delta(x)$.

By way of contradiction, suppose the tangent spaces are not equal. 
Therefore, $E_\epsilon := \mathbf{v}[[0,\rho]\times V_\epsilon]\cap \mathbf{v}[[0,\rho]\times V'_\epsilon]$  will have a non-empty interior, and hence there exists $\nu_2 > 0$ such that given any $0 < \epsilon < \nu_1$, for any point $x \in \mathbf{v}_\rho(\mathbf{v}|_{V}^{-1}(E_\epsilon))$, $\delta_{v}(x) > \delta_{v'}(x)$, and for any point $x \in \mathbf{v}_\rho(\mathbf{v}|_{V'}^{-1}(E_\epsilon))$, $\delta_{v}(x) < \delta_{v'}(x)$. Therefore, if we define a new function $\tilde{\delta}: \mathbb{R}^2 \to \mathbb{R}$ as 
 \begin{equation*}
  \tilde{\delta}(x) := \delta_{v}(x) - \delta_{v'}(x),   
 \end{equation*}
we find that $\tilde{\delta}(x)$ is positive whenever $x \in \mathbf{v}_\rho(\mathbf{v}|_{V}^{-1}(E_\epsilon))$ and negative whenever $x \in \mathbf{v}_\rho(\mathbf{v}|_{V'}^{-1}(E_\epsilon))$.

Now, let $\hat{x} \in \mathbf{v}_\rho(\mathbf{v}|_{V}^{-1}(E_\epsilon))$ and $\hat{x}' \in \mathbf{v}_\rho(\mathbf{v}|_{V'}^{-1}(E_\epsilon))$. 
Since $\tilde{\delta}$ is continuous, there exists $r \in (0,1)$ such that for $z := r\hat{x} + (1-r)\hat{x}'$, $\tilde{\delta}(z) = 0$, and hence $\delta_v(z) = \delta_{v'}(z)$.

 We let $s$ and $s'$ in $\Gamma_v$ and $\Gamma_{v'}$, respectively, such that $\delta_{v'}(z) = \norm{s' - z} = \delta(z) = \norm{s - z} = \delta_{v}(z) < \rho$. 
 Since there exists a $\nu_2 > 0$ such that for any $0 < \epsilon < \nu_2$, $\Gamma_{v,\epsilon} \cap \Gamma_{v', \epsilon} = \emptyset$, by letting $\epsilon < \min{\{\nu_1, \nu_2\}}$, we get a contradiction to $z$ having a unique closest point in $\Gamma$. 
\end{proof}

\begin{remark}
  At first glance, the proof might appear to not use the fact that
  curvature of $\Gamma$ is uniformly bounded above by something
  strictly less than $1/\rho$. However, we do so
    indeed---we need this condition in order for
  $\Tan(\alpha(V_\epsilon, \rho), \alpha(v, \rho))$ and
  $\Tan(\alpha(V'_\epsilon, \rho), \alpha(v', \rho))$ to be tangent
  lines for small enough $\epsilon$. This is implied by
  Lemma~\ref{lem:tangents-match}.
\end{remark}


\begin{lemma}
Given $\Gamma = T - \partial S$, if there exists a current $S^\ast$ for which $\mathbf{M}(\Gamma) > \mathbf{M}(\Gamma - \partial S^\ast) + \lambda\mathbf{M}(S^\ast)$ then $\mathbf{M}(\Gamma) + \lambda\mathbf{M}(S) > \mathbf{M}(T - \partial(S + S^\ast)) + \lambda\mathbf{M}(S + S^\ast)$.
\label{thm:lem-1}
\end{lemma}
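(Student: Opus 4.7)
The plan is to observe that this is essentially a short manipulation using the linearity of the boundary operator and the subadditivity of mass, so no deep machinery is needed.

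The key identity is that $\Gamma - \partial S^\ast = (T - \partial S) - \partial S^\ast = T - \partial(S + S^\ast)$, which immediately rewrites the hypothesis as
\[
\mathbf{M}(\Gamma) \,>\, \mathbf{M}(T - \partial(S + S^\ast)) + \lambda\,\mathbf{M}(S^\ast).
\]
From here I would simply add $\lambda\,\mathbf{M}(S)$ to both sides to obtain
\[
\mathbf{M}(\Gamma) + \lambda\,\mathbf{M}(S) \,>\, \mathbf{M}(T - \partial(S + S^\ast)) + \lambda\bigl(\mathbf{M}(S) + \mathbf{M}(S^\ast)\bigr).
\]
The last step is to invoke the subadditivity (triangle inequality) of the mass norm on currents, namely $\mathbf{M}(S + S^\ast) \leq \mathbf{M}(S) + \mathbf{M}(S^\ast)$, so that the right-hand side is at least $\mathbf{M}(T - \partial(S + S^\ast)) + \lambda\,\mathbf{M}(S + S^\ast)$, which yields the desired conclusion.

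There is really no main obstacle here, since mass is a norm on the space of currents (see Federer~\cite{federer-1969-1}, 4.1.7) and hence both the subadditivity and the linearity of $\partial$ are standard; the only thing to double-check is that the strict inequality is preserved, and it is, because we only add equal quantities to both sides and then weaken the right-hand side via a non-strict inequality. The statement is really just a bookkeeping lemma: it says that if some candidate $S^\ast$ beats the trivial choice (doing nothing) for the flat-norm-like functional applied to $\Gamma$, then the combined current $S + S^\ast$ likewise beats $S$ when the functional is applied to $T$. This is exactly the kind of monotonicity one wants when iterating improvements to flat norm minimizers, and I would expect it to be used later to argue that a putative minimizer $S$ cannot be improved locally.
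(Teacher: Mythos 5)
Your proof is correct and uses exactly the same ingredients as the paper's: the identity $\Gamma - \partial S^\ast = T - \partial(S+S^\ast)$ from linearity of $\partial$, the subadditivity $\mathbf{M}(S+S^\ast)\leq \mathbf{M}(S)+\mathbf{M}(S^\ast)$, and adding $\lambda\,\mathbf{M}(S)$ to the hypothesis; the paper merely arranges these as a single chain of inequalities starting from $\mathbf{M}(\Gamma)+\lambda\mathbf{M}(S)$. There is no substantive difference between the two arguments.
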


\begin{proof}
\begin{align*}
\mathbf{M}(\Gamma) + \lambda\mathbf{M}(S) &= \mathbf{M}(T - \partial S) + \lambda\mathbf{M}(S)\\
&> \mathbf{M}(\Gamma - \partial S^\ast) + \lambda \mathbf{M}(S^\ast) + \lambda\mathbf{M}(S)\\
&\geq \mathbf{M}(\Gamma - \partial S^\ast) + \lambda\mathbf{M}(S^\ast + S)\\
&\geq \mathbf{M}(T-\partial S - \partial S^\ast) + \lambda\mathbf{M}(S^\ast + S)\\
&\geq \mathbf{M}(T - \partial (S + S^\ast)) + \lambda\mathbf{M}(S^\ast + S)
\end{align*}
\end{proof}

\begin{theorem}
\label{thm:one-circle} 
If $\Gamma_\lambda$ is a $C^{1,1}$ isometric embedding of $\mathbb{S}^1$ into $\mathbb{R}^2$, then the reach of $\Gamma_\lambda$ is bounded below by $C/\lambda$, where $C \approx 0.22$.
\end{theorem}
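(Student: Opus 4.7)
I will argue by contradiction against the flat-norm minimality implicit in the subscript $\lambda$. Suppose $\reach(\Gamma_\lambda) = \rho$ with $\rho\lambda < C$, where $C$ will be extracted from an explicit optimization below. Since $\Gamma_\lambda$ is $C^{1,1}$ and compact, Lemma~\ref{lem: equal-tangents} produces two distinct parameters $v \neq v' \in \mathbb{S}^1$ with $\mathbf{v}_\rho(v) = \mathbf{v}_\rho(v') =: p$ and $\Tan(\Gamma_\lambda, \gamma(v)) = \Tan(\Gamma_\lambda, \gamma(v'))$. Writing $\gamma(v) - \gamma(v') = \rho(N(v') - N(v))$ and combining with the equal-tangent condition forces $N(v') = -N(v)$, so that $\gamma(v)$ and $\gamma(v')$ sit antipodally on $\partial\bfB(p,\rho)$, tangent from outside to this disk along a common line through $p$. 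I normalize coordinates so that $p = 0$, $\gamma(v) = (0,-\rho)$, $\gamma(v') = (0,\rho)$, with common tangent horizontal.

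Next, I construct an explicit flat-norm competitor $S^\ast$ localized in a narrow vertical strip $\{|x|\leq w\}$ joining the two antipodal tangency points. Let $S^\ast$ be the oriented $2$-current whose support is the ``column'' bounded on the sides by $\{|x|=w\}$ and on top and bottom by the two arcs of $\Gamma_\lambda$ pinched off within that strip. By the curvature bound $\kappa \leq 1/\rho$ enforced by the reach hypothesis, together with the horizontal tangencies at $(0,\mp\rho)$ and the local one-sided containment outside $\bfU(0,\rho)$, each pinched arc is sandwiched between the tangent line and the worst-case circular arc of radius $\rho$. This comparison yields elementary lower bounds on the pinched-off arc length $\mathbf{M}(\Gamma_\lambda) - \mathbf{M}(\Gamma_\lambda - \partial S^\ast)$ and matching upper bounds on the column area $\mathbf{M}(S^\ast)$, each as a function of $w/\rho$.

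Invoking Lemma~\ref{thm:lem-1} with the parent minimizer $S_\lambda$ and the candidate $S_\lambda + S^\ast$, minimality of $S_\lambda$ forces
\[
\mathbf{M}(\Gamma_\lambda) - \mathbf{M}(\Gamma_\lambda - \partial S^\ast) \;\leq\; \lambda\, \mathbf{M}(S^\ast)
\]
for every admissible $S^\ast$. Substituting the geometric estimates from the previous step and optimizing over the free parameter $w \in (0,\rho]$ produces a dimensionless inequality of the form $\rho\lambda \geq C$, with the resulting constant evaluating numerically to $C \approx 0.22$.

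The hard step is the sharpness of the geometric estimates. A priori, $\Gamma_\lambda$ could have very low local curvature near $\gamma(v)$ and $\gamma(v')$ and still realize $\reach(\Gamma_\lambda) = \rho$ purely through the global double-closest-point condition; a purely local cap surgery would then save no perimeter at all. Ruling this out requires either a perturbation argument that coerces some curvature along the pinched arcs, or choosing $w$ large enough that integrated curvature along $\Gamma_\lambda$ between the two tangency points is forced by the $C^{1,1}$ structure. The trade-off in this choice is what produces the specific constant $0.22$ rather than a naive first-order value. A subsidiary technical point is verifying that $S^\ast$ is a genuine rectifiable $2$-current with the intended orientation, which amounts to checking that $\Gamma_\lambda$ meets each vertical side $\{|x|=w\}$ transversely in exactly one point near each antipodal tangency, a fact that follows from the $C^{1,1}$ regularity once $w$ is chosen within the local injectivity radius of $\mathbf{v}_\rho$ around $v$ and $v'$.
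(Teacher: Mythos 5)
Your overall strategy is the same as the paper's: use Lemma~\ref{lem: equal-tangents} to produce two antipodal tangency points $\gamma(v),\gamma(v')$ at distance $2\rho$ with a common tangent direction, pinch the two nearby arcs off with a thin ``column'' competitor $S^\ast$, and invoke Lemma~\ref{thm:lem-1} to contradict minimality. The setup and normalization are fine. But there is a genuine gap in the quantitative step, and it is fatal as written. You control the pinched arcs using the curvature bound $\kappa\le 1/\rho$ ``enforced by the reach hypothesis'' and accordingly restrict the strip half-width to $w\in(0,\rho]$. With that restriction the surgery can never win: the length removed is at most the total length of two graphs over $[-w,w]$, which is bounded below by $4w$ but the length you must \emph{add} back in the two vertical crossbars is already at least $2\cdot 2\rho=4\rho\ge 4w$ (each crossbar spans the gap of width $2\rho$ between the two tangent lines), before even paying the area term $\lambda\,\mathbf{M}(S^\ast)$. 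So the net change in energy is never negative and no contradiction is reached; optimizing over $w\le\rho$ produces nothing, let alone the constant $0.22$.

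The missing ingredient is Allard's regularity theorem for flat norm minimizers (cited in the paper as \cite{allard-2007-1}): the curvature of $\Gamma_\lambda$ is defined a.e.\ and bounded above by $\lambda$, \emph{not} merely by $1/\rho$. In the contradiction regime $\rho<C/\lambda$ one has $1/\rho>\lambda/C>\lambda$, so the reach-based bound is strictly weaker, and it is the bound $\kappa\le\lambda$ that lets the strip half-width range over all of $(0,1/\lambda)$ with the arcs confined between their tangent lines and circles of radius $1/\lambda$ (deviation $y(x)=\sqrt{\lambda^{-2}-x^2}-\lambda^{-1}$). The paper's optimization then lands at $x'=\lambda^{-1}\cos\theta'\approx 0.487/\lambda$, well beyond $\rho\approx 0.22/\lambda$, which is exactly where the saved length $4x$ overtakes the crossbar cost $4\rho+4y$ plus the area cost $2\lambda x(2\rho+y)$ and yields $C(\theta)=\frac{2\cos\theta-(1+\sin\theta)(\cos\theta+2)}{2(\cos\theta+1)}$ with supremum $\approx 0.2217$. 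Relatedly, your worry that ``very low local curvature'' near the tangency points could make the surgery save no perimeter has the logic inverted: low curvature is the favorable case (the arcs hug their tangent lines, so $y$, the crossbars, and the enclosed area are all small); the adversarial case is high curvature, and that is precisely what Allard's bound excludes. No perturbation argument ``coercing curvature'' is needed --- what is needed is the upper curvature bound $\lambda$ coming from minimality.
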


\begin{proof} 
Let $\Gamma$ be a $C^{1,1}$ isometric embedding of $\mathbb{S}^1$ into $\mathbb{R}^2$. 
Suppose by way of contradiction that $\reach(\Gamma) < C/\lambda$. 
By a comparison argument, we will show that there is a local perturbation of our current $\Gamma$ into $\Gamma^\ast := \Gamma - \partial S^\ast$ for which 
\begin{equation*}
\mathbf{M}(\Gamma) > \mathbf{M}(\Gamma^\ast) + \lambda\mathbf{M}(S^\ast),
\end{equation*}
to then conclude, by Lemma~\ref{thm:lem-1}, that $\Gamma$ was not a minimizer in the first place.

\begin{enumerate}[label={\bfseries Step \arabic{enumi}.}]
\item 
First, define $\mathbf{v}_\epsilon$ and $\bar{\mathbf{v}_\epsilon}$ to be the outer and inner-normal map of $\Gamma$ as specified in Definition \ref{def:outinormal}.
Similarly, we let $\rho$ be the injectivity radius of the inner and outer normal map of $\Gamma$; that is, the supremal $\epsilon$ for which both $\mathbf{v}_\epsilon$ and $\bar{\mathbf{v}}_\epsilon$ are injective on $\mathbb{S}^1$. 

Since $\reach(\Gamma) > 0$ \cite{lucas1957submanifolds} and $\Gamma$ is compact, we may assume that $\rho > 0$ and that there exists distinct $v, v' \in \mathbb{S}^1$ for which $\mathbf{v}_\rho(v) = \mathbf{v}_\rho(v')$ or $\bar{\mathbf{v}}_\rho(v) = \bar{\mathbf{v}}_\rho(v')$. 
Without loss of generality, for the remainder of the proof we will assume that $\mathbf{v}_\rho(v) = \mathbf{v}_\rho(v')$.

\item 
In this step, for any $-\lambda^{-1} \leq t \leq  \lambda^{-1}$, we will construct a current $S^\ast(t)$. 
We will then find an optimal $t'$ and then define $S^\ast := S^\ast(t')$.

Without loss of generality, we will work under the translation $\Gamma - \gamma(v)$ and the rotation for which $\Tan(\Gamma, \gamma(v)) - \gamma(v)$ is equal to a coordinate axis and $\Tan(\Gamma, \gamma(v')) - \gamma(v')$ is equal to $\mathbb{R}\times\{-2\rho\}$. 
Now for any $t\in [-\lambda^{-1}, \lambda^{-1}]$, let us define the following two regions (Figure~\ref{fig:lower-bound-reach-1}):

\begin{itemize}
  \item 
    $R_1(t) = Q_1\setminus [\bfB((0, \lambda^{-1}), 1/\lambda)\cup \bfB((0, -\lambda^{-1}), \lambda^{-1})]$
    where $Q_1$ is the closed rectangle defined by the four vertices $(-t, \lambda^{-1}), (-t, -\lambda^{-1}), (t, \lambda^{-1})$, and $(-t, -\lambda^{-1})$, and

  \item 
    $R_2(t) = Q_2\setminus [B_{\lambda^{-1}}((0, \lambda^{-1} - \rho))\cup B_{\lambda^{-1}}((0, -\lambda^{-1} - \rho))]$ where $Q_2$ is the closed rectangle defined by the four vertices $(-t, \lambda^{-1} - \rho), (-t, -\lambda^{-1} - \rho), (t, \lambda^{-1} - \rho)$, and $(-t, -\lambda^{-1} - \rho)$.
\end{itemize}

\begin{figure}[ht!]
\centering
\includegraphics[width=10cm]{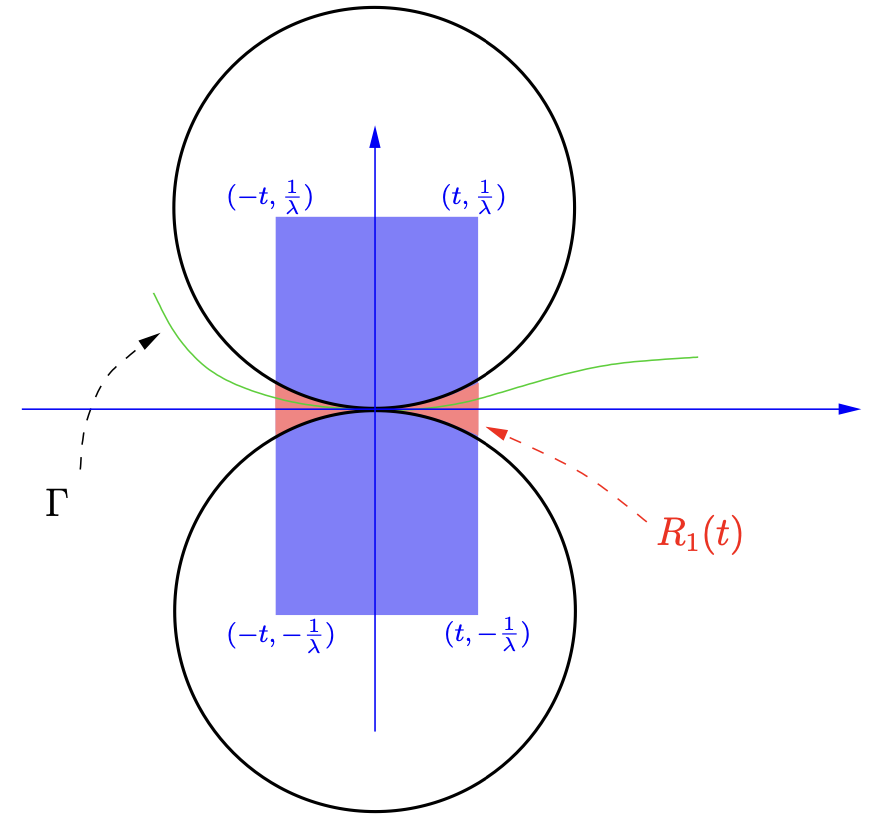}
\caption{The region $R_1(t)$ is constructed by taking the interior of the two disks away from the rectangle.}
\label{fig:lower-bound-reach-1}
\end{figure}

The curvature of $\Gamma$ is defined almost everywhere in $\mathbb{S}^1$ and is bounded above by $\lambda$~\cite{allard-2007-1}. Thus, for any $s \in \mathbb{S}^1$ and any $x \in (s - \pi\lambda^{-1}, s + \pi\lambda^{-1})$, $\gamma(x)$ is contained outside the two unique balls of radius $\lambda^{-1}$  which have first order contact with $\Tan(\Gamma, q)$.

Therefore for $\delta \in (0, \frac{\pi}{2\lambda})$, there exists functions $F$ and $F'$ defined on the first coordinate axis for which the images of $[v - \delta, v + \delta]$ and $[v' - \delta, v' + \delta]$ under $\gamma$ equal the graph of $F$ and $F'$ respectively. Hence, for any point $t \in [-\lambda^{-1}, \lambda^{-1}]$, we get that $(t,F(t))$ and $(t,F'(t))$ are contained inside the regions $R_1(t)$ and $R_2(t)$ respectively (see again Figure~\ref{fig:lower-bound-reach-1}).


For any $t \in [-\lambda^{-1}, \lambda^{-1}]$, let
\begin{align*}
  \Gamma^1(t) &:= \{(x, F(x)) : x\in [-t\lambda^{-1}, t\lambda^{-1}]\}
\end{align*}

and
\begin{align*}
  \Gamma^2(t) &:= \{(x, F'(x)) : x\in [-t\lambda^{-1}, t\lambda^{-1}]\}
\end{align*}

be the local images of $\Gamma$ contained in $R_1$ and $R_2$, respectively. For $0 < s \leq t$, let $L(s)$ denote the portion of the vertical line segment $x = s$ that lies between $\Gamma^1(t)$ and $\Gamma^2(t)$, and let $S^\ast(t)$ be the bounded region whose boundary is the union $\Gamma^1(t)\cup \Gamma^2(t)\cup L(-t)\cup L(t)$ with orientation induced by $\Gamma$.

\vspace{2mm}

\item 
  In this step, we will find the lower bound for $\reach(\Gamma)$.


  For all $x \in (0, \lambda^{-1})$, we have the lower bound
  \begin{align}\label{eq:step3-eq1}
    \mathbf{M}(\Gamma^1(x)+ \Gamma^2(x)) &\geq 4x
  \end{align} 
 
and the upper bound
\begin{align}\label{eq:step3-eq2}
(4y +2(2\rho)) + \lambda(2(2\rho x + xy)) &\geq \mathbf{M}(L(x) + L(-x)) + \lambda\mathbf{M}(S^\ast(x)). 
 \end{align}
 
Therefore, since $\mathbf{M}(\Gamma) = \mathbf{M}(\Gamma^1(x) + \Gamma^2(x)) + \mathbf{M}(\Gamma - (\Gamma^1(x) + \Gamma^2(x)))$, and $\mathbf{M}(\Gamma - \partial S^\ast) = \mathbf{M}(\Gamma - (\Gamma^1(x) + \Gamma^2(x))) + \mathbf{M}(L(x) + L(-x))$, showing that $\mathbf{M}(\Gamma) > \mathbf{M}(\Gamma-\partial S^\ast(x)) + \lambda\mathbf{M}(S^\ast(x))$ reduces to showing 
\begin{align}\label{eq:step3-eq3}
\mathbf{M}(\Gamma^1(x) + \Gamma^2(x)) &> \mathbf{M}(L(x) + L(-x)) + \lambda\mathbf{M}(S^\ast(x)).
\end{align}

Since conditions in Equations (\ref{eq:step3-eq1}) and (\ref{eq:step3-eq2}) are true for all $x \in (0, \lambda^{-1})$, we will show inequality in Equation (\ref{eq:step3-eq3}) by finding values of $x$ such that for $y(x) = \sqrt{(1/\lambda)^2 - x^2} - 1/\lambda$, we have

\begin{align}
4x & > (4y +2(2\rho)) + \lambda(2(2\rho x + xy));
\end{align}
 or equivalently,

\begin{align}\label{eq:step3-eq4}
\displaystyle\frac{4x - 2y(\lambda x + 2)}{4(\lambda x + 1)} &> \rho.
\end{align}

By changing to polar coordinates, the condition in Equation (\ref{eq:step3-eq4}) is equivalent to

\begin{align}\label{eq:step3-eq5}
\displaystyle\left(\frac{1}{\lambda}\right)\frac{2\cos{\theta} - (1 + \sin{\theta})(\cos{\theta} + 2)}{2(\cos{\theta} + 1)} &> \rho.
\end{align}

Indeed, we may choose any $\theta \in (3\pi/2, 2\pi)$ for which the inequality~(\ref{eq:step3-eq5}) holds true.
However, since we are trying to obtain the largest lower bound possible, we find that for 

\begin{align}
C(\theta) &= \frac{2\cos{\theta} - (1 + \sin{\theta})(\cos{\theta} + 2)}{2(\cos{\theta} + 1)},
\end{align}
we get that
\begin{align*}
\hat{C}&:= \sup_{\theta \in (3\pi/2, 2\pi)}C(\theta)\\
&\approx 0.2217
\end{align*}
and that 
\begin{align*}
\theta' &:= \mathrm{argmin}\{C(\theta) : \theta \in (3\pi/2, 2\pi)\}\\
&\approx 5.231. 
\end{align*}

Therefore, letting $S^\ast = S^\ast(x')$ for $x' = \lambda^{-1}\cos(\theta')$ concludes the proof.
\end{enumerate}
\end{proof}

\begin{theorem}\label{thm: flatNormReach}
  Suppose that $T = \partial\Omega$ for $\Omega\subset\bbR^2$ and that
  $\Gamma = T-\partial S$ is a corresponding one dimensional
  multiscale-flatnorm minimizer with scale parameter $\lambda$. Then
  the reach of $\Gamma$ is bounded below by $\hat{C}/\lambda$.
\end{theorem}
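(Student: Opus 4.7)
The plan is to reduce Theorem~\ref{thm: flatNormReach} to the already-established Theorem~\ref{thm:one-circle}, which handles the case of a single $C^{1,1}$ isometric embedding of $\mathbb{S}^1$. First, I would invoke regularity results for $L^1$-TV / flat norm minimizers in the plane: since $\Gamma = \partial(\bfE^2 \myell \Sigma_\lambda)$ for a set $\Sigma_\lambda$ of finite perimeter solving the minimization~(\ref{eq:fn2}), the curvature of $\partial \Sigma_\lambda$ is essentially bounded by $\lambda$ (as referenced in Step~2 of the proof of Theorem~\ref{thm:one-circle}), and $\Gamma$ decomposes as a finite disjoint union of $C^{1,1}$ simple closed curves $\Gamma_1, \dots, \Gamma_m$. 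Applying Theorem~\ref{thm:one-circle} to each component yields $\reach(\Gamma_i) \geq \hat{C}/\lambda$.

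Next, I would argue the inter-component separation. Suppose for contradiction that $\reach(\Gamma) < \hat{C}/\lambda$. By the previous step, the reach must then be realized by a pair of nearest points lying on \emph{distinct} components, say $\gamma_i(v) \in \Gamma_i$ and $\gamma_j(v') \in \Gamma_j$ with $|\gamma_i(v) - \gamma_j(v')| = 2\rho < 2\hat{C}/\lambda$ and with their outer-normal segments of length $\rho$ meeting at a common midpoint. By a direct analogue of Lemma~\ref{lem: equal-tangents} adapted to the two-component setting (the proof is actually easier here, since $\Gamma_i$ and $\Gamma_j$ are a positive distance apart away from the point of collision, so the sets $\Gamma_{v,\epsilon}$ and $\Gamma_{v',\epsilon}$ are automatically disjoint for all small $\epsilon$), the tangent lines at $\gamma_i(v)$ and $\gamma_j(v')$ must be parallel, and the segment joining them is perpendicular to both.

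Having set up this configuration, the comparison argument from Steps~2 and~3 of the proof of Theorem~\ref{thm:one-circle} transfers essentially verbatim: after the same translation and rotation normalization, $\Gamma_i$ is locally the graph of a function $F$ trapped in the region $R_1(t)$, and $\Gamma_j$ is locally the graph of $F'$ trapped in $R_2(t)$. Constructing the same competitor current $S^\ast(t')$ with $t' = \lambda^{-1}\cos \theta'$ and $\theta' \approx 5.231$ produces
\[\mathbf{M}(\Gamma) > \mathbf{M}(\Gamma - \partial S^\ast) + \lambda \mathbf{M}(S^\ast),\]
contradicting minimality of $\Gamma$ via Lemma~\ref{thm:lem-1}.

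The main obstacle I expect is the rigorous justification of the first step: the regularity claim that a general multiscale flat norm minimizer in the plane is $C^{1,1}$ and decomposes into embedded simple closed curves, rather than admitting triple junctions, cusps, or higher-multiplicity pieces. Once that structural result is cited or established (standard for the $L^1$-TV formulation in $\bbR^2$, since the mean curvature of $\partial \Sigma_\lambda$ is essentially bounded by $\lambda$), the rest is essentially observational: Steps~2 and~3 of the proof of Theorem~\ref{thm:one-circle} never really used that the two colliding normal-map points lay on the same connected component; they used only the parallel tangent lines, the separation $2\rho$, and the a.e.\ curvature bound $\lambda$. With that noted, the same construction yields the contradiction for both intra- and inter-component cases uniformly, giving the lower bound $\reach(\Gamma) \geq \hat{C}/\lambda$.
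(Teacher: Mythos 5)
Your proposal matches the paper's proof in all essentials: the paper likewise notes that $\Gamma$ is a finite disjoint union of $C^{1,1}$ embedded circles and observes that the comparison construction of Theorem~\ref{thm:one-circle} ``goes through whether or not the points $\gamma_i(v)$ and $\gamma_j(v')$ are on the same circle or not,'' which is exactly your inter-component reduction. If anything, your write-up is more explicit than the paper's about the two places needing care (the $C^{1,1}$ decomposition into simple closed curves, and the tangent-line alignment via the analogue of Lemma~\ref{lem: equal-tangents}), both of which the paper asserts with less detail.
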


\begin{proof}
  Since $\Gamma$ is a $C^{1,1}$ embedded curve in $\bbR^2$ with empty
  boundary, it has positive reach. This implies that there is some
  small $\epsilon$ so that every component of $\Gamma$ contains a
  translate of $\bfB(0,\epsilon)$. Since the length of $\gamma$ is
  finite, we must also have that $\Gamma\subset \bfB(0,N)$ for some
  large enough $N\in\bbN$. Since each of the balls in separate
  components of $\Gamma$ are disjoint, then there are most $(\pi
  N^2)/(\pi \epsilon^2)$ components of $\Gamma$. Since each of the
  components of $\Gamma$ are embedded circles, each with the same
  orientation, we get that the comparison construction in
  Theorem~\ref{thm:one-circle} goes through whether or not the points
  $\gamma_i(v)$ and $\gamma_j(v')$ are on the same circle or not.
  Therefore, we can use the argument used to prove
  Theorem~\ref{thm:one-circle} to finish the proof.
\end{proof}

We will now use the theorem from above in the proof of the following theorem, which states that we may use the volume of flat norm minimizers in a lower bound of the probability that our balls \emph{almost} covers $E$.

\begin{theorem}\label{prop:flatnormsufficient}
  Let $E \subset \bbR^2$ be an open set of finite perimeter. 
  If $\lambda > \Lambda_E$ and $\delta > 0$ are chosen such that 
  \[|S_\lambda| < \frac{\delta^2}{2}\quad \text{ and } \quad 0 < \delta < \frac{1}{5\lambda} \quad \text{ for } \quad S_\lambda \in \srF_\lambda(\partial E),\]
  then for $A := E\cap (E- S_\lambda)$ and $\alpha_\delta := \delta^2/2|E|$ there exists a good $\alpha_\delta$-almost partition $\clR_{A}$ of $E$ such that
  \begin{align}\label{eq:almostall}
    P\left(\frac{|\bfB(\bbX, 3\delta) \cap E|}{|E|} \geq 1 - \alpha_\delta\right) \geq 1 - M\exp\left(\frac{-(\delta^2 2^{-1} - |S_\lambda|}{|A|} N\right)
  \end{align}
  where $N$ is the number of samples in $\bbX$ and $M$ is the cardinality of $\clR_{A}$.
\end{theorem}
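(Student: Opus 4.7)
The strategy is to leverage the positive reach of the flat-norm denoised set $\Sigma_\lambda$ (the support of $\bfE^2\myell E - S_\lambda$, i.e., ``$E - S_\lambda$'' as used in the statement) in order to extract a partition via Lemma~\ref{lem:opening}, and then restrict this partition to the overlap $A = E \cap \Sigma_\lambda$ so that it becomes a good $\alpha_\delta$-almost partition of $E$. First I would apply Theorem~\ref{thm: flatNormReach} to the minimizer $S_\lambda$: since $\lambda > \Lambda_E$, the boundary current $\partial(\bfE^2\myell E) - \partial S_\lambda$ is a nontrivial $C^{1,1}$ closed curve, so $\reach(\partial \Sigma_\lambda) \geq \hat{C}/\lambda$ with $\hat{C} \approx 0.2217 > 1/5$. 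Combining this with the inequality $\reach(\Sigma_\lambda^c) \geq \reach(\partial \Sigma_\lambda)$ (cited after Theorem~\ref{thm:fnreach}) and the hypothesis $\delta < 1/(5\lambda) < \hat{C}/\lambda$ places Lemma~\ref{lem:opening} in force on $\Sigma_\lambda$ at scale $\delta$, producing a finite partition $\clR_{\Sigma_\lambda} = \{R_1,\dots,R_M\}$ of $\Sigma_\lambda$ with $|R_i| \geq \delta^2/2$ and $\diam(R_i) \leq 3\delta$.

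Next I would trim by setting $\clR_A := \{R_i \cap E : 1 \leq i \leq M\}$. Since each $R_i \subseteq \Sigma_\lambda$, we have $R_i \cap E = R_i \cap A$, so $\clR_A$ is a partition of $A$. Diameters cannot grow under intersection, giving $\diam(R_i \cap E) \leq 3\delta$. For the volume lower bound, $R_i \setminus E \subset \Sigma_\lambda \setminus E$, and since $|E \Delta \Sigma_\lambda| \leq \bfM(S_\lambda) = |S_\lambda|$, one obtains $|R_i \cap E| \geq \delta^2/2 - |S_\lambda|$, which is strictly positive by the hypothesis $|S_\lambda| < \delta^2/2$. Thus $\clR_A$ is a good partition of $A$. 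Moreover $|E \setminus A| \leq |S_\lambda| < \delta^2/2 = \alpha_\delta|E|$, so $|A| > (1-\alpha_\delta)|E|$, confirming that $\clR_A$ is an $\alpha_\delta$-almost partition of $E$ in the sense of Definition~\ref{def:goodalphaalmostpartition}.

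For the probability bound I would follow the template of Theorem~\ref{thm:reach}. If every partition element $R_i \cap A$ contains at least one sample from $\bbX$, then by the $3\delta$ diameter bound $\bfB(\bbX,3\delta) \supset A$, and so
\begin{align*}
\frac{|\bfB(\bbX,3\delta)\cap E|}{|E|} \;\geq\; \frac{|A|}{|E|} \;\geq\; 1 - \alpha_\delta.
\end{align*}
The complementary event---that some partition element is missed---has probability controlled by the union bound:
\begin{align*}
\sum_{i=1}^M \left(1 - \frac{|R_i \cap A|}{|E|}\right)^{\!N} \;\leq\; M\,\exp\!\left(-\,\frac{\delta^2/2 - |S_\lambda|}{|E|}\,N\right),
\end{align*}
which yields inequality~(\ref{eq:almostall}) (noting that $|A| \leq |E|$ makes the stated denominator only strengthen the exponent).

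The principal obstacle will be rigorously justifying the hypotheses of Theorem~\ref{thm: flatNormReach}, namely that $\Sigma_\lambda$ is nontrivial and that its boundary $\partial \Sigma_\lambda$ is a $C^{1,1}$ embedded submanifold (rather than merely a rectifiable current), so that its reach can be quoted; the role of $\lambda > \Lambda_E$ is precisely to guarantee nontriviality. A secondary subtlety is that Lemma~\ref{lem:opening} requires an open set, so one must pass between $\Sigma_\lambda$ and its measure-theoretic interior without changing the partition argument---which is harmless because Lebesgue-null boundary discrepancies do not affect either the reach bound or the mass lower bounds $|R_i \cap A| \geq \delta^2/2 - |S_\lambda|$. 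The remainder of the argument is a transcription of the Whitney-type construction of Theorem~\ref{thm:reach} restricted to the subset $A$.
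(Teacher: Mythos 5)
Your proposal follows essentially the same route as the paper's proof: apply Theorem~\ref{thm: flatNormReach} to get $\reach(E_\lambda^c) \geq \reach(\partial E_\lambda) > 1/(5\lambda) > \delta$ for $E_\lambda = E - S_\lambda$, invoke Lemma~\ref{lem:opening} to partition $E_\lambda$ at scale $\delta$, intersect the pieces with $E$ to obtain the good $\alpha_\delta$-almost partition with volume deficit at most $|S_\lambda|$, and then run the union-bound argument of Theorem~\ref{thm:reach}. The differences are only in the level of detail (e.g., your explicit handling of the $|A|$-versus-$|E|$ denominator, a point the paper itself glosses over by simply citing ``the argument in Theorem~\ref{thm:reach}'').
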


\begin{proof}
  Let $E$ be an open set of finite perimeter in $\bbR^2$.
  Suppose that there exists a $\delta > 0$ and $\lambda > \Lambda_E$ such that 
  \[|S_\lambda| < \frac{\delta^2}{2}\quad \text{ and } \quad 0 < \delta < \frac{1}{5\lambda}\,.\]
  By Theorem~\ref{thm: flatNormReach}, we get that $\reach(\partial E_\lambda) > 1/(5\lambda)$ for $E_\lambda := E - S_\lambda$. 
  Since $\delta < \reach(\partial E_\lambda) \leq \reach(E_\lambda^c)$, by Lemma~\ref{lem:opening}, there exists a partition $\clR_{E_\lambda}^\delta$ of $E_\lambda$ such that 
  \[|R| \geq \frac{\delta^2}{2}\quad \text{ and }\quad \diam(R) \leq 3\delta\quad \text{ for all } R \in \clR_{E_\lambda}^\delta.\]

  \noindent Now take the partition of $E\cap (E - S_\lambda)$ defined by 
  \[\clR_{E\cap (E - S_\lambda)} = \{R \cap (E\cap (E - S_\lambda)) : R \in \clR^\delta_{E_\lambda}\}\,.\]
  Since $|E\cap (E - S_\lambda)| \geq |E| - |E - S_\lambda| \geq |E| - \frac{\delta^2}{2}$ implies that
  \[\frac{|E \cap(E - S_\lambda)|}{|E|} \geq 1 - \frac{\delta^2}{2|E|}\,,\]
  if we can prove that 
  \begin{align}\label{eq:want}
    P(\bfB(\bbX, 3\delta) \supset E\cap E_\lambda) \geq 1 - M\exp\left(\frac{-(\delta^2 2^{-1} - |S_\lambda|)}{|E\cap E_\lambda|} N\right)
  \end{align}
  we will have proven the inequality in Equation (\ref{eq:almostall}). 
  Since $|S_\lambda| < \delta^2/2$, we get that $\inf\{|R| : R \in \clR_{E \cap E_\lambda}\} \geq \inf\{|R| : R \in \clR^\delta_{E_\lambda}\} - |S_\lambda| > 0$.
  Therefore, we may apply the argument in Theorem~\ref{thm:reach} and obtain the inequality in Equation (\ref{eq:want}).
\end{proof}

In the special case that $E \subset \bbR^2$ is obtained in a similar manner to Corollary~\ref{cor:UminusA}, that is, $E = U \setminus A$ where $A$ is a subset of a compactly supported subset of $U$, then for a range of scales $\lambda$ there exists $\eta > 0$ such that $S_\lambda = A$ if $|A| < \eta$. 
In particular, running the multiscale flat norm on $\partial E$ will fill $U\setminus A$ back in to get $U$. 
Notice that if $|A| < \delta^2/2$, then Proposition~\ref{prop:flatnormsufficient} tells us that we have a good \emph{almost} partition of $E$. 
However, in Proposition~\ref{thm:fnreach} we prove that it would in fact be a \emph{good} partition of $E$ since $S_\lambda = A$. 

For subsets $A$ and $B$ of $\bbR^\dmn$ we will use $A \subset\subset B$ to denote that $A$ is a compactly supported subset of $B$. 

\begin{figure}[htp!]
  \centering 
  \includegraphics[width=.6\linewidth]{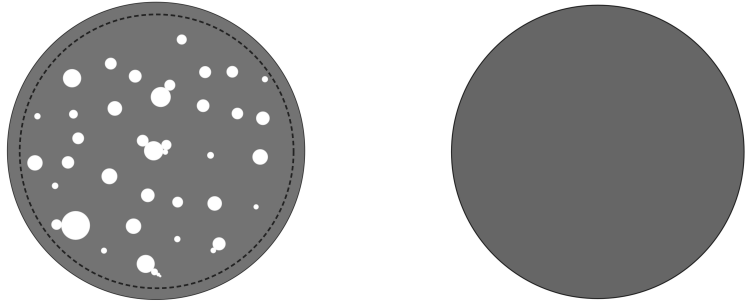}
  \caption{On the left we have removed a small set $A$ from $U = \bfB(0, R)$ away from the boundary of $U$ to obtain $E = U\setminus A$ such that $\reach(E^c) = 0$. 
  Although $E$ does not satisfy the assumptions of Lemma~\ref{lem:opening}, we may apply the multiscale flat norm to $\partial E$ to obtain the nice set $U$ with positive reach on the right.
  \label{fig:flatnormfill}
  }
\end{figure}

The proof of Theorem~\ref{thm:fnreach} uses Lemmas~\ref{lem: vixie-1},~\ref{lem: lemma_vixie}, Corollary~\ref{cor: corollary}, Proposition~\ref{prop: flat norm}, and Theorem~\ref{thm:vixie} from our previous work~\cite{Vi2007}.
We reproduce these results here for the sake of completeness.
Recall the notions of measure theoretic boundary, interior, and exterior given in Definition \ref{def: boundary}.
\begin{lemma}\label{lem: vixie-1}
Let $A$ be a subset of $\bbR^n$ with finite perimeter. 
Then 
\begin{enumerate}
\item $\Per(A) = \clH^{n - 1}(\partial_\ast A)$, and 
\item $\bbR^n = A_\ast^o \cup \partial_\ast A \cup A_\ast^i$ and the three sets are pairwise disjoint. 
\end{enumerate}
\end{lemma}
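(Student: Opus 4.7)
The plan is to prove (1) by invoking the structure theorem for sets of finite perimeter together with the comparison between the reduced boundary and the measure theoretic boundary already referenced in the paper, and to prove (2) by a short case analysis based on the elementary volume identity $|\bfU(x,r)\cap A| + |\bfU(x,r)\cap A^c| = \omega_\dmn r^\dmn$, where $\omega_\dmn := |\bfU(0,1)|$.

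For (1), I would start from De Giorgi's structure theorem for sets of finite perimeter (e.g., Evans--Gariepy, Theorem 2 in \S 5.7), which yields $\Per(A) = \clH^{\dmn-1}(\partial^\ast A)$, where $\partial^\ast A$ is the reduced boundary. The paper has already quoted the fact that $\partial^\ast A \subset \partial_\ast A$ and $\clH^{\dmn-1}(\partial_\ast A \setminus \partial^\ast A) = 0$ (Evans--Gariepy, Lemma 1 in \S 5.8). Combining these two statements gives $\clH^{\dmn-1}(\partial_\ast A) = \clH^{\dmn-1}(\partial^\ast A) = \Per(A)$, which is (1). There is nothing deeper to do here.

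For (2), I will use the shorthand $L_A(x) := \limsup_{r\to 0^+} |\bfU(x,r)\cap A|/r^\dmn$ and similarly $L_{A^c}(x)$. The key observation is that for each $x$ and each $r>0$ the identity
\[\frac{|\bfU(x,r)\cap A|}{r^\dmn} + \frac{|\bfU(x,r)\cap A^c|}{r^\dmn} = \omega_\dmn\]
holds. If $L_A(x)=0$ then $|\bfU(x,r)\cap A|/r^\dmn$ actually converges to $0$, so $|\bfU(x,r)\cap A^c|/r^\dmn \to \omega_\dmn$, giving $L_{A^c}(x)=\omega_\dmn>0$. In particular $L_A(x)$ and $L_{A^c}(x)$ cannot vanish simultaneously. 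Pairwise disjointness then reads off immediately from the definitions: $A_\ast^o$ demands $L_A(x)=0$ while $\partial_\ast A$ demands $L_A(x)>0$, so $A_\ast^o \cap \partial_\ast A = \emptyset$; symmetrically $A_\ast^i \cap \partial_\ast A = \emptyset$; and $A_\ast^o \cap A_\ast^i=\emptyset$ by the non-vanishing observation just made. For the covering statement, given any $x \in \bbR^\dmn$, exactly one of the three alternatives (a) $L_A(x)=0$, (b) $L_{A^c}(x)=0$, (c) both positive must hold, placing $x$ in $A_\ast^o$, $A_\ast^i$, or $\partial_\ast A$, respectively.

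There is no real obstacle; both parts are bookkeeping once the cited results and the volume identity are in hand. The one small point requiring care is the step that upgrades $L_A(x)=0$ from a limsup statement to an actual limit (and thus forces $L_{A^c}(x)=\omega_\dmn$), which is valid simply because the quantity is non-negative.
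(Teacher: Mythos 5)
Your proposal is correct and follows essentially the same route as the paper: part (1) is exactly the citation of the structure theorem together with the reduced/measure-theoretic boundary comparison, and part (2) is the direct case analysis from Definition~\ref{def: boundary} that the paper leaves as a one-line remark. The only difference is that you spell out the volume identity $|\bfU(x,r)\cap A|+|\bfU(x,r)\cap A^c|=\omega_\dmn r^\dmn$ and the limsup-to-limit upgrade, which the paper omits but which is the intended justification.
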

\begin{proof}
The first conclusion follows from the results of Evans and Geriepy~\cite[\S 5.7 Theorem 2, \S 5.8 Lemma 1]{evans-1992-1}. 
The second conclusion follows directly from Definition~\ref{def: boundary}.
\end{proof}

\begin{lemma}\label{lem: lemma_vixie}
The following statements are true for subsets $A$ and $B$ of $\bbR^n$:
\begin{enumerate}
\item If $x \in A_\ast^i$ or $x \in B_\ast^i$ then $x \in (A \cup B)_\ast^i$. 
\item $\partial_\ast(A \cup B) \subset \partial_\ast A \cup \partial_\ast B$. 
\item $\partial_\ast A \cup \partial_\ast B = (\partial_\ast A \cap B_\ast^i) \cup (\partial_\ast A \cap B_\ast^o) \cup (\partial_\ast B \cap A_\ast^i) \cup (\partial_\ast B \cap A_\ast^0) \cup (\partial_\ast A \cap \partial_\ast B)$.
\item (1-3) above immediately imply that $\partial_\ast(A \cup B) \subset (\partial_\ast A \cap B_\ast^o) \cup (\partial_\ast \cap A_\ast^o)\cup (\partial_\ast A \cap \partial_\ast B)$.
\item $(\partial_\ast A\cap B_\ast^o) \cup (\partial_\ast B \cap A_\ast^o) \subset \partial_\ast(A \cup B)$.
\item $\partial_\ast A^c = \partial_\ast A$.
\item $(A^c)_\ast^0 = A_\ast^i$.
\end{enumerate}
\end{lemma}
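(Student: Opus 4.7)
The plan is to derive each of the seven parts directly from the definitions in Definition \ref{def: boundary}, together with the trichotomy $\bbR^n = A_\ast^o \sqcup \partial_\ast A \sqcup A_\ast^i$ already established in Lemma \ref{lem: vixie-1}. The only analytic inputs needed beyond elementary measure theory are two standard facts about $\limsup$: if $f,g\geq 0$ and $\limsup_{r\to 0^+}(f+g) > 0$, then at least one of $\limsup f$ or $\limsup g$ is positive (by passing to a convergent subsequence of $r_k\to 0^+$); and if $\limsup g = 0$, then $\limsup(f-g) \geq \limsup f$. Once these two observations are in place, every part reduces to bookkeeping with Lebesgue measure of $\bfU(x,r)$ intersected with appropriate sets.

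For part~1 I would use the monotonicity inclusion $(A\cup B)^c \subset A^c$, which gives $|\bfU(x,r)\cap(A\cup B)^c| \leq |\bfU(x,r)\cap A^c|$; dividing by $r^n$ and taking $\limsup$ pushes the value $0$ through. For part~2 I would take $x \in \partial_\ast(A\cup B)$ and apply subadditivity $|\bfU(x,r)\cap(A\cup B)| \leq |\bfU(x,r)\cap A| + |\bfU(x,r)\cap B|$ to conclude, via the first $\limsup$ fact, that $x$ must lie in $\partial_\ast A$ or $\partial_\ast B$ (the required positivity on the complement side comes for free from $(A\cup B)^c\subset A^c\cap B^c$). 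Part~3 is purely set-theoretic: applying the trichotomy of Lemma \ref{lem: vixie-1} to $B$ splits $\partial_\ast A$ into the three pieces $\partial_\ast A\cap B_\ast^i$, $\partial_\ast A\cap \partial_\ast B$, $\partial_\ast A\cap B_\ast^o$; symmetrically for $\partial_\ast B$, and one then collects terms. Part~4 chains parts~2 and~3: the only two pieces in the decomposition of~3 that must be excluded are $\partial_\ast A\cap B_\ast^i$ and $\partial_\ast B\cap A_\ast^i$, and both are ruled out because part~1 places them inside $(A\cup B)_\ast^i$, which is disjoint from $\partial_\ast(A\cup B)$ by the trichotomy.

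For part~5 I would work pointwise: given $x \in \partial_\ast A\cap B_\ast^o$, the lower bound $|\bfU(x,r)\cap(A\cup B)| \geq |\bfU(x,r)\cap A|$ certifies positivity of the first defining $\limsup$ for $\partial_\ast(A\cup B)$, while the identity $(A\cup B)^c = A^c\cap B^c$ together with $|\bfU(x,r)\cap(A\cup B)^c| \geq |\bfU(x,r)\cap A^c| - |\bfU(x,r)\cap B|$ and the second $\limsup$ fact above certifies positivity of the complementary $\limsup$, using that $\limsup |\bfU(x,r)\cap B|/r^n = 0$ because $x\in B_\ast^o$. Parts~6 and~7 are immediate from the symmetric appearance of $A$ and $A^c$ in the definition of $\partial_\ast A$ and from the fact that the limsup defining $(A^c)_\ast^o$ is literally the one defining $A_\ast^i$.

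The main obstacle, such as it is, is exercising care with how $\limsup$ distributes over sums and differences, since the measure-theoretic boundary is defined via strict positivity of a $\limsup$ rather than a limit; handling this via the subsequence argument in part~2 and the one-sided estimate in part~5 is the only nontrivial point. After that, the proof is a sequence of short, largely mechanical set-theoretic verifications, and the appropriate order is exactly $1 \to 2 \to 3 \to 4$, then $5$, then $6$ and $7$ separately, so that each later part may freely invoke the earlier ones.
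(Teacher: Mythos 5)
Your proposal is correct and follows the same route the paper intends: the paper's own ``proof'' consists of the single sentence that the lemma follows in a straightforward manner from Definition~\ref{def: boundary}, and your write-up simply supplies those routine verifications (monotonicity and subadditivity of $r\mapsto|\bfU(x,r)\cap\cdot|$, the trichotomy from Lemma~\ref{lem: vixie-1}, and the two standard $\limsup$ estimates). All seven parts check out, including the two genuinely non-mechanical points you flag --- the subsequence argument in part~2 and the one-sided bound $\limsup(f-g)\geq\limsup f-\limsup g$ in part~5.
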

\begin{proof}
The lemma follows in a straightforward manner from the definitions of measure theoretic boundary, interior, and exterior in Definition \ref{def: boundary}.
\end{proof}

\begin{corollary}\label{cor: corollary}
If $\clH^{n-1}(\partial_\ast A \cap \partial_\ast B) = 0$ then 
\begin{enumerate}
\item $\clH^{n-1}(\partial_\ast(A \cup B)) = \clH^{n-1}(\partial_\ast A \cap B_\ast^o) + \clH^{n-1}(\partial_\ast B \cap A_\ast^o)$ and
\item $\clH^{n-1}(\partial_\ast(A \cap B)) = \clH^{n-1}(\partial_\ast A \cap B_\ast^i) + \clH^{n-1}(\partial_\ast B \cap A_\ast^i)$.
\end{enumerate}
\end{corollary}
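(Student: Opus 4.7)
The plan is to extract item (1) directly from parts (4), (5) of Lemma~\ref{lem: lemma_vixie} and the trichotomy in part (2) of Lemma~\ref{lem: vixie-1}, and then deduce item (2) by applying item (1) to the complements $A^c, B^c$ and simplifying with parts (6), (7) of Lemma~\ref{lem: lemma_vixie}.

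For item (1), I would first chain the set inclusions (5) and (4) of Lemma~\ref{lem: lemma_vixie} to obtain
\[
(\partial_\ast A\cap B_\ast^o) \cup (\partial_\ast B \cap A_\ast^o) \;\subset\; \partial_\ast(A\cup B) \;\subset\; (\partial_\ast A\cap B_\ast^o) \cup (\partial_\ast B \cap A_\ast^o) \cup (\partial_\ast A \cap \partial_\ast B),
\]
then apply $\clH^{n-1}$ to this chain. By the hypothesis $\clH^{n-1}(\partial_\ast A \cap \partial_\ast B) = 0$, the upper and lower bounds coincide, so
\[
\clH^{n-1}(\partial_\ast(A\cup B)) \;=\; \clH^{n-1}\bigl((\partial_\ast A\cap B_\ast^o) \cup (\partial_\ast B \cap A_\ast^o)\bigr).
\]
To turn the union on the right into a sum I need to verify disjointness: a point in $\partial_\ast A \cap B_\ast^o$ lies in $B_\ast^o$, and a point in $\partial_\ast B \cap A_\ast^o$ lies in $\partial_\ast B$, so by the pairwise disjointness of $B_\ast^o,\partial_\ast B, B_\ast^i$ from part (2) of Lemma~\ref{lem: vixie-1}, the two sets cannot overlap. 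Finite additivity of $\clH^{n-1}$ on disjoint sets then yields the desired decomposition.

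For item (2), I would use the De~Morgan identity $A \cap B = (A^c \cup B^c)^c$, together with part (6) of Lemma~\ref{lem: lemma_vixie} applied twice, to get $\partial_\ast(A\cap B) = \partial_\ast(A^c \cup B^c)$. The hypothesis transfers to $A^c, B^c$ since $\partial_\ast A^c \cap \partial_\ast B^c = \partial_\ast A \cap \partial_\ast B$ has zero $(n-1)$-measure. Applying item (1) to the pair $A^c, B^c$ gives
\[
\clH^{n-1}(\partial_\ast(A^c \cup B^c)) \;=\; \clH^{n-1}(\partial_\ast A^c \cap (B^c)_\ast^o) + \clH^{n-1}(\partial_\ast B^c \cap (A^c)_\ast^o),
\]
and I can rewrite the right side using $\partial_\ast A^c = \partial_\ast A$, $\partial_\ast B^c = \partial_\ast B$ (part (6)) and $(B^c)_\ast^o = B_\ast^i$, $(A^c)_\ast^o = A_\ast^i$ (part (7)) to obtain the claimed formula.

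Since every step is a direct application of a previously stated identity or a set-theoretic manipulation, there is no real obstacle here; the only mildly delicate point is the disjointness check in item (1), which must be made explicit to pass from a measure of a union to a sum of measures. The rest is bookkeeping with complements via Lemma~\ref{lem: lemma_vixie}(6), (7).
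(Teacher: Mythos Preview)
Your proposal is correct and follows essentially the same route as the paper's proof: item (1) comes from sandwiching $\clH^{n-1}(\partial_\ast(A\cup B))$ between the measures coming from Lemma~\ref{lem: lemma_vixie}(4)--(5) and using the hypothesis to collapse the gap, and item (2) is obtained from item (1) via De~Morgan together with Lemma~\ref{lem: lemma_vixie}(6)--(7). The only difference is that you spell out the disjointness of $\partial_\ast A\cap B_\ast^o$ and $\partial_\ast B\cap A_\ast^o$ (via Lemma~\ref{lem: vixie-1}(2)) before passing from the measure of the union to the sum, whereas the paper leaves this implicit.
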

\begin{proof}
{\bf 1.} Lemma~\ref{lem: lemma_vixie} (4--5) imply that 
\begin{align}
\clH^{n-1}(\partial_\ast A \cap B_\ast^o) &+ \clH^{n - 1}(\partial_\ast B \cap A_\ast^o)\\
&\leq \clH^{n-1}(\partial_\ast(A \cup B))\\
&\leq \clH^{n-1}(\partial_\ast A \cap B_\ast^o) + \clH^{n-1}(\partial_\ast B \cap A_\ast^o) + \clH^{n-1}(\partial_\ast A \cap \partial_\ast B)
\end{align}
and the conclusion follows. 

{\bf 2.} This follows from the previous conclusion {\bf 1.}, Lemma~\ref{lem: lemma_vixie} (6--7), and the fact that $A \cap B = (A^c \cup B^c)^c$. 
\end{proof}

\begin{remark}
Since $\partial_\ast A = (\partial_\ast A \cap B_\ast^i) \cup (\partial_\ast A \cap \partial_\ast B) \cup (\partial_\ast A \cap B_\ast^o)$, the assumption that $\clH^{n-1}(\partial_\ast A \cap \partial_\ast B) = 0$ means we can, for the same measurement, consider $\partial_\ast A = (\partial_\ast A \cap B_\ast^i)\cup (\partial_\ast A\cap B_\ast^o)$. 
\end{remark}

\begin{remark}
Now suppose that $\clH^{n-1}(\partial_\ast A) < \infty$ and $\bfU_r$ is the ball of radius $r$ centered at $x \in \bbR^n$ (we suppress the $x$). 
Note that $\partial_\ast \bfU_r = \partial \bfU_r$. 
By the coarea formula, the set of $r$'s such that $\clH^{n-1}(\partial_\ast \bfU_r \cap \partial_\ast A) > 0$ is at most countable. 
We conclude that the $r$'s for which $\clH^{n-1}(\partial_\ast \bfU_r \cap \partial_\ast A) = 0$ are dense and have full measure in $\bbR$. 
For the rest of this section we assume that we have chosen $r$ such that $\clH^{n-1}(\partial_\ast A \cap \partial_\ast \bfU_r) = 0$. 
\end{remark}

\begin{proposition}\label{prop: flat norm}
Suppose $\bfU_r \subset E \subset \bbR^n$. 
Then 
\[
\Delta E := E(\Sigma \cup \bfU_r) - E(\Sigma) = -\clH^{n-1}(\partial_\ast \Sigma \cap (\bfU_r)_\ast^i) + \clH^{n-1}(\partial_\ast \bfU_r \cap \Sigma_\ast^o) - \lambda|\bfU_r\setminus \Sigma|. 
\]
\end{proposition}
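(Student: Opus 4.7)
The plan is to split $\Delta E$ into its perimeter contribution and its fidelity (volume) contribution, and then dispatch each separately using the measure-theoretic boundary machinery already assembled in Lemmas~\ref{lem: vixie-1}--\ref{lem: lemma_vixie} and Corollary~\ref{cor: corollary}. Throughout I would work under the hypothesis, prepared in the remark preceding the proposition, that $\clH^{n-1}(\partial_\ast \Sigma \cap \partial_\ast \bfU_r) = 0$, so that Corollary~\ref{cor: corollary}(1) applies as an equality.

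For the perimeter part, I would write
\[
\Per(\Sigma \cup \bfU_r) - \Per(\Sigma) = \clH^{n-1}(\partial_\ast(\Sigma \cup \bfU_r)) - \clH^{n-1}(\partial_\ast \Sigma)
\]
by Lemma~\ref{lem: vixie-1}(1). Corollary~\ref{cor: corollary}(1) rewrites the first term as $\clH^{n-1}(\partial_\ast \Sigma \cap (\bfU_r)_\ast^o) + \clH^{n-1}(\partial_\ast \bfU_r \cap \Sigma_\ast^o)$. For the second term, I would invoke the decomposition $\partial_\ast \Sigma = (\partial_\ast \Sigma \cap (\bfU_r)_\ast^i) \cup (\partial_\ast \Sigma \cap (\bfU_r)_\ast^o)$ up to an $\clH^{n-1}$-null set (the remark after Corollary~\ref{cor: corollary} says exactly this, using Lemma~\ref{lem: vixie-1}(2) and the null intersection hypothesis) to get $\clH^{n-1}(\partial_\ast \Sigma) = \clH^{n-1}(\partial_\ast \Sigma \cap (\bfU_r)_\ast^i) + \clH^{n-1}(\partial_\ast \Sigma \cap (\bfU_r)_\ast^o)$. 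Subtracting, the shared $(\bfU_r)_\ast^o$ piece cancels and I am left with $-\clH^{n-1}(\partial_\ast \Sigma \cap (\bfU_r)_\ast^i) + \clH^{n-1}(\partial_\ast \bfU_r \cap \Sigma_\ast^o)$, which is exactly the first two terms of the claim.

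For the fidelity part, I need $\lambda\bigl(|(\Sigma \cup \bfU_r) \Delta E| - |\Sigma \Delta E|\bigr)$, and here the set-theoretic hypothesis $\bfU_r \subset E$ does all the work. Decomposing $\Sigma \Delta E = (\Sigma \setminus E) \cup (E \setminus \Sigma)$ and similarly for $\Sigma \cup \bfU_r$, the containment $\bfU_r \subset E$ gives $(\Sigma \cup \bfU_r) \setminus E = \Sigma \setminus E$, so the ``extra'' part on that side cancels. On the other side, $E \setminus (\Sigma \cup \bfU_r) = (E \setminus \Sigma) \setminus \bfU_r$, and since $\bfU_r \setminus \Sigma \subset E \setminus \Sigma$ (again because $\bfU_r \subset E$) this has Lebesgue measure $|E \setminus \Sigma| - |\bfU_r \setminus \Sigma|$. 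Hence the fidelity contribution is exactly $-\lambda|\bfU_r \setminus \Sigma|$.

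Adding the two contributions yields the stated identity. The only delicate point is ensuring the measure-theoretic decompositions of $\partial_\ast(\Sigma \cup \bfU_r)$ and $\partial_\ast \Sigma$ are exact rather than just containments; this is the reason the proposition is implicitly restricted to the full-measure set of radii $r$ with $\clH^{n-1}(\partial_\ast \Sigma \cap \partial_\ast \bfU_r) = 0$ set up in the preceding remark, and once that is granted every application of Corollary~\ref{cor: corollary} is an equality. The volume bookkeeping is then entirely elementary.
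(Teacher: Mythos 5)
Your proposal is correct and follows essentially the same route as the paper: split $\Delta E$ into the perimeter change, handled via Corollary~\ref{cor: corollary}(1) and the decomposition $\partial_\ast \Sigma = (\partial_\ast \Sigma \cap (\bfU_r)_\ast^i) \cup (\partial_\ast \Sigma \cap (\bfU_r)_\ast^o)$ under the null-intersection choice of $r$, plus the fidelity change, which reduces to $-\lambda|\bfU_r \setminus \Sigma|$ because $\bfU_r$ is contained in the reference set. Your write-up of the volume bookkeeping is in fact more careful than the paper's one-line remark (which as printed drops a minus sign), but the argument is the same.
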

\begin{proof}
Since $\Per(\Sigma\cup \bfU_r) = \clH^{n-1}(\partial_\ast \Sigma \cap (\bfU_r)_\ast^o) + \clH^{n-1}(\partial_\ast \bfU_r \cap \Sigma_\ast^o)$ and $\Per(\Sigma) = \clH^{n-1}(\partial_\ast \Sigma \cap (\bfU_r)_\ast^i) + \clH^{n-1}(\partial_\ast\Sigma \cap (\bfU_r)_\ast^o)$ we get $\Per(\Sigma \cup (\bfU_r)_\ast^i) - \Per(\Sigma) = -\clH^{n-1}(\partial_\ast \Sigma \cap (\bfU_r)_\ast^i) + \clH^{n-1}(\partial_\ast \bfU_r \cap \Sigma_\ast^o)$. 
Noting that $\bfU_r \subset \Omega$ implies $|(\Sigma \cup \bfU_r)\Delta \Omega| - |\Sigma \Delta \Omega| = |\bfU_r \setminus \Sigma|$ finishes the proof. 
\end{proof}

The next theorem will be used in the proof of Theorem~\ref{thm:fnreach}. 
It says the following. 
Whenever $\Sigma_\lambda$ is a flat norm minimizer with scale $\lambda > 0$ of $\partial(\bfE^n\myell E)$, and a ball of radius $2/\lambda + \epsilon$ is contained in $E$, then the concentric ball of radius $2/\lambda - \epsilon$ is completely contained in the flat norm minimizer $\Sigma_\lambda$. 

\begin{theorem}\label{thm:vixie}
Let $R := 2/\lambda$. 
Given $\hat{r} \in (R, \frac{\sqrt{7}}{2}R)$ and $\epsilon \in (0, 1 - \frac{1}{\sqrt{2}})$, we can choose $\delta = \delta(R, \hat{r}, \epsilon) > 0$ such that 
\[
|B_{\hat{r}}\setminus \Omega| < \delta \Rightarrow B_{(1 - \epsilon) R} \subset \Sigma.
\]
\end{theorem}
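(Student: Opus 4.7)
The plan is to argue by contradiction. Suppose there exists $y \in B_{(1-\epsilon)R}\setminus\Sigma$; the strategy is to exhibit a competitor $\Sigma^\ast = \Sigma\cup T$ with $E(\Sigma^\ast) < E(\Sigma)$, contradicting the minimality of $\Sigma$. The challenge, relative to Proposition~\ref{prop: flat norm}, is that we no longer have $T\subset\Omega$.

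First I will extend Proposition~\ref{prop: flat norm} to allow the test region $T\subset B_{\hat r}$ to lie partly outside $\Omega$. A direct bookkeeping of the symmetric-difference change yields the identity
\[
\lambda\bigl(|\Sigma^\ast\Delta\Omega|-|\Sigma\Delta\Omega|\bigr) \;=\; -\lambda|T\setminus\Sigma| + 2\lambda|T\setminus(\Sigma\cup\Omega)|,
\]
where the correction is controlled by $2\lambda|B_{\hat r}\setminus\Omega|<2\lambda\delta$. Combined with the perimeter contributions from Proposition~\ref{prop: flat norm}, this gives
\[
\Delta E \;\leq\; -\clH^1(\partial_\ast\Sigma\cap T^\circ) + \clH^1(\partial T\cap \Sigma_\ast^o) - \lambda|T\setminus\Sigma| + 2\lambda\delta.
\]
For a ball $T=\bfU_r(x_0)\subset\Sigma^c$, this collapses to $\Delta E\leq 2\pi r-\lambda\pi r^2+2\lambda\delta=-\pi r(\lambda r-2)+2\lambda\delta$, which is strictly negative whenever $r>R$ and $\delta<\pi r(\lambda r-2)/(2\lambda)$.

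The crux is to select $T$ so that, despite the constraint $T\subset B_{\hat r}$, the computation produces $\Delta E<0$. A centered ball $\bfU_r(y)$ admits radius up to $\hat r-|y|$, which exceeds $R$ only when $|y|<\hat r-R$. An off-center ball with $x_0$ placed on the ray opposite $y$ through the origin attains radius up to $(\hat r+|y|)/2$, exceeding $R$ only when $|y|>2R-\hat r$. Because $\hat r<\sqrt{7}/2\,R<3R/2$, these two elementary constructions leave an uncovered gap $|y|\in[\hat r-R,\,2R-\hat r]$ intersecting $B_{(1-\epsilon)R}$. For $y$ in this gap one must invoke the positive reach $\reach(\partial\Sigma)\geq \hat C/\lambda$ from Theorem~\ref{thm: flatNormReach} to guarantee that $\Sigma^c$ near $y$ carries an interior ball of definite size, and retain the negative boundary term $-\clH^1(\partial_\ast\Sigma\cap T^\circ)$ in the estimate to compensate for the mass deficit when the test ball cannot be taken entirely inside $\Sigma^c$. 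The numerical thresholds $\hat r<\sqrt{7}/2\,R$ and $\epsilon<1-1/\sqrt{2}$ arise as the exact ones for which the three constructions (centered, off-center, and reach-based) interlock to cover all of $B_{(1-\epsilon)R}$.

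The main obstacle is precisely this gap regime: producing a test set of effective radius strictly exceeding $R$ contained in $B_{\hat r}$, by balancing the geometric room available inside $B_{\hat r}$ against the fact that $\Sigma$ occupies part of any such ball. Once this construction yields a uniform $r_0>R$ valid across $y\in B_{(1-\epsilon)R}$, choosing $\delta=\delta(R,\hat r,\epsilon)<\pi r_0(\lambda r_0-2)/(2\lambda)$ ensures $\Delta E<0$, contradicting the minimality of $\Sigma$ and forcing $B_{(1-\epsilon)R}\subset\Sigma$.
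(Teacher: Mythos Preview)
Your proposal has a genuine gap: the central construction cannot be carried out. You argue by contradiction from a single point $y\in B_{(1-\epsilon)R}\setminus\Sigma$ and then try to locate a test ball $T\subset B_{\hat r}$ of radius exceeding $R$ that lies (mostly) in $\Sigma^c$. But knowing only that the one point $y$ is outside $\Sigma$ gives no control whatsoever over $|T\cap\Sigma|$ for any ball $T$; your ``centered'' and ``off-center'' cases already fail for this reason, not just the gap regime. The reach bound you invoke from Theorem~\ref{thm: flatNormReach} yields interior balls of radius only $\hat C/\lambda\approx 0.22/\lambda$, an order of magnitude smaller than $R=2/\lambda$, so it cannot supply the missing mass. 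And the boundary term $-\clH^1(\partial_\ast\Sigma\cap T^\circ)$ you hope will compensate is not estimated at all; it could be zero. Finally, your explanation of the thresholds is incorrect: $\hat r<\tfrac{\sqrt7}{2}R$ is exactly the condition $|B_{\hat r}|-|B_R|<\tfrac34\pi R^2$, and $\epsilon<1-\tfrac{1}{\sqrt2}$ is the condition $r>R/\sqrt2$ needed for the relative isoperimetric inequality---neither arises from any ``interlocking'' of ball constructions.

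The paper's argument is essentially orthogonal to yours. It never looks for a ball in $\Sigma^c$; instead it works with \emph{concentric} balls $B_r$ (same center as $B_{\hat r}$) and exploits minimality of $\Sigma$ to force $|B_r\cap\Sigma|$ to be \emph{large}. Combining $\Delta E\ge 0$ with the isoperimetric inequality gives a quadratic in $|B_r\cap\Sigma|^{1/2}$; evaluating first at $r=\hat r$ and then at $r=R$ (this is where $\sqrt7/2$ enters) yields $|B_R\setminus\Sigma|\le 6\delta$. The proof then sets $v(r)=|B_r\setminus\Sigma|$, derives a differential inequality $v'(r)-C\sqrt{v(r)}+\lambda v(r)\ge 0$ from the relative isoperimetric inequality (valid for $r>R/\sqrt2$, whence the $1-1/\sqrt2$), and integrates it to show $v(r)=0$ for some $r\in((1-\epsilon)R,R)$. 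The ODE step is the missing idea in your proposal.
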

\begin{proof}
In the case that $B_r \cap \Omega^c \neq \emptyset$, 
\begin{align}
\nabla E &= -\clH^1(\partial_\ast \Sigma \cap B_{r^\ast}^i + \clH^1(\partial_\ast B_r \cap \Sigma_\ast^o)) - \lambda|B_r \setminus \Sigma| + 2\lambda|B_r \cap \Omega^c \cap \Sigma^c|\\
&\leq -\clH^1(\partial_\ast \Sigma \cap B_{r_\ast}^i) + \clH^1(\partial_\ast B_r \cap \Sigma_\ast^o) - \lambda|B_r \setminus \Sigma | + 2\lambda|B_r \cap \Omega^c|\\
&= -\clH^1(\partial_\ast \Sigma \cap B_{r_\ast}^i) + \clH^1(\partial_\ast B_r \cap \Sigma_\ast^o) - \lambda|B_r \setminus \Sigma| + 2\lambda|B_r \setminus \Omega|\\
&= (\clH^1(\partial_\ast B_r) - \lambda|B_r|) + (\lambda|B_r \cap \Sigma| - \clH^1(\partial_\ast(B_r \cap \Sigma))) + 2\lambda|B_r\setminus\Omega|\\
&= (\mathrm{Per}(B_r) - \lambda|B_r|) + (\lambda|B_r \cap \Sigma| - \mathrm{Per}(B_r \cap \Sigma)) + 2\lambda|B_r \setminus \Omega|.\label{eq:22}
\end{align}

{\bf Claim 1:} $|B_r \setminus \Sigma| \leq 6\delta$. 

{\it Proof of Claim 1:} Since we assume $\Sigma$ is a minimizer, $\nabla E \geq 0$. 
We will perturb with balls of radius $r \leq \wedge{r}$. 
Then, $|B_r\setminus \Omega| \leq |B_{\wedge{r}} \setminus \Omega | := \delta$. 
These assumptions together with (\ref{eq:22}) and the isoperimetric inequality $(\mathrm{Per}(B_r\cap \Sigma) \geq 2\sqrt{\pi} |B_r \cap \Sigma|^{1/2})$ imply: 
\begin{align*}
0 &\leq \nabla E \leq 2\pi r - \lambda \pi r^2 + \lambda |B_r \cap \Sigma| - 2 \sqrt{\pi} |B_r \cap \Sigma|^{1/2} + 2\lambda\delta\\
&= \lambda |B_r \cap \Sigma| - 2 \sqrt{\pi} |B_r \cap \Sigma|^{1/2} + \left(2\lambda\delta +  2\pi r - \lambda \pi r^2\right)\\
&= f(\xi) := \lambda \xi^2 - 2\sqrt{\pi}\xi + \left(2\lambda\delta + 2 \pi r - \lambda \pi r^2\right), (\xi := |B_r \cap \Sigma|^{1/2})\\
&= \frac{2}{R} \xi^2 - 2 \sqrt{\pi}\xi + \left(\frac{4\delta}{R} + 2\pi r - \frac{2\pi r^2}{R}\right) (\mathrm{recalling\ } R = 2/\lambda).
\end{align*}
In view of the last inequality, we describe values of $\xi$ for which $f(\xi) \geq 0$. 
For a given $r$, the zeros of $f(\xi)$ are at: 
\begin{equation}\label{eq:2}
\xi_{\pm}(r) = \sqrt{\frac{\pi R^2}{4}} \pm \sqrt{\frac{\pi R^2}{4} + (\pi r(r - R)) - 2\delta}.
\end{equation}
Thus, for all $r \leq \wedge{r}$, we have that either 
\[
|B_r\cap \Sigma|^{\frac{1}{2}} \leq \xi_-(r), \quad \text{ or } \quad |B_r \cap \Sigma|^{\frac{1}{2}} \geq \xi_+(r).
\]
If we take $r = \wedge{r} > R$, then $2\pi r(r - R) > 0$, and assuming 
{\bf Condition 1.} 
\[
\delta < \frac{\pi \wedge{r}}{2}(\wedge{r} - R)
\]
implies $\xi_-(r) < 0$. 
This implies 
\[
|B_{\wedge{r}} \cap \Sigma| \geq \xi_+^2(\wedge{r}) > \pi R^2.
\]
Since $r = \wedge{r} < \frac{\sqrt{7}}{2} R$ we get 
\begin{equation}\label{eq:31}
|B_R \cap \Sigma | > |B_r \cap \Sigma| - \frac{3\pi R^2}{4} > \frac{\pi R^2}{4}.
\end{equation}
Now we consider $\xi_{\pm}(R)$: 
\begin{equation}\label{eq:32}
\xi_{\pm}(R) = \sqrt{\pi R^2}{4} \pm \sqrt{\frac{\pi R^2}{4} - 2\delta}.
\end{equation}
Assuming 
{\bf Condition 2.} 
\[
\delta < \frac{\pi R^2}{8},
\]
we get that $\xi_{\pm}(R)$ are real and distinct. 
Since 
\[
\xi_-^2(R) < \frac{\pi R^2}{4} < |B_r \cap \Sigma|,
\]
we conclude that 
\[
\xi_+^2 \leq |B_r \cap \Sigma|.
\]
Computing we get 
\begin{align}
\xi_+^2 &= \left(\sqrt{\frac{\pi R^2}{4}} + \sqrt{\frac{\pi R^2}{4} - 2\delta}\right)^2\label{eq:36}\\
&= \frac{\pi R^2}{2} - 2\delta + \frac{\pi R^2}{2}\sqrt{1 - \frac{8\delta}{\pi R^2}}\label{eq:37}\\
&\geq \frac{\pi R^2}{2} - 2\delta + \frac{\pi R^2}{2}\left(1 - \alpha \frac{8\delta}{\pi R^2}\right) \left(\mathrm{assuming\ } \delta \leq \frac{\pi R^2}{8} \frac{2\alpha - 1}{\alpha^2}\right)\label{assumption 38}\\
&= \pi R^2 - (2 + 4\alpha)\delta.\label{eq:39}
\end{align}
Choosing $\alpha = 1$, and noting that Condition $2$ then implies the assumption in~(\ref{assumption 38}) is satisfied, we get 
\[
|B_R \cap \Sigma| \geq \xi_+^2 \geq \pi R^2 - 6\delta.
\]
This gives 
\[
|B_r \setminus \Sigma| \leq 6\delta
\]
as advertised.
This concludes the proof of \emph{Claim 1.}

\begin{remark}
What if either $\hat{r}$ or $R$ are radii such that (\ref{eq:22}) (and therefore (\ref{eq:2})) does not hold?
We can simply choose another $\tilde{r} < \hat{r}$ arbitrarily close to $\hat{r}$, for which (\ref{eq:22}) does hold. 
The $\tilde{\delta} := |B_r\setminus \Omega|$ will be no greater than, and arbitrarily close to, $\delta$. 
As we will see, the only conditions on $\delta$ that are not functions of $R$ and $\epsilon$ are those in Condition 1. 
Therefore, if we replace Condition 1 with 
\[
\delta < \frac{\pi R}{4} (\hat{r} - R)
\]
we know that the delta chosen for any $\hat{r}$ will permit us to arrive at the conclusions of this lemma, even in cases where we have to perturb $\hat{r}$. 
Next we choose a sequence of $r_i > R$ converging monotonically to $R$ for which the inequality does work. 
Inequalities in (\ref{eq:31}) is still valid if we replace $B_R$ with $B_{r_i}$. 
Equation (\ref{eq:32}) can be slightly modified using (\ref{eq:2}) to 
\begin{equation}\label{eq:43}
\xi_{\pm}(r_i) = \sqrt{\frac{\pi R^2}{4}} \pm \sqrt{\frac{\pi R^2}{4} - 2\delta_i}
\end{equation}
where the $\delta_i < \delta$ and $\delta_i \to \delta$ as $i \to \infty$. 
Now, simply repeating the derivation in lines (\ref{eq:36}) to (\ref{eq:39}), gives 
\begin{equation}\label{eq:44}
|B_R \cap \Sigma| = \lim_{i\to\infty} |B_{r_i}\cap \Sigma| \geq \pi R^2 - 6\delta = \lim_{i\to\infty} \pi R^2 - 6\delta_i.
\end{equation}
\end{remark}
Now we continue with the proof. 
Computing (again and less optimally, but sufficiently for our purposes) the change in energy when we add a ball $B_r$ to $\Sigma$ for $r \in (0, R)$, we get 
\begin{align}
\Delta E = E(\Sigma \cup B_r) &- E(\Sigma) \label{eq:47} \\
&\leq -\clH^1(\partial_\ast \Sigma \cap B_{r_\ast}^i) + \clH^1(\partial_\ast B_r \cap \Sigma_\ast^o) + \lambda|B_r \setminus \Sigma|\\
&= -\mathrm{Per}(\Sigma; B_r) + \clH^1(\partial_\ast B_r \cap \Sigma_\ast^o) + \lambda|B_r\setminus \Sigma|.
\end{align}
By the coarea formula and properties of the measure theoretic exterior, 
\begin{equation}\label{eq:48}
|B_r \setminus \Sigma| = |B_r \cap \Sigma_\ast^o| = \int_0^r \clH^1(\partial_\ast B_\xi \cap \Sigma_\ast^o)\,d\xi.
\end{equation}
By the relative isoperimetric inequality applied in the ball $B_r(x_0)$, 
\begin{equation}\label{eq:49}
\mathrm{Per}(\Sigma; B_r) \geq C \min\left\{\|B_r \setminus \Sigma|^{1/2}, |\Sigma \cap B_r|^{1/2}\right\}.
\end{equation}
Assuming 

{\bf Condition 3.} $6\delta < \frac{1}{4} \pi R^2$

implies $|B_R\setminus \Sigma| < \frac{1}{4} \pi R^2$. 
Assuming $r > \frac{R}{\sqrt{2}}$ implies that $|B_r\setminus \Sigma| \leq |\Sigma \cap B_r|$ and consequently 
\begin{equation}\label{eq:50}
\mathrm{Per}(\Sigma; B_r) \geq C|B_r\setminus \Sigma|^{1/2}.
\end{equation}
This gives a condition on $\epsilon$:

{\bf Condition 4.} $\epsilon < 1 - \frac{1}{\sqrt{2}}$.

Define $v(r) := |B_r \setminus \Sigma|$. 
By differentiating (\ref{eq:48}) with respect to $r$, and using (\ref{eq:50}) we see that the inequality concerning the change in energy given in (\ref{eq:47}) can be written as 
\begin{equation}
\label{eq:51}
E(\Sigma\cup B_r) - E(\Sigma) \leq \lambda v(r) - C\sqrt{v(r)} + v'(r).
\end{equation}
We will use the differential expression on the right to show that the change in energy on the left has to be negative for some $r$ close to $R$. 
\begin{remark}
Note that by choosing $\delta$ small enough, we can make $v(r)$ arbitrarily small and obtain $\lambda v(r) - C\sqrt{v(r)} < 0$; if the right hand side is positive then we have $v'(r) > 0$. 
This in turn means that $v(r)$ decreases as $r$ gets smaller. 
We exploit this to force the right hand side to zero. 
\end{remark}

{\bf Lemma.} $v'(r) - C\sqrt{v(r)} + \lambda v(r) \leq 0$ for a set of $r \in ((1 - \epsilon)R, R)$ with positive measure. 
{\it proof of lemma.} Assume 
\begin{equation}\label{eq:52}
v'(r) - C\sqrt{v(r)} + \lambda v(r) \geq 0 \text{ for a.e. } r \in ((1 - \epsilon)R, R),
\end{equation}
otherwise we are done. 
Let $w(s) := e^{-\lambda s} v(R - s).$
Then (\ref{eq:52}) turns into 
\begin{equation}
\label{eq:53}
w'(s) + Ce^{\frac{-\lambda s}{2}}\sqrt{w(s)} \leq 0 \text{ for a.e. } s \in (0, \epsilon R)
\end{equation}
with the initial condition $w(0) = |B_r\setminus \Sigma|$ and $w(s) \geq 0$. 
Solutions of this differential inequality can be bounded from above by solutions of the following differential equality:
\begin{equation}\label{eq:54}
\bar{w}' = -Ce^{-\lambda s}{2}\sqrt{\bar{w}}, \quad 
\bar{w}(0) = |B_R\setminus \Sigma|,\quad  \bar{w} \geq 0.
\end{equation}
The solution is 
\begin{align*}
\sqrt{\bar{w}(s)} &= \max\left(0, \frac{C}{\lambda}\left(e^{-\frac{\lambda}{2}s} - 1\right) + \sqrt{|B_R\setminus \Sigma|}\right)\\
&= \max\left(0, \frac{CR}{2}\left(e^{-\frac{s}{R}} - 1\right) + \sqrt{|B_R\setminus \Sigma|}\right).
\end{align*}
Therefore if $|B_R\setminus \Sigma| \leq 6\delta$ and 

{\bf Condition 5.} {\it $6\delta \leq \alpha$, where $\alpha$ is any solution to 
\begin{equation}
\frac{CR}{2}\left(e^{-\frac{\epsilon R}{R}} - 1\right) + \sqrt{\alpha} = \frac{CR}{2}\left(e^{-\epsilon} - 1\right) + \sqrt{\alpha} < 0
\end{equation}
i.e., we have $\delta < \frac{C^2R^2}{24}(1 - e^{-\epsilon})^2$,
} 
then we have a set of $r$ with positive measure in $((1 - \epsilon) R, R)$ such that $v(r) = 0$ and $v'(r) = 0$. 
Thus concluding the lemma. 
This lemma immediately implies that for some $r \in ((1 - \epsilon)R, R)$, $B_r \cup \Sigma$ is also a minimizer.
\end{proof}

\begin{theorem}\label{thm:fnreach}
  Let $E = \bfE^2\myell (U \setminus A)$ where $U$ is a bounded open subset of $\bbR^2$ with $\mathrm{reach}(\partial U) > \rho > 0$, and let $W$ be an open, compactly supported subset of $U$.
  If 
  \begin{align}\label{cond:1}
    0 < 2/\lambda < \rho,
  \end{align}
  then 
  \begin{align}\label{conc:thm}
    \exists \delta > 0\ \text{ such that if } A \subset\subset W \text{ with } |A| < \delta,\ \text{ then } \partial (\bfE^2 \myell U) \in \mathrm{arg\,}\srF_\lambda(\partial E).
  \end{align}
\end{theorem}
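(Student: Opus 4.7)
The plan is to reduce, via Equation~\eqref{eq:fn2} of Definition~\ref{def:MFN}, to showing that the set $\Sigma^{\ast} = U$ minimizes $\Per(\Sigma) + \lambda\,|\Sigma \Delta (U\setminus A)|$ over sets $\Sigma \subset \bbR^2$ of finite perimeter; the value attained at $\Sigma^{\ast} = U$ is $\Per(U) + \lambda|A|$. Because $2/\lambda < \rho$, I fix auxiliary parameters $\hat r \in (2/\lambda,\,\rho)$ and $\epsilon \in (0,\,1 - 1/\sqrt{2})$, set $R := 2/\lambda$, and let $\delta_0 = \delta(R,\hat r,\epsilon)$ be the threshold supplied by Theorem~\ref{thm:vixie}. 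The $\delta$ of the present theorem will be chosen small enough that $|A| < \delta$ forces both $|A| < \delta_0$ and that the shell width $\hat r - (1-\epsilon)R$ introduced below is strictly less than $\dist(\overline{W},\partial U) > 0$, which is positive because $W \subset\subset U$.

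Let $\Sigma_\lambda$ be an arbitrary minimizer. Since $\reach(\partial U) > \rho > \hat r$, the erosion $U \ominus \bfB(0,\hat r)$ is nonempty, and for every $y$ in it the ball $\bfB(y,\hat r) \subset U$ satisfies $|\bfB(y,\hat r) \setminus (U\setminus A)| = |\bfB(y,\hat r) \cap A| \leq |A| < \delta_0$, so Theorem~\ref{thm:vixie} gives $\bfB(y,(1-\epsilon)R) \subset \Sigma_\lambda$. The energy is invariant under joint complementation of data and candidate (since $\Per(\Sigma) = \Per(\Sigma^c)$ and $|\Sigma \Delta \Omega| = |\Sigma^c \Delta \Omega^c|$), so applying Theorem~\ref{thm:vixie} to the dual problem with data $(U\setminus A)^c$ and minimizer $\Sigma_\lambda^c$ yields, at every $y$ with $\bfB(y,\hat r) \subset U^c$, the exclusion $\bfB(y,(1-\epsilon)R) \cap \Sigma_\lambda = \emptyset$. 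Rolling these two families of balls along $\partial U$ on each side---permissible because $\reach(\partial U) > \hat r$---pins $\Sigma_\lambda$ down off the shell $S_{\mathrm{sh}} := \{x \in \bbR^2 : \dist(x,\partial U) < \hat r - (1-\epsilon)R\}$: we have $U \setminus S_{\mathrm{sh}} \subset \Sigma_\lambda$ and $\Sigma_\lambda \cap (U^c \setminus S_{\mathrm{sh}}) = \emptyset$, and in particular $\partial \Sigma_\lambda \subset S_{\mathrm{sh}}$.

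The remaining task is to upgrade these containments to $\Sigma_\lambda = U$ within $S_{\mathrm{sh}}$ up to a Lebesgue-null set; this is where the main obstacle lies. Two regularity tools are available: Theorem~\ref{thm: flatNormReach} gives $\reach(\partial\Sigma_\lambda) \geq \hat C/\lambda$, so $\partial\Sigma_\lambda$ is a $C^{1,1}$ curve, while the hypothesis $\reach(\partial U) > 2/\lambda$ forces the curvature of $\partial U$ to stay strictly below $\lambda/2$. I would run a local comparison in the spirit of the osculating-disk argument used in Theorem~\ref{thm:one-circle}: at a putative point where $\partial\Sigma_\lambda$ deviates from $\partial U$, the first-variation condition for the minimizer forces $\partial\Sigma_\lambda$ locally to bend along a disk of radius $1/\lambda$; because $\partial A$ is kept at distance at least $\dist(\overline{W},\partial U)$ from $\partial U$, the choice of $\delta$ ensures $\partial A$ never enters $S_{\mathrm{sh}}$, so the only curve inside the shell that $\partial \Sigma_\lambda$ could track is $\partial U$ itself. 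Requiring the deviation disk of radius $1/\lambda$ to fit inside $S_{\mathrm{sh}}$ without crossing any of the rolling balls of radius $\hat r$ from the preceding paragraph yields a contradiction once the shell width is small enough, forcing $\partial\Sigma_\lambda = \partial U$ and hence $\Sigma_\lambda = U$ up to measure zero. The core difficulty, and the principal quantitative step of the argument, is making the dependence of $\delta$ on $\hat r$, $\rho$, $\hat C$, and $\dist(\overline{W},\partial U)$ explicit enough to extract this contradiction cleanly.
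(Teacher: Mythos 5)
Your first half tracks the paper's: both arguments set $R = 2/\lambda$, pick $\hat r$ between $R$ and $\rho$, and feed Theorem~\ref{thm:vixie} balls $\bfB(\cdot,\hat r)$ contained in $U$ so that the concentric $(1-\epsilon)R$-balls land inside $\Sigma_\lambda$; the reach of $\partial U$ is what lets you slide the centers toward the interior, which the paper does pointwise for each $a \in A$ by setting $x_a = \xi(a) + \hat r\,(a - \xi(a))/|a - \xi(a)|$ and invoking Federer's Theorem 4.8(12) --- your ``rolling'' is the same device. (One small repair: Theorem~\ref{thm:vixie} requires $\hat r < \tfrac{\sqrt{7}}{2}R$ as well as $\hat r < \rho$, and the paper additionally caps $\hat r$ by $\dist(W,\partial U) + R$; your choice $\hat r \in (2/\lambda, \rho)$ can violate the first bound when $\rho$ is much larger than $R$.)

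The genuine gap is the endgame. The containments $U \setminus S_{\mathrm{sh}} \subset \Sigma_\lambda$ and $\Sigma_\lambda \cap (U^c \setminus S_{\mathrm{sh}}) = \emptyset$ leave $\Sigma_\lambda$ entirely undetermined inside the shell: $U$ minus a thin sliver hugging $\partial U$, or $U$ plus a shallow bump, satisfies both, and the comparison argument you sketch to exclude such competitors is precisely the step you concede you cannot make quantitative. That step \emph{is} the theorem. The paper does not close it with a shell-width estimate at all; it quotes Allard's regularity theorem \cite{allard-2006-1}, which says the support of the minimizing current differs from $\partial E$ only along arcs of circles of radius exactly $1/\lambda$ that meet $\partial E$ tangentially and subtend angle at most $\pi$. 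Once $A \subset \Sigma_\lambda$ is established, any such arc must begin and end tangent to $\partial U$, and a circle of radius $1/\lambda < \rho < \reach(\partial U)$ tangent to $\partial U$ at two points is impossible; hence the minimizer is $\partial(\bfE^2 \myell U)$, with no dependence of $\delta$ on $\hat C$ or on the shell width ever entering. Without importing that (or an equivalent) regularity statement for flat norm minimizers, your argument does not reach the conclusion.
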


\begin{proof}
  Let $R = 2/\lambda$. 
  By a result of Allard~\cite{allard-2006-1}, we know that the support of a minimizer $T \in \mathrm{arg\,} \srF_\lambda(\partial E)$ differs from $\partial E$ only at arcs of circles of radius $1/\lambda$, which subtend angles no more than $\pi$, and meet $\partial E$ tangentially. 
  Thus, if we first show that 
  \begin{align}
    A \subset \Sigma_\lambda,
  \end{align}
  the only arc of a circle of radius $1/\lambda$ in $T$ that differs from $\partial E$ must start and end at two points of $\partial U$. 
  If this circle touches $T$ at two points, it contradicts the fact that $\reach(\partial U) > 1/\lambda$. 
  Therefore, $T$ must not differ from $\partial U$, i.e., $\partial(\bfE^2\myell U) \in \mathrm{arg\,}\srF_\lambda(\partial E)$.  

  We will prove that $A \subset \Sigma_\lambda$ by showing that for an $\hat{r} > 0$ (to be chosen later) there exists an $\epsilon > 0$ such that for all $a \in A$, we can find a center $x_a \in U$ such that
  \begin{align}\label{thm:fn:pf}
    \bfU(x_a, \hat{r}) \subset U\quad \text{ and }\quad a\in \bfU(x_a, (1-\epsilon)R).
  \end{align}
  By Theorem 2 of the manuscript by Vixie~\cite{Vi2007}, there exists a $\delta = \delta(\hat{r}, \epsilon, R) > 0$, such that 
  \begin{align}
    |\bfU(x_a, \hat{r})\setminus (U\setminus A)| < \delta \quad \text{ implies }\ \quad \bfU(x_a, (1-\epsilon)R) \subset \Sigma_\lambda.
  \end{align}
  Since $\hat{r}$ and $\epsilon$ are independent of $a$, satisfying the first condition of~(\ref{thm:fn:pf}), together with $|A| < \delta$ will be sufficient to apply Vixie's Theorem 2. 
  Thus, every point of $A$ will be contained in a ball that is contained in $\Sigma_\lambda$, and hence we get that $A \subset \Sigma_\lambda$.

  Let us now prove that $A \subset \Sigma_\lambda$.
  For any $R \in (0, \rho)$ we can pick an $\hat{r} > 0$ such that 
  \begin{align}\label{cond:rhat}
    R < \hat{r} < \min\left\{\rho, \,\dist(W, \partial U) + R, \,\frac{\sqrt{7}}{2}R \right\}.
  \end{align}
  We may then pick a small enough $\epsilon > 0$ such that 
  \begin{align}\label{cond:epsilon}
    \hat{r} - (1 - \epsilon)R \,< \,\dist(A, \partial U).
  \end{align}

  Let $a \in A$. 
  Either $\dist(a, \partial U) \geq \rho$, or not. 
  If $\dist(a, \partial U) \geq \rho$, then $\bfU(a, \hat{r}) \subset U$ and clearly $\bfU(a, (1-\epsilon)R)$ contains $a$. 

  If $\dist(a, \partial U) < \rho$, then since $\rho < \mathrm{reach}(\partial U)$ we may let $x_a = \xi(a) + \alpha(a - \xi(a))$ where $\alpha = \hat{r}/|a - \xi(a)|$.
  Since $|\xi(a) - x_a| = \hat{r} < \mathrm{reach}(\partial U)$, and $a - \xi(a) \in \Nor(\partial U, \xi(a))$, part (12) of Theorem 4.8.~in the work of Federer~\cite{federer-1959-1} implies that $\xi(x_a) = \xi(a)$ and hence, since $a \in \bfU(x_a, \hat{r})$, that $\bfU(x_a, \hat{r}) \subset U$. 

  To conclude the proof we must show that $a \in \bfU(x_a, (1 - \epsilon)R)$.
  By the fact that $|x_a - a| + |a - \xi(a)| = \hat{r}$, and that we have chosen $\hat{r}$ and $\epsilon$ to satisfy~(\ref{cond:rhat}) and~(\ref{cond:epsilon}), respectively, we get that 
  \begin{align*}
    |x_a - a| & < \dist(W, \partial U) + R - |a - \xi(a)|\\
              & \leq |a - \xi(a)| + (1 - \epsilon)R - |a - \xi(a)|\\
              & = (1 - \epsilon)R,
  \end{align*}
  thus concluding the proof of the theorem.
\end{proof}

\section{Discussion}

Our work opens up several computational and theoretical questions for further exploration.
Given the reliance of our results on good partitions, can we design an efficient algorithm to compute a finite partition $\{R_i\}$ of given closed bounded set $E$ with nonempty interior where each $R_i$ is a union of dyadic cubes of a certain diameter such that $\clL^n\myell E(R_i) \geq \epsilon$ and $\diam(R_i\cap E) \leq \delta$ for given $\epsilon, \delta > 0$?

The bound $\displaystyle P(B(\bbX, \delta) \supset E) \geq 1 - \sum_{R \in \clR} \exp(-\clL^n(R)N)$ in Equation (\ref{eq:probLB}) motivates the development of methods to find minimizers of
$\displaystyle \sum_{R \in \clR} \exp(-\clL^n(R)N)$
over all partitions $\clR$ of $E$ with $\diam(R) \leq \delta$ and $\clL^n(R) \geq \epsilon$ for all $R \in \clR$.
We also want to use dyadic grids of $\bbR^n$ to obtain our allowed partitions of $E$. 

Our results on $\alpha$-almost partitions of $E$ that allow samples to \emph{almost} cover $E$ with high probability lead to the natural question of computing estimates of $\clL^n(E\setminus E\circ B_r)$ as a function of $r$.
Relating the geometry of $E$ to the size of $\clL^n(E\setminus E \circ B_r)$ would give us conditions under which an $\alpha$-almost cover can be guaranteed to exist.

\input{main_v2.bbltex}

\end{document}